\documentclass[reqno]{amsart}
\usepackage{amsmath,  amssymb, amstext, mathrsfs, cite}
\usepackage{bigints,bm}
\usepackage{enumitem}
\makeatletter
\newcommand*\rel@kern[1]{\kern#1\dimexpr\macc@kerna}
\newcommand*\widebar[1]{%
\begingroup
\def\mathaccent##1##2{%
\rel@kern{0.8}%
\overline{\rel@kern{-0.8}\macc@nucleus\rel@kern{0.2}}%
\rel@kern{-0.2}%
}%
\macc@depth\@ne
\let\math@bgroup\@empty \let\math@egroup\macc@set@skewchar
\mathsurround\z@ \frozen@everymath{\mathgroup\macc@group\relax}%
\macc@set@skewchar\relax
\let\mathaccentV\macc@nested@a
\macc@nested@a\relax111{#1}%
\endgroup
}
\makeatother

\usepackage[left=3cm,right=3cm,top=2cm,bottom=2cm,includeheadfoot]{geometry}
\usepackage{hyperref,xcolor}% http://ctan.org/pkg/{hyperref,xcolor}
\hypersetup{
pdfborder={0 0 0},
colorlinks,
%linkcolor=black,
%citecolor=black
}

\usepackage{enumitem}
\setlength{\parindent}{1.2em}
\usepackage{amsfonts}
\usepackage{amsthm}

\usepackage{amsfonts}
\usepackage{amsthm}

%MATH OPERATORS

\DeclareMathOperator*{\essinf}{ess\,inf}
\DeclareMathOperator*{\loc}{loc}

              %
              %
              %            %
              %
              %
            %
          %
          %
         %        %
         %
         %

\DeclareMathOperator*{\spp}{supp}
\newcommand*\diff{\mathop{}\!\mathrm{d}}

% Braces and similar things:

\newcommand{\norm}[1]{|\!|#1|\!|}

\newcommand{\round}[1]{\left(#1\right)}

\newcommand{\scal}[1]{\left\langle#1\right\rangle}

% SETS-SPACES
\newcommand{\R}{{\mathbb R}}
\newcommand{\N}{{\mathbb N}}
\newcommand{\RN}{\mathbb{R}^N}
\newcommand{\Lp}[1]{L^{#1}(\Omega)}

\newcommand{\Wp}[1]{W^{1,#1}(\Omega)}

\newcommand{\close}{\overline{\Omega}}
%\newcommand{\interior}{\ints \left(C^1(\overline{\Omega})_+\right)}

%#########################################
%SYMBOLS

%\newcommand{\lan}{\langle}
%\newcommand{\ran}{\rangle}
\newcommand{\eps}{\varepsilon}

\newcommand{\into}{\int_{\Omega}}

% Hypotheses are marked this way:

\newcommand{\Assg}[1]{\textup{(g)}}

\newcommand{\calbf}[1]{\bm{\mathcal{#1}}}

\begin{document}
\title [Critical double phase problems in $\RN$]
{On critical double phase problems in $\mathbb{R}^N$ involving variable exponents}

\author[H.H.Ha]{Hoang Hai Ha}
\address{Hoang Hai Ha \newline
	Department of Mathematics, Faculty of Applied Science , Ho Chi Minh City University of Technology (HCMUT), 268 Ly Thuong Kiet Street, District 10, Ho Chi Minh City, Viet Nam \newline
	Vietnam National University Ho Chi Minh City, Linh Trung Ward, Ho Chi Minh City,  Viet Nam}
\email{hoanghaiha@hcmut.edu.vn }

\author[K. Ho]{Ky Ho}
\address{Ky Ho \newline 
	Institute of Applied Mathematics, University of Economics Ho Chi Minh City, 59C, Nguyen Dinh Chieu Street, District 3, Ho Chi Minh City, Viet Nam}
\email{kyhn@ueh.edu.vn}

\subjclass[2020]{35J20, 35J60, 35J70, 47J10, 46E35.}
\keywords{double phase operators; critical growth; concentration-compactness principle; variational method}

\begin{abstract}
	We establish a Lions-type concentration-compactness principle and its variant at infinity for Musielak-Orlicz-Sobolev spaces associated with a double phase operator with variable exponents. Based on these principles, we demonstrate the existence and concentration of solutions for a class of critical double phase equations of Schr\"odinger type  in $\RN$ involving variable exponents with various types of potentials. Our growth condition is more appropriately suited compared to the existing works. 
\end{abstract}

\maketitle
\numberwithin{equation}{section}
\newtheorem{theorem}{Theorem}[section]
\newtheorem{lemma}[theorem]{Lemma}
\newtheorem{proposition}[theorem]{Proposition}
\newtheorem{corollary}[theorem]{Corollary}
\newtheorem{definition}[theorem]{Definition}
\newtheorem{example}[theorem]{Example}
\newtheorem{remark}[theorem]{Remark}
\allowdisplaybreaks

%=======================SECTION 1. INTRODUCTION====================== %

\section{Introduction}\label{Intro}

%\subsection{Motivation and our goals}${}$
%\vskip5pt

%==========Motivation================

Given $N\geq 2$, this paper is concerned with problems in $\RN$ associated with a double phase operator with variable exponents given by
\begin{eqnarray}\label{oper}
	-\operatorname{div}\mathcal{A}(x,\nabla u)+ V(x)A(x,u),
\end{eqnarray}
where $\mathcal{A}: \mathbb{R}^N \times  \RN \to \RN  $, $A: \mathbb{R}^N \times  \R \to \R $ are defined as
\begin{equation}\label{A}
	\mathcal{A}(x,\xi): = |\xi|^{p(x)-2}\xi + a(x) |\xi|^{q(x)-2}\xi,\ A(x,t):=|t|^{p(x)-2}t+a(x)|t|^{q(x)-2}t,
\end{equation}
with  $a, p, q \in C^{0,1}(\mathbb{R}^N)$ such that $a(\cdot) \geq 0$, $1<p(\cdot)<q(\cdot)<N$, and $0\leq V(\cdot)\in L^1_{\loc}(\RN)$.

The main characteristic of the operator given in \eqref{oper} is that its behavior switches continuously between two sets $\{x \in \RN: \ a(x)=0\}$ and $\{x \in \RN: \ a(x)>0\}$. This is the reason why it is called double phase. 

In the past decades, there has been a growing interest in studying problems related to the  double phase operator 
\begin{equation}\label{d.p.cons}
	\operatorname{div}\left(|\nabla u|^{p-2}\nabla u+a(x)|\nabla u|^{q-2}\nabla u\right)
\end{equation}
and the associated energy integral
\begin{equation}\label{dp.f}
	I(u)=\int_\Omega \left(\frac{|\nabla u|^p}{p} + a(x)\frac{|\nabla u|^q}{q}\right)  \diff x,
\end{equation} 
where $\Omega$ is a domain in $\RN$. The functional $I$ in \eqref{dp.f} was originally introduced by Zhikov \cite{Zhikov-1995,Zhikov1997,ZKO} in the setting of homogenization of strongly anisotropic materials. The differential operator and the energy functional given in \eqref{d.p.cons} and \eqref{dp.f}, respectively, appear in several physical applications, for instance, in transonic flows \cite{Ba-Ra-Re},  quantum physic \cite{Ben2000}, reaction-diffusion systems \cite{Che2005} and non-Newtonian fluid \cite{LD18}.

\vskip5pt
The functional $I$ belongs to a class of integral functionals with an integrand $f$ satisfying a non-standard growth condition
\begin{equation*}
	C_1|\xi|^{p}\leq f(x,\xi)\leq C_2(1+|\xi|^q)\quad \text{for a.a. } x\in\Omega\ \ \text{and for all }\xi\in\R^N,
\end{equation*} 
introduced by Marcellini in \cite{Mar89b}. The regularity of local minimizers for such energy functionals has garnered significant interest from many authors, see e.g., \cite{Byun-Oh-2020,CM15a,CM15b,ELM,FMM,RT20} and the references therein. 

\vskip5pt
Lately, there have been a number of studies focusing on the existence results of elliptic problems governed by the operator \eqref{d.p.cons} when $\Omega$ is a bounded Lipschitz domain. We refer to 
\cite{CS16,FMPV23,FP20,GP19,GW21,Liu-Dai-2018,PS-CCM18} for some recent results on this topic. Very recently, Crespo-Blanco et al. \cite{crespo2022new} studied problems involving the operator $\operatorname{div}\mathcal{A}(x,\nabla u)$ in a bounded Lipschitz domain $\Omega$. Based on results obtained in \cite{fan2012,Mus}, they explored the fundamental properties of the Musielak-Orlicz spaces $L^{\mathcal{H}}(\Omega)$ and Musielak-Orlicz-Sobolev spaces $W^{1,\mathcal{H}}(\Omega)$, where $$\mathcal{H}(x,t):=t^{p(x)}+a(x) t^{q(x)}\quad\text{for}\ (x,t) \in \Omega \times [0,\infty).$$ 
Some followed up papers studying the double phase problems with variable exponents in a bounded domain can be referred to \cite{HaHo2023,HW2022,HW2022-2,KKOZ22,Pa-Ve-Wi24,Ve-Wi23} and the references therein. On the contrast, existence results for double phase problems in unbounded domains have been less discussed, even for the constant case of exponents. To date, as far as we aware, publications on this topic have been limited to some references, see \cite{Ar-Dw,ge2022quasilinear, Ge-Yu24,Liu-Dai-2020, Li-Wi22, Ste}. It is likely due to the lack of compactness arising in connection with variational methods. Motivated by the gap research on unbounded domains, in this paper we study problems in $\RN$ that involve the operator given in \eqref{oper}. We believe that our work is a significant contribution  to this research direction.

\vskip5pt
It is noteworthy that problems considered in most of existing works involve reaction terms whose growth does not surpass the threshold $p^\ast(\cdot)$ (here and in the sequel, $h^*(\cdot):=\frac{Nh(\cdot)}{N-h(\cdot)}$ for $h(\cdot)<N$). In \cite{Ar-Dw}, the authors dealt with a nonlinearity including an arbitrary growth term of the form $-|u|^{r(x)-2}u$, but this negative term was actually treated as a component of the main operator. With a growth not exceeding $p^\ast(\cdot)$, one easily obtains necessary embeddings in connection with variational methods since $W^{1,\mathcal{H}}(\Omega)\hookrightarrow W^{1,p(\cdot)}(\Omega)\hookrightarrow L^{p^\ast(\cdot)}(\Omega)$ (see Section~\ref{Pre} for the details). Nevertheless, $|t|^{p^*(x)}$ obviously does not capture the behavior for the critical term of Sobolev-type embedding on $\Wp{\mathcal{H}}$, which continuously switches order between $p^\ast(\cdot)$ and $q^\ast(\cdot)$.  In fact, the general definition of the Sobolev conjugate function of $\mathcal{H}$ is given implicitly, see \cite{crespo2022new}, but it is a challenge to set up results on Sobolev spaces with this implicit form. Therefore, Ho-Winkert \cite{HW2022} proposed the function
\begin{equation*}
	\mathcal{G^\ast}(x,t):=t^{p^\ast(x)} +   a(x)^{\frac{q^\ast(x)}{q(x)}}t^{q^\ast(x)}\quad \text{for} \ (x,t) \in \Omega \times [0,\infty),
\end{equation*}
as a critical term that captures the behavior of the Sobolev conjugate function of $\mathcal{H}$, and showed that
\begin{equation}\label{I-CE}
	W^{1,\mathcal{H}}(\Omega)\hookrightarrow L^{\mathcal{G^\ast}}(\Omega),
\end{equation}
when $\Omega$ is a bounded domain.
Furthermore, they also discussed the optimality of $\mathcal{G}^\ast$ among those  $\Psi$ of the form $\Psi(x,t)=|t|^{r(x)}+a(x)^{\frac{s(x)}{q(x)}}|t|^{s(x)}$ such that $W^{1,\mathcal{H}}(\Omega)\hookrightarrow  L^{\Psi}(\Omega)$. With the aid of the critical embeding \eqref{I-CE}, in our recent work \cite{HaHo2023}, we established multiplicity results for a class of double phase problems featuring the critical term $\mathcal{G^\ast}$ in a bounded domain subject to the Dirichlet boundary condition. Independently, a similar critical growth for double phase problems with constant exponents was considered in \cite{Co-Pe23}. These works seem to be the only existence results for double phase problems with growths surpassing $p^\ast(\cdot)$. Once again, we emphasize that results in \cite{Co-Pe23,HaHo2023} were obtained for problems in a bounded domain. If one wants to develop such results for critical double phase problems in unbounded domains, particularly using variational methods, it would be  important to set up the embedding \eqref{I-CE} for an arbitrary domain. For this reason, our first objective in this paper is to prove \eqref{I-CE} for an  open domain in $\RN$ and then explore function spaces associated with the double phase operator given in \eqref{oper}. This setting will   enable us to study double phase problems  in $\RN$ not only with a wider range of nonlinearities but also with a more appropriately suited critical term  to the main operator.

\vskip5pt
Critical problems are originated in geometry and physics, and the most notorious example is Yamabe's problem \cite{Aub76}. The lack of compactness arising in connection with the variational approach due to the unboundedness of $\RN$ and the critical term makes the critical problems in $\RN$ delicate and interesting.  In order to address such problems with constant exponents, the concentration-compactness principle (abbreviated by CCP) by Lions \cite{Lions} and its variant introduced in \cite{Chabrowski} have been used as a very effective tool. Using these CCPs, one can demonstrate the precompactness of Palais-Smale sequences, a crucial step in proving existence results via variational methods. These CCPs have been extended to the several types of Sobolev spaces in order to address problems involving an extension of the $p$-Laplacian and critical growth, see e.g., \cite{Bonder, BSS, fu2010multiple, HK2021, Pa-Pi14}. A Lions-type CCP for $W^{1,\mathcal{H}}(\Omega)$ with a bounded Lipschitz domain $\Omega$, in connection with the critical embedding \eqref{I-CE}, was established by us in \cite{HaHo2023}.  Motivated by this, our second objective in this paper is to establish a Lions-type CCP and its variant at infinity for  Musielak-Orlicz-Sobolev spaces associated with the double phase operator given in \eqref{oper}. This will be instrumental in handling double phase problems involving critical growths in the entire space $\RN$ in the future.

\vskip5pt
Upon acquiring these CCPs, we employ them to investigate the existence and concentration of solutions to equations of Schr\"odinger-type involving the double phase operator \eqref{oper} and a critical growth, which is our last main objective in this paper. Precisely, let us consider critical double phase equations of Schr\"odinger-type as follows:
\begin{equation}\label{eq1}
	-\operatorname{div}\mathcal{A}(x,\nabla u)+ \lambda V(x)A(x,u) = f(x,u)+\theta B(x,u) \quad \text{in}~ \RN,
\end{equation}
where $f(x,u)$ exhibits a subcritical growth while $B(x,u)$ is a generalization of the critical term $c_1|u|^{p^*(x)-2}u+c_2a(x)^{\frac{q^*(x)}{q(x)}}|u|^{q^*(x)-2}u,$ with   $c_1>0$, $c_2\geq 0$,  and $\lambda,\theta$ are positive real parameters.

\vskip5pt
Equations of Schr\"odinger-type have long been an interest research direction to mathematicians due to their profound applications in the quantum physic. The class of equations related to problem \eqref{eq1} can be seen as an extension. Our first question is regarding the existence result. In comparison to aforementioned double phase papers in $\RN$, which only dealt with a potential $V$ satisfying $\inf_{x\in\RN} V(x)>0$, our novelty is that we can handle problems with $V$ holding $\inf_{x\in\RN} V(x)=0$ . Our second question is about a phenomena called solution concentration of problem \eqref{eq1}. Our interest stems from the work \cite{Bar.Wang2001} by Bartsch et al. and the followed up papers it inspired, see e.g.,\cite{sun2014ground, sun.wu.2020, Xie.Ma2015, Yang.Liu 2016}  and the references therein. Specifically, in \cite{sun2014ground}  the authors considered the following model:
\begin{equation} \label{sun1}
	-\left(a\int_{\R^N}|\nabla u|^2\diff x+b\right)\Delta u + \lambda V(x) u = f(x,u) ~\text{in} ~ \RN,
\end{equation}
where $a,b>0$, and $0\leq V(\cdot)\in C(\RN)$ satisfies the following conditions:
\begin{enumerate}
	\item[$(i)$]  there exists $K_0>0$ such that the set $\left\{x \in \mathbb{R}^{N}: V(x)<K_0\right\}$ is nonempty and has finite measure;
	\item[$(ii)$]  $\Omega_V:= \operatorname{int} \{V^{-1}(0)\}$ is a nonempty bounded smooth domain, and $\overline{\Omega}_V= V^{-1}(0)$.
\end{enumerate}
They showed that under some suitable conditions of $f$, when $\lambda\to\infty$, equation \eqref{sun1} has a sequence of weak solutions $\{u_{\lambda_n}\}_{n \in \N}$ such that $u_{\lambda_n} \to \bar{u}$ in $L^r(\R^N)$ for every $r\in [2,2^*)$,  where $\bar{u}$ is a weak solution of  the  limit problem
\begin{equation*}\label{sun2}
	\begin{cases}
		-\left(a\int_{\R^N}|\nabla \bar{u}|^2\diff x+b\right)\Delta \bar{u}=f(x, \bar{u}) & \quad \text { in } \Omega_V,\\
		\bar{u}=0 & \quad \text { on }  \partial \Omega_V.
	\end{cases}
\end{equation*}
This phenomena, known as the concentration of solutions near the bottom of the potential well, has attracted the great attention from both mathematicians and physicists recently. We stress that all of the referenced papers focused on problems governed by the Laplace operator. This raises a natural question whether the concentration phenomena still holds  for classes of equations controlled by another type of operators in general or the double phase operator  in particular. Finally,  it is worth pointing out that we aim to deal with a wider class of potentials compared to the referred papers on the concentration, that is our potential may be a constant.

\vskip5pt
We organize the paper as follows.  In  Section~\ref{Pre}, we explore the  Musielak-Orlicz-Sobolev spaces, which play as the solution space for problem \eqref{eq1}. The critical embedding (Theorem \ref{T.CE1}) is the main result of this section. In Section~\ref{ccp}, we  establish two CCPs  (Theorems \ref{Theo.CCP1} and \ref{Theo.CCP.inf}) that extend the CCPs in \cite{Lions} and \cite{Chabrowski} to the aformentioned solution spaces. Section~\ref{existence} is devoted to results concerning the existence and concentration of solutions to problem \eqref{eq1} (Theorems \ref{Theo.Superlinear} and \ref{Theo.Superlinear2}).

\vskip5pt
\noindent\textbf{Notation.} There are some notations that are used frequently throughout the paper, and for the reader's convenience, we make a list  below:
\begin{itemize}[label={\tiny\raisebox{0.8ex}{\textbullet}}]
\item $\Omega$ represents an open domain in $\RN$, $\Omega^c$ means the complement of $\Omega$ in $\RN$.
\item $m^-:=\inf _{x \in \overline{\Omega}} m(x)$ and $m^+:=\sup _{x \in \overline{\Omega}} m(x)$ for $m \in C\left(\overline{\Omega}\right)$.
\item $C_+(\overline{\Omega}):=\left\{m \in C\left(\overline{\Omega}\right): 1<m^-\leq m^+<\infty\right\}.$
\item $r'(\cdot):=\frac{r(\cdot)}{r(\cdot)-1}$ for $r \in C_+(\overline{\Omega})$.
\item For $f,g\in C(\overline{\Omega})$, we write $f(\cdot) \ll g(\cdot)$ if $\displaystyle \inf_{x \in \overline{\Omega}} (g-f)(x) > 0$. 
\item $M(\Omega)$ denotes the space of all Lebesgue measurable functions $u:\Omega\to\R$.
\item For a measurable subset $E\subset\RN$, $|E|$ stands for the Lebesgue measure of $E$.
\item   $X \hookrightarrow Y$ indicates that the space $X$ is embedded continuously into the space $Y$, while $X \hookrightarrow\hookrightarrow Y$ means that $X$ is embedded compactly into $Y$.
\item $X^\ast$ and $\langle \cdot,\cdot \rangle$  respectively denote the dual space of a normed space $X$ and the duality pairing between $X^\ast$ and $X$.
\item $u_n \to u$ (resp., $u_n \rightharpoonup u$, $u_n \stackrel{\ast}{\rightharpoonup} u$) in $X$ means $u_n$ converges to $u$ strongly (resp., weakly, weakly-$\ast$) in a normed space $X$ as $n \to \infty$.
\item $B_\epsilon(x)$ denotes  a ball in $\RN$ centered at $x$ with radius $\epsilon$; if $x$ is the origin, we write shortly $B_\epsilon$.

\item  The letters $C,\,C_i$ stand for generic positive constants that may be different through lines and depend only on the data.
\end{itemize}

%=======================SECTION 2. PRELIMINARIES====================== %

%----------------------------------------------------------------

\section{Variable exponent spaces} \label{Pre}
In this section, we provide some properties of the function spaces essential to our approach. Throughout this section, let $\Omega$ be an open domain in $\RN$ with the cone property.

\subsection{Variable exponent Lebesgue spaces}${}$

\vskip5pt

Let $z \in C_+(\overline\Omega) $  and $\mu$ be a $\sigma$-finite and complete measure  in $\overline{\Omega}$. We define the variable exponent Lebesgue space $L_\mu^{z(\cdot)}(\Omega)$ as
$$
L_\mu^{z(\cdot)}(\Omega) := \left \{ u : \Omega\to\mathbb{R}\  \hbox{is}\  \mu\text{-measurable},\ \int_\Omega |u(x)|^{z(x)} \;\diff\mu < \infty \right \},
$$
endowed with the Luxemburg norm
$$
\|u\|_{L_\mu^{z(\cdot)}(\Omega)}:=\inf\left\{\lambda >0:
\int_\Omega 
\Big|\frac{u(x)}{\lambda}\Big|^{z(x)}\;\diff\mu\le1\right\}.
$$
Then, $L_\mu^{z(\cdot)}(\Omega)$ is a separable and uniformly convex Banach space (see \cite{Diening}). When $\mu$ is the Lebesgue measure, we write $L^{z(\cdot)}(\Omega)$ and $\|\cdot\|_{L^{z(\cdot)}(\Omega)}$ in place of $L_\mu^{z(\cdot)}(\Omega)$  and $\|\cdot\|_{L_\mu^{z(\cdot)}(\Omega)}$, respectively.

%========PROP1: HOLDER INEQUALITY===========%

\begin{proposition}[\cite{Diening}] \label{prop.Holder}
For any $u\in L_{\mu}^{z(\cdot)}(\Omega)$ and $v\in L_{\mu}^{z'(\cdot)}(\Omega)$, the following H\"older-type inequality holds
\begin{equation*}
	\left|\int_\Omega uv\,\diff \mu\right|\leq\ 2 \|u\|_{L_\mu^{z(\cdot)}(\Omega)}\|v\|_{L_\mu^{z'(\cdot)}(\Omega)}.
\end{equation*}
\end{proposition}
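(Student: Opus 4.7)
The plan is to reduce the inequality to a pointwise Young inequality after normalizing by the Luxemburg norms. First, if either $\|u\|_{L_\mu^{z(\cdot)}(\Omega)}$ or $\|v\|_{L_\mu^{z'(\cdot)}(\Omega)}$ vanishes, then $uv=0$ $\mu$-a.e.\ and there is nothing to prove; so I may assume both are strictly positive and set $\alpha:=\|u\|_{L_\mu^{z(\cdot)}(\Omega)}$, $\beta:=\|v\|_{L_\mu^{z'(\cdot)}(\Omega)}$.

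The key fact that I will extract from the definition of the Luxemburg norm is the so-called unit ball property of the modular: since $z^+<\infty$ (a consequence of $z\in C_+(\overline{\Omega})$), a monotone convergence argument taking $\lambda\downarrow\alpha$ in the defining infimum yields
\begin{equation*}
\int_\Omega \left(\frac{|u(x)|}{\alpha}\right)^{z(x)}\diff\mu\le 1, \qquad \int_\Omega \left(\frac{|v(x)|}{\beta}\right)^{z'(x)}\diff\mu\le 1.
\end{equation*}
I would then apply the classical Young inequality $ab\le \frac{a^{r}}{r}+\frac{b^{r'}}{r'}$ pointwise, with the exponent pair varying in $x$: choose $r=z(x)$, $a=|u(x)|/\alpha$, $b=|v(x)|/\beta$, which gives
\begin{equation*}
\frac{|u(x)v(x)|}{\alpha\beta}\le \frac{1}{z(x)}\left(\frac{|u(x)|}{\alpha}\right)^{z(x)}+\frac{1}{z'(x)}\left(\frac{|v(x)|}{\beta}\right)^{z'(x)}.
\end{equation*}
Integrating this inequality over $\Omega$ against $\mu$, bounding $1/z(x),\,1/z'(x)\le 1$ pointwise, and invoking the two modular bounds displayed above, the right-hand side is controlled by $1+1=2$; consequently
\begin{equation*}
\left|\int_\Omega uv\,\diff\mu\right|\le \int_\Omega |uv|\,\diff\mu \le 2\alpha\beta,
\end{equation*}
which is the desired Hölder-type inequality.

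There is no substantial obstacle in this argument; the only slightly delicate ingredient is the passage from the defining infimum of the Luxemburg norm to the unit-ball property of the modular, which requires $z^+<\infty$ to guarantee continuity of the modular in the scaling parameter $\lambda$. Everything else is a direct pointwise estimate combined with integration.
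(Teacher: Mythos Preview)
Your argument is correct and is precisely the standard proof of the H\"older inequality in variable exponent Lebesgue spaces. Note, however, that the paper does not supply its own proof of this proposition: it is stated with a citation to \cite{Diening} and used as a black box throughout, so there is no paper-proof to compare against beyond observing that your derivation is the one given in that reference.
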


%---------------PROP2: ESTIMATING MODULAR BY NORM--------------------%

\begin{proposition}[\cite{Diening}] \label{prop.norm-modular}
Let $u\in L_{\mu}^{z(\cdot) }(\Omega )$ and denote $\rho (u)=\int_{\Omega }| u(x)| ^{z(x) }\diff \mu$. It holds that
\begin{enumerate}
	\item[(i)] $ \|u\|_{L_{\mu}^{z(\cdot) }(\Omega )}<1$ $(resp. >1,=1)$
	if and only if \  $\rho (u) <1$ $(resp. >1,=1)$;
	
	\item[(ii)] if \  $ \|u\|_{L_{\mu}^{z(\cdot) }(\Omega )}>1,$ then  $ \|u\|^{z^{-}}_{L_{\mu}^{z(\cdot) }(\Omega )}\leq \rho (u) \leq  \|u\|_{L_{\mu}^{z(\cdot) }(\Omega )}^{z^{+}}$;
	\item[(iii)] if \ $ \|u\|_{L_{\mu}^{z(\cdot) }(\Omega )}<1,$ then $ \|u\|_{L_{\mu}^{z(\cdot) }(\Omega )}^{z^{+}}\leq \rho
	(u) \leq  \|u\|_{L_{\mu}^{z(\cdot) }(\Omega )}^{z^{-}}$.
\end{enumerate}
Consequently,
$$ \|u\|_{L_{\mu}^{z(\cdot) }(\Omega )}^{z^{-}}-1\leq \rho (u) \leq  \|u\|_{L_{\mu}^{z(\cdot) }(\Omega )}^{z^{+}}+1,\ \forall u\in L_\mu^{z(\cdot)}(\Omega ).$$
\end{proposition}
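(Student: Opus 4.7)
My plan is to reduce the entire proposition to the scaling identity
$\rho\bigl(u/\|u\|_{L_\mu^{z(\cdot)}(\Omega)}\bigr) = 1$ for every nonzero $u \in L_\mu^{z(\cdot)}(\Omega)$. To establish this, I would argue that $\lambda \mapsto \rho(u/\lambda)$ is continuous on $(0,\infty)$, strictly decreasing, tends to $+\infty$ as $\lambda \to 0^+$, and tends to $0$ as $\lambda \to \infty$. Continuity follows from the dominated convergence theorem once one observes that $|u(x)/\lambda|^{z(x)}$ is controlled on any compact subinterval of $(0,\infty)$ by an integrable majorant built from $|u|^{z(\cdot)}$ (here the hypothesis $z^+ < \infty$ is essential). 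The boundary behavior follows from monotone and dominated convergence, respectively. Since the infimum in the definition of the Luxemburg norm is attained at the unique root of $\rho(u/\lambda) = 1$, I obtain the identity $\rho(u/\|u\|_{L_\mu^{z(\cdot)}(\Omega)}) = 1$.

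The second step is to extract the modular bounds by pointwise manipulation of the powers. For any $\lambda > 0$ and a.e.\ $x \in \Omega$,
\[\min\{\lambda^{-z^-}, \lambda^{-z^+}\}\, |u(x)|^{z(x)} \;\leq\; |u(x)/\lambda|^{z(x)} \;\leq\; \max\{\lambda^{-z^-}, \lambda^{-z^+}\}\, |u(x)|^{z(x)},\]
since when $\lambda \geq 1$ the map $t \mapsto \lambda^{-t}$ is decreasing, so $\lambda^{-z^+} \leq \lambda^{-z(x)} \leq \lambda^{-z^-}$, while the two outer bounds swap when $\lambda \leq 1$. Integrating and then substituting $\lambda = \|u\|_{L_\mu^{z(\cdot)}(\Omega)}$ into the identity from the first paragraph yields (ii) in the case $\|u\|_{L_\mu^{z(\cdot)}(\Omega)} > 1$ and (iii) in the case $\|u\|_{L_\mu^{z(\cdot)}(\Omega)} < 1$.

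With (ii) and (iii) in hand, part (i) follows by direct comparison: (ii) forces $\rho(u) \geq \|u\|_{L_\mu^{z(\cdot)}(\Omega)}^{z^-} > 1$ whenever the norm exceeds $1$, (iii) forces $\rho(u) \leq \|u\|_{L_\mu^{z(\cdot)}(\Omega)}^{z^-} < 1$ whenever the norm is below $1$, and the equality case is immediate from the scaling identity with $\lambda = 1$. Each converse direction follows by contraposition from these three implications. The concluding two-sided bound $\|u\|_{L_\mu^{z(\cdot)}(\Omega)}^{z^-} - 1 \leq \rho(u) \leq \|u\|_{L_\mu^{z(\cdot)}(\Omega)}^{z^+} + 1$ then follows: when the norm exceeds $1$ it is weaker than (ii), and when the norm is at most $1$ it follows from (iii) together with the trivial bounds $\|u\|_{L_\mu^{z(\cdot)}(\Omega)}^{z^-} - 1 \leq 0$ and $\rho(u) \leq \|u\|_{L_\mu^{z(\cdot)}(\Omega)}^{z^-} \leq 1 \leq \|u\|_{L_\mu^{z(\cdot)}(\Omega)}^{z^+} + 1$.

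The main obstacle, and really the only nontrivial ingredient, is the attainment $\rho(u/\|u\|_{L_\mu^{z(\cdot)}(\Omega)}) = 1$; after that, everything is bookkeeping around the pointwise power comparison. This attainment rests on $z^+ < \infty$, which is guaranteed by $z \in C_+(\overline{\Omega})$, so there is no real obstruction, only the need to be careful with the dominated convergence argument on an unbounded domain with a general $\sigma$-finite measure $\mu$.
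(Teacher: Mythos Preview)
Your argument is correct and follows the standard route to this well-known modular--norm relation: first establish the unit-ball identity $\rho(u/\|u\|)=1$ via continuity and strict monotonicity of $\lambda\mapsto\rho(u/\lambda)$, then read off (ii) and (iii) from the pointwise sandwich $\lambda^{-z(x)}\in[\lambda^{-z^+},\lambda^{-z^-}]$ (or the reverse, according to whether $\lambda\gtrless 1$), and finally deduce (i) and the global inequality by bookkeeping.

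Note, however, that the paper does not supply its own proof of this proposition: it is quoted from \cite{Diening} without argument, so there is no ``paper's proof'' to compare against. Your proof is essentially the one found in that reference (and in most treatments of variable-exponent Lebesgue spaces), so nothing is missing or novel here. One very minor remark: your claim that $\rho(u/\lambda)\to+\infty$ as $\lambda\to 0^+$ tacitly uses that a nonzero $u$ is bounded below by some $c>0$ on a set of positive $\mu$-measure; this is standard but worth a word since $\mu$ is only assumed $\sigma$-finite.
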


In the next sections, we will frequently make use of the following elementary inequalities involving variable exponents: 
\begin{equation}\label{young}
|ab|\leq \frac{1}{r(x)}\eps |a|^{r(x)}+\frac{r(x)-1}{r(x)}\eps^{-\frac{1}{r(x)-1}}|b|^{\frac{r(x)}{r(x)-1}}\leq \eps |a|^{r(x)}+\left(1+\eps^{-\frac{1}{r^--1}}\right)|b|^{\frac{r(x)}{r(x)-1}} 
\end{equation}
and
\begin{equation}\label{Ineq2a}
|a+b|^{r(x)} \leq (1+\eps) |a|^{r(x)}+ \left(1+\frac{1}{(1+\varepsilon)^{\frac{1}{r^-}}-1}\right)^{r^+} |b|^{r(x)} 
\end{equation}
for all $a,b\in\R$,  $\eps>0$, $x\in \overline{\Omega}$,  and $r\in C_+(\close)$.

%=====SUBSECTION: A CLASS OF ORLICIZ MUSIELAK SPACE=============

\subsection{A class of Musielak-Orlicz-Sobolev spaces}${}$ \label{Subsec.M.O.S space}

\vskip5pt

Define $\Psi:\, \overline{\Omega}\times [0,\infty)\to [0,\infty)$ as 
\begin{align}\label{Psi}
\Psi(x,t):=w_1(x)t^{\alpha(x)}+w_2(x)t^{\beta(x)}
\quad\text{for } (x,t)\in \overline{\Omega}\times [0,\infty),
\end{align}
where $\alpha,\,\beta\in C_+(\close)$ with $\alpha(\cdot)<\beta(\cdot)$, $0 <  w_1(\cdot) \in \Lp{1}\cup L^\infty(\Omega)$ and $0 \leq w_2(\cdot) \in \Lp{1}\cup L^\infty(\Omega)$.

\noindent Define the modular $\rho_{\Psi}$ associated with $\Psi$ as
\begin{align*}
\rho_{\Psi}(u): =  \into \Psi (x,|u(x)|)\,\diff x .
\end{align*}
The corresponding Musielak-Orlicz space $\Lp{\Psi}$ is defined as 
\begin{align*}
L^{\Psi}(\Omega):=\left \{u\in M(\Omega):\,\rho_{\Psi}(u) < \infty \right\},
\end{align*}
endowed with the norm
\begin{align*}
\|u\|_{\Psi}:= \inf \left \{ \tau >0 :\, \rho_{\Psi}\left(\frac{u}{\tau}\right) \leq 1  \right \}.
\end{align*}
Then, $\Lp{\Psi}$ is a separable, uniformly convex and reflexive Banach space (see \cite[Theorems 3.3.7, 3.5.2 and 3.6.6]{HH.Book}). Moreover, the following relation between the modular $\rho_{\Psi}$ and the norm $\|\cdot\|_{\Psi}$ can be easily obtained in the same manner as \cite[Proof of Proposition 2.13]{crespo2022new}.
\begin{proposition}\label{prop.nor-mod.D}
Let $u, u_n\in\Lp{\Psi}$ ($n\in\N$). Then, the following assertions hold:
\begin{enumerate}
	\item[(i)]
	if $u\neq 0$, then $\|u\|_{\Psi}=\lambda$ if and only if $ \rho_{\Psi}(\frac{u}{\lambda})=1$;
	\item[(ii)]
	$\|u\|_{\Psi}<1$ (resp.\,$>1$, $=1$) if and only if $ \rho_{\Psi}(u)<1$ (resp.\,$>1$, $=1$);
	\item[(iii)]
	if $\|u\|_{\Psi}<1$, then $\|u\|_{\Psi}^{ \beta^+}\leqslant \rho_{\Psi}(u)\leqslant\|u\|_{\Psi}^{ \alpha^-}$;
	\item[(iv)]
	if $\|u\|_{\Psi}>1$, then $\|u\|_{\Psi}^{ \alpha^-}\leqslant \rho_{\Psi}(u)\leqslant\|u\|_{\Psi}^{ \beta^+}$;
	\item[(v)] $\|u_n\|_{\Psi}\to 0$ as $n\to \infty$ if and only if $\rho_{\Psi}(u_n)\to 0$ as $n\to \infty$.
\end{enumerate}
\end{proposition}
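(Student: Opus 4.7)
The plan is to derive everything from the two-sided homogeneity of $\Psi(x,\cdot)$. Because $\Psi(x,\lambda t)=\lambda^{\alpha(x)}w_1(x)t^{\alpha(x)}+\lambda^{\beta(x)}w_2(x)t^{\beta(x)}$ with $1<\alpha^-\leq\alpha(x)\leq\beta(x)\leq\beta^+$, a direct case analysis on whether $\lambda\leq 1$ or $\lambda\geq 1$ yields the pointwise bounds
$$
\lambda^{\beta^+}\Psi(x,t)\leq \Psi(x,\lambda t)\leq \lambda^{\alpha^-}\Psi(x,t)\quad\text{if }0<\lambda\leq 1,
$$
$$
\lambda^{\alpha^-}\Psi(x,t)\leq \Psi(x,\lambda t)\leq \lambda^{\beta^+}\Psi(x,t)\quad\text{if }\lambda\geq 1,
$$
valid for all $(x,t)\in\overline{\Omega}\times[0,\infty)$. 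Once integrated against $x$, these are exactly the inequalities that, applied with $\lambda=\|u\|_{\Psi}$, translate (i) into the sharp estimates (iii) and (iv).

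For $u\in L^{\Psi}(\Omega)\setminus\{0\}$, I would introduce $\phi(\lambda):=\rho_{\Psi}(u/\lambda)$ on $(0,\infty)$ and first establish that $\phi$ is continuous and strictly decreasing, with $\phi(\lambda)\to 0$ as $\lambda\to\infty$ (dominated convergence, using the majorant $\Psi(x,|u(x)|)$ for $\lambda\geq 1$) and $\phi(\lambda)\to\infty$ as $\lambda\to 0^+$ (monotone convergence, using that $u\neq 0$ on a set of positive measure and that $\Psi(x,\cdot)$ is unbounded). By the intermediate value theorem there is a unique $\lambda^{\star}>0$ with $\phi(\lambda^{\star})=1$, and the definition of the Luxemburg norm combined with the strict monotonicity of $\phi$ forces $\|u\|_{\Psi}=\lambda^{\star}$, giving (i). Assertion (ii) follows immediately by comparing the value of $\phi$ at $\lambda=1$ with its value at $\lambda=\lambda^{\star}$, via the same monotonicity.

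For (iii), assume $0<\|u\|_{\Psi}<1$. By (i) we have $\rho_{\Psi}(u/\|u\|_{\Psi})=1$; writing $|u(x)|=\|u\|_{\Psi}\cdot(|u(x)|/\|u\|_{\Psi})$ and applying the first pair of pointwise bounds with $\lambda=\|u\|_{\Psi}$, integration yields
$$
\|u\|_{\Psi}^{\beta^+}\leq \rho_{\Psi}(u)\leq \|u\|_{\Psi}^{\alpha^-},
$$
which is (iii); case (iv) is completely analogous using the second pair of bounds. For (v), if $\rho_{\Psi}(u_n)\to 0$ then by (ii) one has $\|u_n\|_{\Psi}<1$ for $n$ large, and the lower bound in (iii) gives $\|u_n\|_{\Psi}\leq\rho_{\Psi}(u_n)^{1/\beta^+}\to 0$; conversely, $\|u_n\|_{\Psi}\to 0$ combined with the upper bound in (iii) gives $\rho_{\Psi}(u_n)\leq\|u_n\|_{\Psi}^{\alpha^-}\to 0$.

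The one step that requires some care, and which I expect to be the main (if modest) obstacle, is the continuity together with the correct boundary behaviour of $\phi$ at $\lambda\to 0^+$ and $\lambda\to\infty$, since this is what legitimizes identifying the Luxemburg infimum with the unique root of $\phi=1$ and hence underpins (i). Once that is in place, the remaining assertions are purely algebraic consequences of the two-sided homogeneity bounds, following the scheme of the proof of Proposition~2.13 in \cite{crespo2022new} adapted to the present weighted setting with $w_1,w_2\in L^1(\Omega)\cup L^\infty(\Omega)$.
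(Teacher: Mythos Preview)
Your proposal is correct and follows exactly the approach the paper indicates: the paper does not write out a proof but states that the result ``can be easily obtained in the same manner as \cite[Proof of Proposition 2.13]{crespo2022new}'', and your argument---the two-sided power bounds on $\Psi(x,\lambda t)$, the strict monotonicity and limiting behaviour of $\lambda\mapsto\rho_\Psi(u/\lambda)$ to pin down the Luxemburg infimum as a genuine minimum, and the algebraic deduction of (ii)--(v) from (i)---is precisely that scheme carried over to the weighted setting. The only additional care needed here, which you already flag, is that $w_1>0$ a.e.\ ensures $\rho_\Psi(u)>0$ whenever $u\neq 0$, so $\phi(\lambda)\to\infty$ as $\lambda\to 0^+$.
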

\begin{corollary}\label{Cor-n}
	For $r,s\in C(\overline{\Omega})$ satisfying $0<r(\cdot)\ll \alpha(\cdot)$ and $0<s(\cdot)\ll \beta(\cdot)$, one has
	$$\left\|w_1^{\frac{r}{\alpha}}|u|^{r}\right\|_{L^{\frac{\alpha(\cdot)}{r(\cdot)}}(\Omega)}\leq 1+\|u\|_{\Psi}^{r^+}\quad \text{and}\quad \left\|w_2^{\frac{s}{\beta}}|u|^{s}\right\|_{L^{\frac{\beta(\cdot)}{s(\cdot)}}(\Omega)}\leq 1+\|u\|_{\Psi}^{s^+},\quad \forall u\in L^{\Psi}(\Omega).$$
\end{corollary}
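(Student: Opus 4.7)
Let me denote $v_1 := w_1^{r/\alpha}|u|^r$ and $v_2 := w_2^{s/\beta}|u|^s$. Since the two estimates are proved by the same argument (with $(w_1,\alpha,r)$ replaced by $(w_2,\beta,s)$), I will only describe the proof for $v_1$. The plan is to exploit the elementary identity
\[
\int_\Omega \abs{v_1(x)}^{\alpha(x)/r(x)}\,\diff x \;=\; \int_\Omega w_1(x)|u(x)|^{\alpha(x)}\,\diff x \;\le\; \rho_\Psi(u),
\]
which shows that the modular of $v_1$ in $L^{\alpha(\cdot)/r(\cdot)}(\Omega)$ is controlled by the modular of $u$ in $L^{\Psi}(\Omega)$. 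Note that $\alpha(\cdot)/r(\cdot)\in C_+(\close)$ thanks to the assumption $r(\cdot)\ll\alpha(\cdot)$, so Proposition~\ref{prop.norm-modular} is applicable to $v_1$.

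Then I would split into two cases according to the size of $\|u\|_\Psi$. If $\|u\|_\Psi\le 1$, Proposition~\ref{prop.nor-mod.D}(ii) gives $\rho_\Psi(u)\le 1$, hence the modular of $v_1$ is $\le 1$, and by Proposition~\ref{prop.norm-modular}(i) we get $\|v_1\|_{L^{\alpha(\cdot)/r(\cdot)}(\Omega)}\le 1\le 1+\|u\|_\Psi^{r^+}$. If instead $\|u\|_\Psi>1$, set $\lambda:=\|u\|_\Psi$ and $t:=\lambda^{r^+}$. By Proposition~\ref{prop.nor-mod.D}(i), $\rho_\Psi(u/\lambda)=1$. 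Since $\lambda>1$ and $r(x)\le r^+$ on $\overline{\Omega}$, one has $t\ge \lambda^{r(x)}$ pointwise, whence $t^{-\alpha(x)/r(x)}\le \lambda^{-\alpha(x)}$. Therefore
\[
\int_\Omega\left|\frac{v_1(x)}{t}\right|^{\alpha(x)/r(x)}\diff x \;=\; \int_\Omega t^{-\alpha(x)/r(x)}w_1(x)|u(x)|^{\alpha(x)}\,\diff x \;\le\; \int_\Omega w_1(x)\left(\frac{|u(x)|}{\lambda}\right)^{\alpha(x)}\diff x\;\le\;\rho_\Psi(u/\lambda)=1,
\]
and the definition of the Luxemburg norm yields $\|v_1\|_{L^{\alpha(\cdot)/r(\cdot)}(\Omega)}\le t=\|u\|_\Psi^{r^+}\le 1+\|u\|_\Psi^{r^+}$. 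Combining both cases finishes the proof for $v_1$; the estimate for $v_2$ is identical.

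\textbf{Expected difficulty.} There is no real obstacle here; the statement is a quantitative reformulation of the norm-modular correspondence. The only subtlety worth emphasising is that, because the exponent $\alpha(\cdot)/r(\cdot)$ is variable, one cannot factor $t^{-\alpha(x)/r(x)}$ out of the integral by a single uniform power; the trick is to route the estimate through the normalised modular $\rho_\Psi(u/\|u\|_\Psi)=1$, which is exactly what the choice $t=\|u\|_\Psi^{r^+}$ accomplishes in the regime $\|u\|_\Psi>1$. The additive constant $1$ on the right-hand side is then precisely what is needed to cover the complementary regime $\|u\|_\Psi\le 1$ in a unified form.
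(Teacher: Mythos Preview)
Your proof is correct and uses essentially the same idea as the paper: exploit the identity $\int_\Omega |v_1|^{\alpha/r}\,\diff x=\int_\Omega w_1|u|^\alpha\,\diff x\le\rho_\Psi(u)$ together with the norm--modular relations from Propositions~\ref{prop.norm-modular} and \ref{prop.nor-mod.D}, and handle the variable exponent by passing through the normalised modular at the scale $\|u\|_\Psi^{r^+}$. The only cosmetic difference is that the paper splits cases according to the size of $\lambda:=\|v_1\|_{L^{\alpha(\cdot)/r(\cdot)}(\Omega)}$ (showing that $\lambda>1$ forces $\lambda^{1/r^+}\le\|u\|_\Psi$), whereas you split according to $\|u\|_\Psi$; the two computations are dual and equally short.
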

\begin{proof}
	Let $u\in L^{\Psi}(\Omega)$ and set $\lambda:=\left\|w_1^{\frac{r}{\alpha}}|u|^{r}\right\|_{L^{\frac{\alpha(\cdot)}{r(\cdot)}}(\Omega)}$. If  $\lambda>1$, then by Proposition~\ref{prop.nor-mod.D} one has
	$$1=\int_\Omega\left|\frac{w_1(x)^{\frac{r(x)}{\alpha(x)}}|u|^{r(x)}}{\lambda}\right|^{\frac{\alpha(x)}{r(x)}}\diff x\leq \int_\Omega w_1(x)\left|\frac{u}{\lambda^{\frac{1}{r^+}}}\right|^{\alpha(x)}\diff x\leq \rho_{\Psi}\left(\frac{u}{\lambda^{\frac{1}{r^+}}}\right).$$
	By Proposition~\ref{prop.nor-mod.D} again, one has $\lambda^{\frac{1}{r^+}}\leq \|u\|_{\Psi}.$ Thus, we obtain
	$$\left\|w_1^{\frac{r}{\alpha}}|u|^{r}\right\|_{L^{\frac{\alpha(\cdot)}{r(\cdot)}}(\Omega)}\leq 1+\|u\|_{\Psi}^{r^+},\quad \forall u\in L^{\Psi}(\Omega).$$
	The remaining inequality is proved in the same fashion.
\end{proof}
We have the following extension of the Brezis-Lieb Lemma to the Musielak-Orlicz spaces $L^{\Psi}(\Omega)$. 
\begin{lemma}\label{L.brezis-lieb}
Let $\{f_n\}_{n\in\N}$ be a bounded sequence in $L^{\Psi}(\Omega)$
and $f_n(x)\to f(x)$ for a.a. $x\in\Omega$. Then $f\in L^{\Psi}(\Omega)$ and
\begin{equation}\label{LeBL}
	\lim_{n\to\infty}\int_{\Omega} \Big|\Psi(x,|f_n|)
	-\Psi(x,|f_n-f|)-\Psi(x,|f|)\Big|\diff x=0.
\end{equation}
\end{lemma}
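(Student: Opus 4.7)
The plan is to mimic the classical Brezis--Lieb argument, combining a uniform pointwise $\eps$-inequality with Lebesgue's dominated convergence theorem, while carefully tracking the variable exponents and the measurable weights. First, I would check that $f\in L^{\Psi}(\Omega)$ via Fatou's lemma: since $\Psi(x,\cdot)$ is continuous on $[0,\infty)$, we have $\Psi(x,|f_n(x)|)\to\Psi(x,|f(x)|)$ for a.a.\ $x\in\Omega$, while Proposition~\ref{prop.nor-mod.D} combined with boundedness of $\{f_n\}$ in $L^{\Psi}(\Omega)$ yields $\sup_n\int_\Omega\Psi(x,|f_n|)\,\diff x<\infty$, which in turn forces $\int_\Omega\Psi(x,|f|)\,\diff x<\infty$.

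The key ingredient will be the following uniform pointwise inequality: for every $\eps>0$ there exists $C_\eps>0$, depending only on $\alpha^{\pm}$ and $\beta^{\pm}$, such that for all $a,b\in\R$ and a.a.\ $x\in\Omega$,
\begin{equation*}
\bigl|\Psi(x,|a+b|)-\Psi(x,|a|)-\Psi(x,|b|)\bigr|\,\leq\,\eps\,\Psi(x,|b|)+C_\eps\,\Psi(x,|a|).
\end{equation*}
To derive this, I would apply \eqref{Ineq2a} twice, once with $r=\alpha(\cdot)$ and once with $r=\beta(\cdot)$: interchanging the roles of $a$ and $b$ in \eqref{Ineq2a} gives $|a+b|^{r(x)}-|b|^{r(x)}\leq\eps|b|^{r(x)}+C_\eps|a|^{r(x)}$, while applying \eqref{Ineq2a} to $|b|^{r(x)}=|(a+b)+(-a)|^{r(x)}$ and then absorbing a $|a+b|^{r(x)}$ term via the elementary bound $|a+b|^{r(x)}\leq 2^{r^+-1}(|a|^{r(x)}+|b|^{r(x)})$ furnishes the matching reverse inequality. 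Multiplying by $w_1(x)$ and $w_2(x)$ respectively and summing the two exponent contributions produces the displayed estimate.

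With this in hand, set $g_n:=f_n-f$, so that $g_n\to 0$ a.e.\ and $\{g_n\}$ is bounded in $L^{\Psi}(\Omega)$, and define
\begin{equation*}
W_{n,\eps}(x):=\Bigl(\bigl|\Psi(x,|f_n|)-\Psi(x,|g_n|)-\Psi(x,|f|)\bigr|-\eps\,\Psi(x,|g_n|)\Bigr)_+.
\end{equation*}
Applying the pointwise inequality above with $a=f(x)$ and $b=g_n(x)$ gives $W_{n,\eps}(x)\leq C_\eps\,\Psi(x,|f(x)|)\in L^{1}(\Omega)$, while continuity of $\Psi(x,\cdot)$ together with $g_n(x)\to 0$ a.e.\ forces $W_{n,\eps}(x)\to 0$ a.e. Lebesgue's dominated convergence theorem then yields $\int_\Omega W_{n,\eps}\,\diff x\to 0$ as $n\to\infty$. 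From the trivial bound $\bigl|\Psi(x,|f_n|)-\Psi(x,|g_n|)-\Psi(x,|f|)\bigr|\leq W_{n,\eps}(x)+\eps\,\Psi(x,|g_n|)$ and the uniform bound on $\int_\Omega\Psi(x,|g_n|)\,\diff x$, we obtain
\begin{equation*}
\limsup_{n\to\infty}\int_\Omega\bigl|\Psi(x,|f_n|)-\Psi(x,|g_n|)-\Psi(x,|f|)\bigr|\,\diff x\,\leq\,C\eps,
\end{equation*}
and letting $\eps\to 0^+$ yields \eqref{LeBL}. The main obstacle will be establishing the uniform pointwise inequality, since the variable exponents preclude a direct appeal to the classical scalar identity; once this inequality is secured, the rest of the argument is a routine dominated-convergence computation.
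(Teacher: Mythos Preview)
Your argument is correct and follows essentially the same route as the paper: the paper reduces \eqref{LeBL} to the two separate weighted-power limits and cites Willem's Lemma~1.32 for each, while you carry out that very Brezis--Lieb argument (pointwise $\eps$-inequality plus dominated convergence) directly for $\Psi$ as a whole. The only cosmetic difference is that you bundle the $w_1|t|^{\alpha(x)}$ and $w_2|t|^{\beta(x)}$ terms together rather than treating them one at a time; either way the proof rests on the same scalar inequality $\bigl||a+b|^{r(x)}-|b|^{r(x)}\bigr|\leq\eps|b|^{r(x)}+C_\eps|a|^{r(x)}$ with constants uniform in $x$.
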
	
\begin{proof}
	We have $f\in L^{\Psi}(\Omega)$ by Fatou's lemma. To show \eqref{LeBL}, it suffices to prove that  
	$$
	\lim_{n\to\infty}\int_{\Omega} \Big|w_1|f_n|^{\alpha(x)}
	-w_1|f_n-f|^{\alpha(x)}-w_1|f|^{\alpha(x)}\Big|\diff x=0$$
and
	$$
\lim_{n\to\infty}\int_{\Omega} \Big|w_2|f_n|^{\beta(x)}
-w_2|f_n-f|^{\beta(x)}-w_2|f|^{\beta(x)}\Big|\diff x=0.$$ The proof of these limits follows the lines of \cite[Proof of Lemma 1.32]{Willem}, so we omit it.
\end{proof}
%=============W^1,phi=================

The Musielak-Orlicz-Sobolev space  $W^{1,\Psi}(\Omega)$ is defined as
\begin{align*}
W^{1,\Psi}(\Omega):=\left\{u \in L^{\Psi}(\Omega):|\nabla u| \in L^{\Psi}(\Omega)\right\},
\end{align*}
endowed with the norm
\begin{align*}
\|u\|_{W^{1,\Psi}(\Omega)}:=\inf \left\{\tau>0: \hat{\rho}_\Psi \left(\frac{u}{\tau}\right) \leq 1\right\},
\end{align*}
where $\hat{\rho}_\Psi:W^{1,\Psi}(\Omega) \to \mathbb{R}$ is the modular defined as
\begin{align*}
\hat{\rho}_\Psi(u):=  \int_\Omega \big[\Psi(x,|\nabla u|)+\Psi(x,|u|)\big]\diff x.
\end{align*}
The space $W_0^{1,\Psi}(\Omega)$ is defined as the closure of $C_c^\infty(\Omega)$ in $W^{1,\Psi}(\Omega)$. Then, $W^{1,\Psi}(\Omega)$ and $W_0^{1,\Psi}(\Omega)$ are separable, uniformly convex and reflexive Banach spaces (see \cite[Theorem 6.1.4]{HH.Book}). Note that when $\Psi(x,t)=t^{\alpha(x)},$ the spaces $W^{1,\Psi}(\Omega)$ and $W_0^{1,\Psi}(\Omega)$ become the well-studied generalized Sobolev spaces $W^{1,\alpha(\cdot)}(\Omega)$ and $W_0^{1,\alpha(\cdot)}(\Omega)$, respectively (see \cite{Diening,fan2001sobolev,fanzhao2001}). 

The next proposition gives the relation between the modular $\hat{\rho}_\Psi$ and the norm $\|\cdot\|_{W^{1,\Psi}(\Omega)}$. We also omit the proof since it is similar to that of \cite[Proposition 2.14]{crespo2022new}.

\begin{proposition}\label{Prop_m-n_W(1,H)}  
For any $u \in W^{1,\Psi}(\Omega) $, we have
\begin{enumerate}
	\item[(i)]
	if $u\neq 0$, then $\|u\|_{W^{1,\Psi}(\Omega)}=\lambda$ if and only if $ \hat{\rho}_{\Psi}(\frac{u}{\lambda})=1$;
	\item[(ii)]
	$\|u\|_{ W^{1, \Psi}(\Omega)}<1$ (resp.\,$=1$, $>1$)  if and only if	$\hat{\rho}_ \Psi(u)<1$ (resp.\,$=1$, $>1)$;
	\item[(iii)]
	if $\|u\|_{ W^{1, \Psi}(\Omega)}<1 $, then $\|u\|^{\beta^+}_{ W^{1, \Psi}(\Omega)} \leq \hat{\rho}_ \Psi(u) \leq \|u\|^{\alpha^-}_{ W^{1, \Psi}(\Omega)}$;
	\item[(iv)]
	if $\|u\|_{ W^{1, \Psi}(\Omega)}>1 $, then $\|u\|^{\alpha^-}_{ W^{1, \Psi}(\Omega)} \leq \hat{\rho}_ \Psi(u) \leq \|u\|^{\beta^+}_{ W^{1, \Psi}(\Omega)}$.
\end{enumerate}
\end{proposition}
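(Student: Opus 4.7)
My plan for Proposition~\ref{Prop_m-n_W(1,H)} is to derive all four assertions from two elementary properties of the scaled modular $F(\tau):=\hat{\rho}_\Psi(u/\tau)$, namely its continuity and strict monotonicity in $\tau\in(0,\infty)$. This mirrors the Luxemburg-norm--modular technique already used for $L^{z(\cdot)}_\mu(\Omega)$ in Proposition~\ref{prop.norm-modular} and for $W^{1,\mathcal{H}}(\Omega)$ in \cite[Proposition 2.14]{crespo2022new}, so the structural skeleton of the argument is standard.

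The first step is to fix $u\in W^{1,\Psi}(\Omega)\setminus\{0\}$ and show that $F$ is continuous and strictly decreasing on $(0,\infty)$, with $F(\tau)\to 0$ as $\tau\to\infty$ and $F(\tau)\to+\infty$ as $\tau\to 0^+$. Strict monotonicity is immediate from the strict monotonicity of $\Psi(x,\cdot)$. For continuity at any $\tau_0>0$ I would invoke dominated convergence with the integrable majorant $\Psi(x,2|u|/\tau_0)+\Psi(x,2|\nabla u|/\tau_0)$ valid for $\tau\geq\tau_0/2$, which lies in $L^1(\Omega)$ precisely because $2u/\tau_0\in L^\Psi(\Omega)$. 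The boundary behaviour comes from monotone convergence as $\tau\to 0^+$ on the set where $u$ or $\nabla u$ does not vanish, and from dominated convergence as $\tau\to\infty$.

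With these properties in hand, the sublevel set $\{\tau>0:F(\tau)\leq 1\}$ is a closed half-line $[\lambda,\infty)$ with $\lambda\in(0,\infty)$ and $F(\lambda)=1$, which proves (i); (ii) is then read off by comparing $F(1)=\hat{\rho}_\Psi(u)$ with $F(\|u\|_{W^{1,\Psi}(\Omega)})=1$ via the monotonicity of $F$. For (iii) and (iv) I would exploit the explicit additive form $\Psi(x,t)=w_1(x)t^{\alpha(x)}+w_2(x)t^{\beta(x)}$: using $\alpha^-\leq\alpha(x)\leq\alpha^+\leq\beta^+$ and $\alpha^-\leq\beta(x)\leq\beta^+$ (where $\alpha^+\leq\beta^+$ follows from $\alpha(\cdot)<\beta(\cdot)$), a direct homogeneity computation gives the pointwise sandwich
$$\lambda^{-\beta^+}\Psi(x,t)\leq\Psi(x,t/\lambda)\leq\lambda^{-\alpha^-}\Psi(x,t)\quad\text{for } \lambda>1,$$
with the inequalities reversed when $0<\lambda<1$. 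Integrating over $\Omega$ for both $u$ and $\nabla u$ and substituting $\lambda=\|u\|_{W^{1,\Psi}(\Omega)}$ via (i) produces the power-norm bounds in (iii) and (iv).

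I do not anticipate any real obstacle. The only point requiring mild care is producing the dominating function that makes $F$ continuous on all of $(0,\infty)$, since the majorant has to be globally integrable near each $\tau_0$; that is exactly where the standing assumption $u\in L^\Psi(\Omega)$ (and $|\nabla u|\in L^\Psi(\Omega)$) enters, and is why I start there.
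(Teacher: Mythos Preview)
Your proposal is correct and follows exactly the standard Luxemburg-norm--modular argument that the paper has in mind: the paper itself omits the proof entirely, stating only that it ``is similar to that of \cite[Proposition 2.14]{crespo2022new}''. Your outline---strict monotonicity and continuity of $\tau\mapsto\hat{\rho}_\Psi(u/\tau)$ for (i)--(ii), followed by the pointwise homogeneity sandwich $\lambda^{-\beta^+}\Psi(x,t)\leq\Psi(x,t/\lambda)\leq\lambda^{-\alpha^-}\Psi(x,t)$ for (iii)--(iv)---is precisely that standard argument, and all steps go through without issue.
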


%===========CONTINUOUS EMBEDDING ================%

Next, we present a general domain version of the critical embedding for Musielak-Orlicz-Sobolev spaces associated with double phase functions  established in \cite{HW2022}.

For $(x,t)\in \overline{\Omega}\times [0,\infty)$, define
\begin{equation}\label{def_H}
\mathcal{H}(x,t):=t^{p(x)} +a(x)t^{q(x)} 
\end{equation}
and 
\begin{equation} \label{def_G*}
\mathcal{G}^\ast(x,t):= t^{p^\ast(x)} + a(x)^{\frac{q^\ast(x)}{q(x)}}t^{q^\ast(x)}.
\end{equation}
We make the following assumptions.
\begin{enumerate}
\item [$(\calbf{A})$] $N \geq 2$, $p,q \in C_+(\overline{\Omega}) \cap C^{0,1}(\overline{\Omega}),$    $p(x)<q(x)<N \,\text{ for all } x \in \overline{\Omega}$, 
$\displaystyle q(\cdot) \ll p(\cdot)\frac{N+1}{N},$ $0 \leq a(\cdot)\in L^\infty(\Omega) \cap C^{0,1}(\close)$.
\end{enumerate}
Note that the conditions on exponents $p,q$ in $(\calbf{A})$ imply
\begin{equation}\label{A10}
q(\cdot) \ll p^\ast(\cdot)
\end{equation}
and
\begin{equation}\label{A11}
\frac{Nq(\cdot)}{N-q(\cdot)+1} \ll p^\ast(\cdot).
\end{equation}

\begin{theorem} \label{T.CE1}
Let $(\calbf{A})$ be satisfied. Then the following embedding holds
\begin{align*}
	W^{1,\mathcal{H}}(\Omega) \hookrightarrow L^{\mathcal{G}^\ast}(\Omega).
\end{align*}
\end{theorem}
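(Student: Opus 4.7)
The plan is to establish the modular estimate
\begin{equation*}
\int_\Omega \mathcal{G}^\ast(x,|u|)\,\diff x \;\leq\; C\bigl(1+\hat{\rho}_\mathcal{H}(u)\bigr)^\gamma, \qquad u\in W^{1,\mathcal{H}}(\Omega),
\end{equation*}
with constants $C,\gamma>0$ depending only on the data in $(\calbf{A})$; via Propositions~\ref{prop.nor-mod.D} and \ref{Prop_m-n_W(1,H)} this delivers the asserted continuous embedding. The argument adapts the corresponding bounded-domain result of Ho--Winkert \cite{HW2022} to the setting of an open domain $\Omega$ enjoying the cone property. The hypothesis enters through the variable exponent Sobolev embedding $W^{1,p(\cdot)}(\Omega)\hookrightarrow L^{p^\ast(\cdot)}(\Omega)$, which is valid for such $\Omega$ whenever $p\in C^{0,1}(\close)$ (automatic log-H\"older continuity). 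Since $\mathcal{H}(x,t)\geq t^{p(x)}$, this immediately disposes of the $|u|^{p^\ast(x)}$ summand of $\mathcal{G}^\ast$, giving $\int_\Omega|u|^{p^\ast(x)}\,\diff x \leq C(1+\|u\|_{W^{1,\mathcal{H}}(\Omega)})^{(p^\ast)^+}$.

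The substantive work is bounding the weighted critical part
\begin{equation*}
\mathcal{I}(u):=\int_\Omega a(x)^{q^\ast(x)/q(x)}|u|^{q^\ast(x)}\,\diff x.
\end{equation*}
Following the strategy of \cite{HW2022}, I would exploit the pointwise factorization $a^{q^\ast/q}|u|^{q^\ast}=(a^{1/q(\cdot)}|u|)^{q^\ast(\cdot)}$ and aim to show that $v:=a^{1/q(\cdot)}u\in L^{q^\ast(\cdot)}(\Omega)$ with norm controlled by $\|u\|_{W^{1,\mathcal{H}}(\Omega)}$. A natural route is to verify $v\in W^{1,q(\cdot)}(\Omega)$ with
\begin{equation*}
\int_\Omega|\nabla v|^{q(x)}\,\diff x \;\leq\; C_1\hat{\rho}_\mathcal{H}(u) \;+\; C_2\int_\Omega|u|^{p^\ast(x)}\,\diff x + C_3,
\end{equation*}
and then apply the variable exponent Sobolev embedding $W^{1,q(\cdot)}(\Omega)\hookrightarrow L^{q^\ast(\cdot)}(\Omega)$. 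The formal chain rule
\begin{equation*}
\nabla v=a^{\frac{1}{q(\cdot)}}\nabla u+\frac{a^{\frac{1}{q(\cdot)}}}{q(\cdot)}\Bigl(\frac{\nabla a}{a}-\frac{\log a}{q(\cdot)}\nabla q\Bigr)u
\end{equation*}
distributes $a^{1/q(\cdot)}$ across $u$ and $\nabla u$; raising to the power $q(x)$ and applying Young's inequality \eqref{young}, the cross-terms would be reorganized into contributions bounded by $a(x)|u|^{q(x)}$ (available from $\hat{\rho}_\mathcal{H}(u)$) and by $|u|^{p^\ast(x)}$ (available from the first step).

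The hardest point is that $a^{1/q(\cdot)-1}$ is singular on the transition set $\{a=0\}$, precisely where $\mathcal{H}$ degenerates, so the chain-rule computation above is only formal. I would bypass this by regularization: replace $a$ by $a_\delta:=a+\delta$ (whence $\nabla a_\delta=\nabla a$ stays Lipschitz and $a_\delta\geq\delta>0$), run the argument for $v_\delta:=a_\delta^{1/q(\cdot)}u$, and pass to the limit $\delta\downarrow 0$ via Fatou's lemma on the modular side and monotone/dominated convergence on the control side. The crucial input here is the sharp growth constraint $q(\cdot)\ll p(\cdot)(N+1)/N$ of $(\calbf{A})$, whose consequence \eqref{A11} states $Nq(\cdot)/(N-q(\cdot)+1)\ll p^\ast(\cdot)$; this is exactly what ensures that the quantities generated by $\nabla a$ and $\nabla q$ have effective exponent at most $p^\ast(\cdot)$, so they can be absorbed by the two available controls with constants independent of $\delta$. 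The factor $|\log a_\delta|$ coming from $\nabla q$ is handled through the elementary bound $t^\alpha|\log t|^\beta\leq C_{\alpha,\beta,\eta}(1+t^{\alpha-\eta})$ valid for any small $\eta>0$.
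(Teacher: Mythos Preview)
Your route differs from the paper's and, as sketched, has a real gap at the $\nabla a$ term in the chain rule for $v_\delta=a_\delta^{1/q(\cdot)}u$. That term is of order $a_\delta^{1/q(x)-1}|\nabla a|\,|u|$, and after raising to the $q(x)$-th power it contributes $a_\delta^{\,1-q(x)}|\nabla a|^{q(x)}|u|^{q(x)}$ with a \emph{negative} exponent on $a_\delta$. Although $\nabla a=0$ a.e.\ on $\{a=0\}$, $|\nabla a|$ need not decay as one approaches that set (take $a(x)=|x_1|$ locally), so the singularity is genuine. No Young-type splitting can reorganize $a^{1-q}|u|^{q}$ into pieces dominated by $a|u|^{q}$ and $|u|^{p^\ast}$: one factor must inherit the negative power of $a$. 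A direct H\"older check with the model $a(x)=|x_1|$ shows that controlling $\int a^{1-q}|u|^q$ by $\||u|^q\|_{L^{p^\ast/q}}$ would require $(q-1)p^\ast/(p^\ast-q)<1$, i.e.\ $q<2p^\ast/(p^\ast+1)$, which is strictly stronger than the hypothesis $q<p(N+1)/N$ (e.g.\ $N=3$, $p=3/2$, any $q\in(3/2,2)$). Thus the constants in your estimate blow up as $\delta\downarrow 0$, and your appeal to \eqref{A11} is misplaced: that condition governs the exponent $Nq/(N-q+1)$, which only emerges from a Young inequality applied to a \emph{positive} power of $a$.

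The paper avoids separating the two phases. It sets $f:=\bigl[\mathcal{G}^\ast(x,u/\beta)\bigr]^{(N-1)/N}$ with $\beta=\|u\|_{\mathcal{G}^\ast}$ and applies only the classical $W^{1,1}(\Omega)\hookrightarrow L^{N/(N-1)}(\Omega)$ to $f$ (following \cite{fan2001sobolev}). Because $\mathcal{G}^\ast$ already carries $a^{q^\ast/q}$, differentiating and dividing by $(\mathcal{G}^\ast)^{1/N}$ leaves $a$ raised to the power $\tfrac{q^\ast}{q}\cdot\tfrac{N-1}{N}-1=\tfrac{q-1}{N-q}>0$; with a positive exponent the Young splitting \eqref{a.est} (now legitimately using \eqref{A11}) absorbs this into $\mathcal{G}^\ast(\cdot,u/\beta)$ and $\mathcal{H}(\cdot,u/\beta)$. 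Logarithmic terms from $\nabla p,\nabla q$ and from $a^{\ell}\log a$ are handled via $t^\varepsilon|\log t|\to 0$, and a three-step approximation (compactly supported bounded, then bounded, then general $u$) through Fatou's lemma completes the argument.
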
	

%=======================PROOF OF THE EMBEDDING========================%
\begin{proof}
We follow the idea of \cite[Lemma 2.1]{fan2001sobolev}. Throughout this proof, with the data given by $(\calbf{A})$ we denote by $C_i$ (resp. $C_i(t_0)$) a positive constant depending only on the data (resp. the data and $t_0$).

Clearly, the conclusion follows if we can show that
\begin{equation} \label{case3}
	\int_\Omega  \mathcal{G}^\ast(x,|u|)\diff x  \leq C_1 \left[\int_{\Omega}\mathcal{H}(x,|\nabla u|)\diff x +\int_{\Omega}\mathcal{H}(x, |u|)\diff x+1\right]^{{(q^\ast)}^+}, \quad \forall u \in W^{1,\mathcal{H}}(\Omega).
\end{equation}
We will prove \eqref{case3} by showing the following claims.
\medskip

\textbf{Claim  1:} \textit{\eqref{case3} holds for all $u \in W_c^{1,\mathcal{H}}(\Omega) \cap L^\infty(\Omega)$, where}
\[W^{1,\mathcal{H}}_{c}(\Omega):=\Big\{u \in W^{1,\mathcal{H}}(\Omega):\  \operatorname{supp} (u) \text { is compact}\Big\}.\]
To this end, let $u\in\left( W^{1,\mathcal{H}}_{c}(\Omega) \cap L^\infty(\Omega)\right)\setminus\{0\}$, and by using $|u|$ if necessary, we may assume that $u(\cdot) \geq 0$. We will obtain \eqref{case3} by making use of the classical  inequality: 
\begin{equation}\label{C-I}
			\|v\|_{L^{\frac{N}{N-1}}(\Omega)} \leq C_2 \int_{\Omega}\big(|\nabla v|+|v|\big)\diff x,\quad \forall v\in W^{1,1}(\Omega).
\end{equation} 
Set 
$$\beta:=\|u\|_{\mathcal{G}^\ast}\quad \text{and}\quad f(x):=\left[\mathcal{G}^\ast\left(x, \frac{u(x)}{\beta}\right)\right]^{\frac{N-1}{N}} \text { for a.a. } x \in \Omega.$$
Then it follows from Proposition~\ref{prop.nor-mod.D} that
\begin{equation}\label{int_G*}
	\int_{\Omega}\left[f(x)\right]^{\frac{N}{N-1}}\diff x=\int_{\Omega} \mathcal{G}^\ast \left(x, \frac{u(x)}{\beta}\right)\diff x = 1. 
\end{equation} 
Next, we verify that $f\in W^{1,1}(\Omega)$. It is clear that 
\begin{equation}\label{P.Thm2.6.f=0}
	f=|\nabla f|=0\quad \text{a.e. in } \{x \in \Omega:\ u(x)=0\}.
\end{equation} 
Set $\Omega_+:=\{x \in \Omega: u(x)>0\}$. Then, it holds that
\begin{align}\label{Nabla-f}
	\notag
	|\nabla f(x)| &\leq \frac{N-1}{N}\left(\mathcal{G}^\ast\left(x, \frac{u(x)}{\beta}\right)\right)^{-\frac{1}{N}} \left[ \,\left|\frac{\partial \mathcal{G}^\ast\left(x, \frac{u(x)}{\beta}\right)}{\partial t} \right|  \left|\frac{\nabla u}{\beta}\right| +  \sum_{i=1}^N\left|\frac{\partial \mathcal{G}^\ast\left(x, \frac{u(x)}{\beta}\right)}{\partial x_i} \right| \, \right]\\
	&\leq \frac{C_3}{\beta} I_1(x)+C_3I_2(x)\quad \text{for a.a. }x\in\Omega_+,
\end{align}					
where
$$I_1(x):=\left(\mathcal{G}^\ast\left(x, \frac{u(x)}{\beta}\right)\right)^{-\frac{1}{N}} \frac{\partial\mathcal{G}^\ast\left(x, \frac{u(x)}{\beta}\right)}{\partial t} \left|\nabla u\right|$$
and 
$$I_2(x):=\left(\mathcal{G}^\ast\left(x, \frac{u(x)}{\beta}\right)\right)^{-\frac{1}{N}} \sum_{i=1}^N\left|\frac{\partial \mathcal{G}^\ast\left(x, \frac{u(x)}{\beta}\right)}{\partial x_i} \right|. $$
We estimate $I_1(\cdot)$ and $I_2(\cdot)$ as follows. By  direct computations, it holds that
\begin{equation} \label{I1_1}
	\left|\frac{\partial \mathcal{G}^\ast (x,t)}{\partial t} \right|  \leq C_4\frac{\mathcal{G}^\ast (x,t)}{t},
\end{equation}	

\begin{equation} \label{I1_2}
	\ell(x)\frac{N-1}{N}=\frac{\ell(x)}{q'(x)}+\frac{1}{q(x)}\,  \text{ with } \, \ell(x):=\frac{q^\ast(x)}{q(x)},
\end{equation}

\begin{equation}\label{I1_3}
	\left(p^\ast(x)\frac{N-1}{N}-1\right)p'(x) = p^\ast(x),
\end{equation}
and
\begin{equation}\label{I1_4}
	\left(q^\ast(x)\frac{N-1}{N}-1\right)q'(x) = q^\ast(x) 
\end{equation}
for all $x \in \overline{\Omega} $ and $t \in (0,\infty)$.
From \eqref{I1_1} and the elementary inequality: 
$$|c+d|^{\frac{N}{N-1}} \leq C_5 \left(|c|^{\frac{N}{N-1}}+|d|^{\frac{N}{N-1}}\right), \quad \forall\ c,d \in \mathbb{R},$$
we obtain
\begin{align*}
	\notag
	I_1(x) & \leq C_6 \frac{\left(\mathcal{G}^\ast\left(x, \frac{u(x)}{\beta}\right)\right)^{\frac{N-1}{N}}}{\frac{u(x)}{\beta}}|\nabla u(x)| \\
	& \leq C_7 \left[ \left(\frac{u(x)}{\beta}\right)^{p^\ast(x)\frac{N-1}{N}-1} + (a(x))^{\ell(x)\frac{N-1}{N}}\left(\frac{u(x)}{\beta}\right)^{q^\ast(x)\frac{N-1}{N}-1}\right]\,|\nabla u(x)|\quad \text{for a.a. }x\in\Omega_+.
\end{align*}	
Then, by invoking \eqref{young}, we derive from \eqref{I1_2}-\eqref{I1_4} that
\begin{align*}
I_1(x) \leq C_8 &\left[ \left(\frac{u(x)}{\beta}\right)^{(p^\ast(x)\frac{N-1}{N}-1)p'(x)}+a(x)^{\ell(x)}\left(\frac{u(x)}{\beta}\right)^{(q^\ast(x)\frac{N-1}{N}-1)q'(x)}\right]\\&\quad+C_8 \round{ |\nabla u(x)|^{p(x)}+a(x)|\nabla u(x)|^{q(x)}}\quad \text{for a.a. }x\in\Omega_+,
\end{align*}
i.e,
\begin{equation}
	I_1(x) \leq C_8 \mathcal{G}^\ast\left(x,\frac{u(x)}{\beta}\right)  +  C_8\mathcal{H}(x,|\nabla u(x)|)\quad\text{for a.a. }x\in\Omega_+.\label{I1main}
\end{equation}
In order to estimate $I_2(\cdot)$, we make use of the Lipschitz continuity of $p(\cdot)$ and $q(\cdot)$ to obtain
\begin{equation}\label{I_2(x)}
	I_2(x) \leq  C_9I_{2,1}(x)+C_9I_{2,2}(x)\quad\text{for a.a.}~ x \in \Omega_+,  
\end{equation}
where
\begin{align*}
	I_{2,1}(x):&=\left[\mathcal{G}^\ast\left(x, \frac{u(x)}{\beta}\right)\right]^{-\frac{1}{N}}  \round{\left(\frac{u(x)}{\beta}\right)^{p^\ast(x)} + a(x)^{\ell(x)}\left(\frac{u(x)}{\beta}\right)^{q^\ast(x)}}\left|\ln \left(\frac{u(x)}{\beta}\right)\right|\\
	&= \left[\mathcal{G}^\ast\left(x, \frac{u(x)}{\beta}\right)\right]^{\frac{N-1}{N}}\left|\ln \left(\frac{u(x)}{\beta}\right)\right|
\end{align*}
and 
\begin{align*}
	I_{2,2}(x):= \left[\mathcal{G}^\ast\left(x, \frac{u(x)}{\beta}\right)\right]^{-\frac{1}{N}} \sum_{i=1}^N\Big|\frac{\partial (a(x)^{\ell(x)})}{\partial x_i}\Big|\,\left(\frac{u(x)}{\beta}\right)^{q^\ast(x)}.
\end{align*}
By $(\calbf{A})$, we have
\begin{equation*}
	\varepsilon_0:=\frac{1}{2}\inf_{ x \in\overline{\Omega}}\left[p^\ast(x) \frac{N-1}{N}-p(x)\right]>0.
\end{equation*}			
Hence, we obtain
\begin{align}\label{PT.CE1-L1}
	\notag
	\lim\limits_{t \to 0^+} \frac{[\mathcal{G}^\ast(x,t)]^{\frac{N-1}{N}}}{t^{p(x)+\varepsilon_0}}& = \lim\limits_{t \to 0^+} t^{p^\ast(x)\frac{N-1}{N}-p(x)-\varepsilon_0}\left(1+a(x)^{\ell(x)}t^{q^\ast(x)-p^\ast(x)}\right)^{\frac{N-1}{N}} \\
	& =0,\quad \text{uniformly for}\ \  x\in \overline{\Omega}.
\end{align}
From \eqref{PT.CE1-L1} and the fact that $\lim\limits_{t \to 0^+} t^{\varepsilon_0}\ln t=0$ it holds that
$$C_{10}(t_0):=\sup\limits_{0<t \leq t_{0} \atop x \in \overline{\Omega}} \left\{ \left[\mathcal{G}^\ast(x,t)\right]^{\frac{N-1}{N}}t^{-p(x)-\varepsilon_0} t^{\varepsilon_0}\ln t\right\} \in (0,\infty)$$
for each $t_0>0$. Consequently, we have
\begin{align} \label{I2.1}
	\notag I_{2,1}(x) &= \left[\mathcal{G}^\ast\left(x, \frac{u(x)}{\beta}\right)\right]^{\frac{N-1}{N}}\left(\frac{u(x)}{\beta}\right)^{-p(x)-\eps_0}\left(\frac{u(x)}{\beta}\right)^{\eps_0}\left|\ln \left(\frac{u(x)}{\beta}\right)\right|\left(\frac{u(x)}{\beta}\right)^{p(x)}\\
	&\leq C_{10}(t_0) \mathcal{H}\left(x,\frac{u(x)}{\beta}\right)\quad \text{for a.a. }x\in\Omega_+ \ \ \text{with}\ \  \frac{u(x)}{\beta}\leq t_0. 
\end{align}
Let us put $\delta:=\frac{1}{6C_2}$. By the fact that $\lim\limits_{t \to \infty} t^{-\frac{(p^*)^-}{N}}\ln t=0$ we can fix $t_0>1$ such that $t^{-\frac{p^\ast(x)}{N}}\ln t\leq t^{-\frac{(p^\ast)^-}{N}}\ln t \leq \frac{\delta}{C_9}$ for all $(x,t)  \in \overline{\Omega}\times (t_0,\infty).$ Thus, 
\begin{equation} \label{PT.CE1-L2}
	\mathcal{G}^\ast(x,t)^{\frac{N-1}{N}}|\ln t| \leq \mathcal{G}^\ast(x,t) t^{-\frac{p^\ast(x)}{N}}\ln t\leq \frac{\delta}{C_9} \mathcal{G}^\ast(x,t)\quad \text{for all}~ (x,t)  \in \overline{\Omega}\times (t_0,\infty).
\end{equation}
Consequently, it holds that
\begin{align} \label{I2.2}
	I_{2,1}(x) \leq \frac{\delta}{C_9} \mathcal{G}^\ast\left(x,\frac{u(x)}{\beta}\right)\quad \text{for a.a. }x\in\Omega_+ \ \ \text{with}\ \  \frac{u(x)}{\beta}>t_0. 
\end{align}
Combining \eqref{I2.1} with \eqref{I2.2} gives 
\begin{align} \label{Tm2.6-I2}
	I_{2,1}(x)\leq C_{10}(t_0) \mathcal{H}\left(x,\frac{u(x)}{\beta}\right)+\frac{\delta}{C_9} \mathcal{G}^\ast\left(x,\frac{u(x)}{\beta}\right)\quad \text{for a.a. }x\in\Omega_+. 
\end{align}
For estimating $I_{2,2}(\cdot)$, we denote
\begin{align*}
	\Omega_0:=\left\{x \in \Omega_+: \ a(x)=0\right\}\, \text{and}~ \Omega_1:=\left\{x \in \Omega_+: \ a(x)>0\right\}.
\end{align*}
Since $\frac{\partial \left(a^{\ell}\right)}{\partial x_i}=0$  a.e. in $\Omega_0$ for $i=1,\ldots, N$ and 
$$\Big[\mathcal{G}^\ast(x,t)\Big]^{-\frac{1}{N}} \leq a(x)^{-\frac{\ell(x)}{N}}t^{-\frac{q^\ast(x)}{N}} \quad \text{for a.a. } x \in \Omega_1 ~ \text{and all} ~ t \in (0,\infty),$$
it follows that
\begin{align*}
	I_{2,2}(x)=0, \quad \text{for a.a. }x\in\Omega_0,
\end{align*}
and 
\begin{align*}
	\notag I_{2,2}(x)& \leq a(x)^{-\frac{\ell(x)}{N}} a(x)^{\ell(x)}\,  \sum_{i=1}^N\left( \Big|\frac{\partial \ell}{\partial x_i} \Big||\ln a|+\frac{\ell}{a} \, \Big|\frac{\partial a}{\partial x_i} \Big| \right) \, \left(\frac{u(x)}{\beta}\right)^{q^\ast(x)\frac{N-1}{N}} \\
	&\leq C_{11} a(x)^{\ell(x)\frac{N-1}{N}-1}\left( \frac{u(x)}{\beta}\right)^{q^\ast(x)\frac{N-1}{N}}\quad \text{for a.a. }x\in\Omega_1,
\end{align*}
where 
$$C_{11}:=\sup_{x\in\overline{\Omega}}\, \sum_{i=1}^N\left( \Big|\frac{\partial \ell}{\partial x_i} \Big||\ln a|a+\ell \, \Big|\frac{\partial a}{\partial x_i} \Big| \right) \in (0,\infty)$$
 due to $(\calbf{A})$ and the fact that $\lim\limits_{t \to 0^+} t\ln t=0$. Thus, we obtain
\begin{align}\label{I2,2-1}
	I_{2,2}(x)\leq C_{11} a(x)^{\ell(x)\frac{N-1}{N}-1}\left( \frac{u(x)}{\beta}\right)^{q^\ast(x)\frac{N-1}{N}}\quad \text{for a.a. }x\in\Omega_+.
\end{align}
On the other hand, since $1\ll\ell(\cdot)\frac{N-1}{N}$ and $p(\cdot)\ll {q^\ast(\cdot)\frac{N-1}{N}}$ it holds that
\begin{align}\label{I2.Omega1.D1}
	a(x)^{\ell(x)\frac{N-1}{N}-1}t^{q^\ast(x)\frac{N-1}{N}} \leq C_{12}(t_0) \mathcal{H}\left(x,t\right)\quad \text{for all}~ (x,t)  \in \overline{\Omega}\times (0,t_0],
\end{align}
where 
\begin{equation*}
	C_{12}(t_0):=\sup\limits_{0<t \leq t_{0} \atop x \in \overline{\Omega}}a(x)^{\ell(x)\frac{N-1}{N}-1}t^{{q^\ast(x)\frac{N-1}{N}}-p(x)} \in (0,\infty).
\end{equation*}
Applying \eqref{young} for $r(x)=\frac{N}{q(x)-1}$, we obtain
\begin{align}\label{a.est}
	\notag
	a(x)^{\ell(x) \frac{N-1}{N}-1}t^{q^\ast(x)\frac{N-1}{N}} &=	a(x)^{\ell(x)\frac{q(x)-1}{N}} t^{q^\ast(x)\frac{q(x)-1}{N}}t^{q^\ast(x)\left(\frac{N-1}{N}-\frac{q(x)-1}{N}\right)}\\
	&	\leq \frac{\delta}{C_{11}} a(x)^{\ell(x)}t^{q^\ast(x)}+C_{13} t^{\frac{Nq(x)}{N-q(x)+1}} \quad \text{for all}~ (x,t)  \in \overline{\Omega}\times [0,\infty).
\end{align} 
From \eqref{A11} and taking $t_0$ larger if necessary, it holds that
\begin{equation}\label{J4.2}
	t^{\frac{Nq(x)}{N-q(x)+1}-p^\ast(x)}<\frac{\delta}{C_9C_{11}C_{13}} \quad \text{for all}~ (x,t)  \in \overline{\Omega}\times (t_0,\infty).
\end{equation}
Combining \eqref{a.est} and \eqref{J4.2} gives
\begin{align} \label{I2.D2}			
	\notag
	a(x)^{\ell(x) \frac{N-1}{N}-1}t^{q^\ast(x)\frac{N-1}{N}} &  \leq \frac{\delta}{C_9C_{11}} a(x)^{\ell(x)}t^{q^\ast(x)}+ C_{13}  t^{\frac{Nq(x)}{N-q(x)+1}-p^\ast(x)}t^{p^\ast(x)}\, \notag\\
	&\leq \frac{\delta}{C_9C_{11}} \mathcal{G}^\ast\left(x,t\right)\quad \text{for all}~ (x,t)  \in \overline{\Omega}\times (t_0,\infty).
\end{align}	
Utilizing \eqref{I2.Omega1.D1} and  \eqref{I2.D2}, we derive from \eqref{I2,2-1} that
\begin{equation} \label{I2.Omega+.2}
	I_{2,2}(x)   \leq C_{14}(t_0) \mathcal{H}\left(x,\frac{u(x)}{\beta}\right)  + \frac{\delta}{C_9} \mathcal{G}^\ast\left(x,\frac{u(x)}{\beta}\right)\quad \text{for a.a. }x\in\Omega_+.
\end{equation}
By using \eqref{Tm2.6-I2} and \eqref{I2.Omega+.2}, we deduce from \eqref{I_2(x)} that
\begin{equation}\label{I2main}
	I_2(x)\leq C_{15}(t_0) \mathcal{H}\left(x,\frac{u(x)}{\beta}\right)  + 2\delta\mathcal{G}^\ast\left(x,\frac{u(x)}{\beta}\right)\quad \text{for a.a. }x\in\Omega_+.
\end{equation}
Collecting  \eqref{P.Thm2.6.f=0}, \eqref{Nabla-f}, \eqref{I1main} and \eqref{I2main} altogether we obtain 
\begin{equation}\label{Grad-f}
	|\nabla f(x)|\leq \frac{C_{16}}{\beta}\left[\mathcal{G}^\ast\left(x,\frac{u(x)}{\beta}\right)+\mathcal{H}(x,|\nabla u(x)|)\right]+C_{15}(t_0)\mathcal{H}\left(x,\frac{u(x)}{\beta}\right)  + 2\delta\mathcal{G}^\ast\left(x,\frac{u(x)}{\beta}\right)
\end{equation}
for a.a. $x\in\Omega$. 

\vskip5pt
In order to estimate $|f|$, we argue as those leading to \eqref{I2.1} and \eqref{PT.CE1-L2} to obtain for a $t_0$ larger if necessary that
\begin{align*}
	\mathcal{G}^\ast\left(x, t\right)^{\frac{N-1}{N}}=  \mathcal{G}^\ast\left(x, t\right)^{\frac{N-1}{N}} t^{-p(x)}t^{p(x)}\leq C_{17}(t_0)\mathcal{H}\left(x,t\right)\quad \text{for all}~ (x,t)  \in \overline{\Omega}\times (0,t_0],
\end{align*} 
where $$C_{17}(t_0):= \sup\limits_{t \in (0,t_0] \atop x \in \overline{\Omega}} \mathcal{G}^\ast\left(x, t\right)^{\frac{N-1}{N}} t^{-p(x)} \in (0,\infty)$$
and
\begin{align*} 
	\notag
	\mathcal{G}^\ast\left(x, t\right)^{\frac{N-1}{N}}& \leq \mathcal{G}^\ast\left(x, t\right)t^{-\frac{p^\ast(x)}{N}}\leq \delta \mathcal{G}^\ast\left(x, t\right)\quad \text{for all}~ (x,t)  \in \overline{\Omega}\times (t_0,\infty).
\end{align*}
From these facts and \eqref{P.Thm2.6.f=0}, we obtain
\begin{align} \label{f}
	|f(x)|=\left[\mathcal{G}^\ast\left(x, \frac{u(x)}{\beta}\right)\right]^{\frac{N-1}{N}}\leq C_{17}(t_0) \mathcal{H}\left(x,\frac{u(x)}{\beta}\right)  + \delta\mathcal{G}^\ast\left(x, \frac{u(x)}{\beta}\right) \ \ \text{for a.a. }x\in\Omega.
\end{align} 
By \eqref{Grad-f} and \eqref{f}, we infer $f\in W^{1,1}(\Omega)$. Then, applying \eqref{C-I} for $v=f$ and then using \eqref{int_G*}, \eqref{Grad-f}, \eqref{f}, we arrive at
\begin{align*}
	1 \leq \frac{C_{18}}{\beta} \left[1+\int_{\Omega} \mathcal{H}(x,|\nabla u(x)|)\diff x \right]+ C_{19}(t_0)\int_\Omega \mathcal{H}\left(x,\frac{u(x)}{\beta}\right)\diff x+ 3C_2\delta.
\end{align*}
Note that $\delta=\frac{1}{6C_2}$, and we could choose $C_{19}(t_0)$ to depend only on the data. Hence, for $\beta \geq 1$, the last inequality implies that
\begin{align*}
	\frac{1}{2} \leq \frac{C_{20}}{\beta} \left[1+\int_{\Omega} \mathcal{H}(x,|\nabla u(x)|)\diff x \right]+ \frac{C_{20}}{\beta^{p^-}}\int_\Omega \mathcal{H}\left(x,\frac{u(x)}{\beta}\right)\diff x.
\end{align*}
So, we obtain
\begin{align} \label{lambda.ast.2}
	\beta=\|u\|_{\mathcal{G}^\ast} \leq (1+2C_{20})\left[ 1 +  \int_\Omega \mathcal{H}(x,|\nabla u|)\diff x+\int_\Omega \mathcal{H}(x,u)\diff x\right],\quad \forall\beta\geq 1,
\end{align}
Obviously, \eqref{lambda.ast.2} holds with $\|u\|_{\mathcal{G}^\ast}<1$. Thus, we obtain \eqref{lambda.ast.2} for all $u \in W_c^{1,\mathcal{H}}(\Omega) \cap L^\infty(\Omega)$. This fact and Proposition~\ref{prop.nor-mod.D} yield Claim 1.

\medskip

\textbf{Claim  2:} \textit{\eqref{case3} holds for all $u \in W^{1,\mathcal{H}}(\Omega) \cap L^\infty(\Omega)$.}

\medskip
\noindent Indeed, let $u \in W^{1,\mathcal{H}}(\Omega) \cap L^\infty(\Omega)$ be given and arbitrary. For each $n\in\N$, let $\psi_n \in C_c^\infty(\RN)$ be such that
$$\begin{cases}
	\psi_n(x)=1 & \text{ if } |x| \leq n,\\
	\psi_n(x)=0 & \text{ if } |x|>3n,\\
	\psi_n(x)\in [0,1] & \text{ for all } x \in \RN,\\
	|\nabla \psi_n(x)|\leq 1 & \text{ for all } x \in \RN
\end{cases}$$
and define $u_n(x):=\psi_n(x)u(x)$ for $x\in\Omega$. Clearly, $u_n$ has compact support, and furthermore, $|u_n| \leq |u|$ and $|\nabla u_n| \leq |u|+|\nabla u|$ a.e. in $\Omega$. Thus, $u_n \in W^{1,\mathcal{H}}_{c}(\Omega)\cap L^{\infty}(\Omega)$; hence, by Claim 1 we have
\begin{equation*}
	\int_\Omega \mathcal{G}^\ast(x,|u_n|)\diff x \leq C_{1}\left[\int_{\Omega}\mathcal{H}(x,|\nabla u|)\diff x +\int_{\Omega}\mathcal{H}(x,|u|)\diff x+1\right]^{{(q^\ast)}^+}.
\end{equation*}		
Since $u_n \to u$ a.e. in $\Omega$ as $n \to \infty$, by passing to the limit as $n\to\infty$ in the last inequality we obtain
\begin{align}\label{case2}
	\int_\Omega\mathcal{G}^\ast(x,|u|)\diff x \leq C_{1}\left[\int_{\Omega}\mathcal{H}(x,|\nabla u|)\diff x +\int_{\Omega}\mathcal{H}(x, |u|)\diff x+1\right]^{{(q^\ast)}^+}
\end{align}			
in view of Fatou's lemma. That is, Claim 2 has been proved.

\medskip
\textbf{Claim  3:} \textit{\eqref{case3} holds for all $u \in W^{1,\mathcal{H}}(\Omega)$.}

\medskip
\noindent Indeed, let $u \in W^{1,\mathcal{H}}(\Omega)$ be given and arbitrary. For each $n\in\N$, define
\begin{align*}
	v_{n}(x)= \begin{cases}u(x) & \text { if }|u(x)| \leq n, \\ n \operatorname{sgn} u(x) & \text { if }|u(x)|>n.\end{cases}
\end{align*} 			
It is clear that $|v_n|=\min\left\{|u|, n\right\}$ and $|\nabla v_n|\leq |\nabla u|$ a.e. in $\Omega$; hence, $v_n \in W^{1,\mathcal{H}}(\Omega) \cap L^\infty(\Omega)$.  
Thus, by means of Claim 2 it holds that
\begin{align*} 
	\int_\Omega \mathcal{G}^\ast(x,|v_n|)\diff x  & \leq C_1\left[\int_{\Omega}\mathcal{H}(x,|\nabla v_n|)\diff x  + \int_{\Omega}\mathcal{H}(x, |v_n|)\diff x+1\right]^{{(q^\ast)}^+}\\
	&\leq C_1\left[\int_{\Omega}\mathcal{H}(x,|\nabla u|)\diff x  + \int_{\Omega}\mathcal{H}(x, |u|)\diff x+1\right]^{{(q^\ast)}^+}.
\end{align*}
By passing to the limit as $n\to\infty$ in the last estimate, noticing $v_n \to u$ a.e. in  $\Omega$, Claim 3 follows in view of Fatou's lemma again. The proof is complete.
\end{proof}

\begin{remark}\rm
When $a(\cdot) \equiv 0$, Theorem \ref{T.CE1} becomes the critical embedding for $W^{1,p(\cdot)}(\Omega)$, which was proved in \cite[Lemma 2.1]{fan2001sobolev}.
\end{remark}

In applications, we will employ Theorem~\ref{T.CE1} in a more general form as follows. Define
\begin{equation}\label{B}
	\mathcal{B}(x,t) :=c_1(x) t^{r(x)} + c_2(x) a(x)^{\frac{s(x)}{q(x)}}t^{s(x)} \text{ for a.a. } x \in \Omega \text{ and all }t\in [0,\infty),
\end{equation}
where  $r,s\in C_+(\overline{\Omega})$ and $c_1,c_2\in M(\Omega) $ satisfying $c_1(x)>0$ and $c_2(x) \geq 0$ for a.a. $x \in \Omega$.

\begin{theorem} \label{T.CE2}
Let $(\calbf{A})$ hold, and let $\mathcal{B}$ be defined as in \eqref{B} with $c_1,c_2\in L^\infty(\Omega)$.  Then, the following assertions hold:
\begin{enumerate}
	\item[(i)] if  $ p(x)\leq  r(x) \leq p^\ast(x)$ and $q(x) \leq s(x) \leq q^\ast(x)$ for all $x\in \overline{\Omega}$, then	$W^{1,\mathcal{H}}(\Omega) \hookrightarrow L^{\mathcal{B}}(\Omega)$;
	\item[(ii)] if $\Omega$ is a bounded Lipschitz domain,  $r(x)<p^\ast(x),$ and  $s(x)<q^\ast(x)$ for all $x \in \overline{\Omega}$, then 	$W^{1,\mathcal{H}}(\Omega) \hookrightarrow \hookrightarrow L^{\mathcal{B}}(\Omega)$.
\end{enumerate} 
\end{theorem}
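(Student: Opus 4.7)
The strategy for part (i) is to dominate $\mathcal{B}$ pointwise by $\mathcal{H}+\mathcal{G}^*$. Since $p(x)\leq r(x)\leq p^*(x)$, one has $t^{r(x)}\leq t^{p(x)}+t^{p^*(x)}$ for every $t\geq 0$. For the weighted summand, setting $\theta(x):=\frac{q^*(x)-s(x)}{q^*(x)-q(x)}\in[0,1]$, an exponent check gives the factorization
\[
a(x)^{\frac{s(x)}{q(x)}}t^{s(x)} = \bigl(a(x)\, t^{q(x)}\bigr)^{\theta(x)}\bigl(a(x)^{\frac{q^*(x)}{q(x)}}t^{q^*(x)}\bigr)^{1-\theta(x)},
\]
so AM--GM yields $a^{s/q}t^s\leq a t^q + a^{q^*/q} t^{q^*}$. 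Combined with $c_1,c_2\in L^\infty(\Omega)$, this delivers $\mathcal{B}(x,t)\leq C\bigl[\mathcal{H}(x,t)+\mathcal{G}^*(x,t)\bigr]$. Integrating, and then controlling $\int_\Omega\mathcal{H}(x,|u|)\,dx$ by $\|u\|_{W^{1,\mathcal{H}}(\Omega)}$ through Proposition~\ref{Prop_m-n_W(1,H)} and $\int_\Omega\mathcal{G}^*(x,|u|)\,dx$ by the same norm through Theorem~\ref{T.CE1}, finally passing from the modular to the Luxemburg norm via rescaling and Proposition~\ref{prop.nor-mod.D}, produces $\|u\|_\mathcal{B}\leq C\|u\|_{W^{1,\mathcal{H}}(\Omega)}$.

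For part (ii), let $\{u_n\}$ be bounded in $W^{1,\mathcal{H}}(\Omega)$. Using $W^{1,\mathcal{H}}(\Omega)\hookrightarrow W^{1,p(\cdot)}(\Omega)$ together with the standard compact embedding $W^{1,p(\cdot)}(\Omega)\hookrightarrow\hookrightarrow L^{\sigma(\cdot)}(\Omega)$ valid for every $\sigma\ll p^*$ on a bounded Lipschitz domain, I would pass to a subsequence with $u_n\to u$ strongly in $L^{r(\cdot)}(\Omega)\cap L^{q(\cdot)}(\Omega)$ and pointwise a.e.; Theorem~\ref{T.CE1} also furnishes a uniform bound $M:=\sup_n\int_\Omega a^{q^*/q}|u_n-u|^{q^*(x)}\,dx<\infty$. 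The $c_1$-part of the modular $\int_\Omega c_1|u_n-u|^{r(x)}\,dx$ then vanishes by strong $L^{r(\cdot)}$-convergence and $c_1\in L^\infty(\Omega)$, leaving only the $c_2$-part to treat.

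That final step I would resolve via Vitali's convergence theorem. Pointwise convergence to zero is automatic; equi-integrability relies on a weighted Young inequality. Because $s<q^*$ strictly and both are continuous on $\overline{\Omega}$, $\theta$ is bounded below by some $\theta_0>0$, so Young applied to the factorization above gives, for every $\varepsilon>0$, a constant $C_\varepsilon>0$ with
\[
a(x)^{\frac{s(x)}{q(x)}}t^{s(x)} \leq \varepsilon\, a(x)^{\frac{q^*(x)}{q(x)}}t^{q^*(x)} + C_\varepsilon\bigl(1+a(x)\, t^{q(x)}\bigr)
\]
for every $(x,t)\in\overline{\Omega}\times[0,\infty)$; the additive $1$ absorbs the subregion $\{s<q\}$, on which the direct estimate $a^{s/q}t^s=(at^q)^{s/q}\leq 1+at^q$ applies. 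For any measurable $E\subset\Omega$ this yields
\[
\int_E a^{s/q}|u_n-u|^{s(x)}\,dx\leq \varepsilon M + C_\varepsilon\Bigl(|E|+\|a\|_\infty\int_E |u_n-u|^{q(x)}\,dx\Bigr),
\]
and choosing $\varepsilon$ small while exploiting the equi-integrability of $\{|u_n-u|^{q(\cdot)}\}$ in $L^1(\Omega)$ (a consequence of its strong $L^1$-convergence to zero) makes the right-hand side uniformly small as $|E|\to 0$. Vitali then produces $\int_\Omega c_2 a^{s/q}|u_n-u|^{s(x)}\,dx\to 0$, hence $\rho_\mathcal{B}(u_n-u)\to 0$, and finally $u_n\to u$ in $L^\mathcal{B}(\Omega)$ via Proposition~\ref{prop.nor-mod.D}(v). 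The principal obstacle is precisely this equi-integrability of the weighted critical term: since $a$ may vanish on parts of $\Omega$ and $s$ is variable, no direct compact embedding into a fixed Lebesgue space is available, and the $\mathcal{H}$--$\mathcal{G}^*$ interpolation is what rescues the Vitali argument.
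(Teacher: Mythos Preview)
Your proof of part~(i) is correct and follows the same strategy as the paper: dominate $\mathcal{B}$ pointwise by $C[\mathcal{H}+\mathcal{G}^*]$ and invoke Theorem~\ref{T.CE1}. The paper's handling of the weighted term is marginally simpler than yours: it writes $a(x)^{s(x)/q(x)}t^{s(x)}=\bigl(a(x)^{1/q(x)}t\bigr)^{s(x)}$ and uses the elementary inequality $\tau^{s}\leq\tau^{q}+\tau^{q^*}$ (valid because $q\leq s\leq q^*$) directly, without the interpolation identity and AM--GM, but the two routes are equivalent and yield the same bound.

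For part~(ii) the paper gives no argument at all; it simply cites \cite[Proposition~3.7]{HW2022}. Your self-contained Vitali argument is therefore a genuine alternative, and it is correct. The compact embedding $W^{1,p(\cdot)}(\Omega)\hookrightarrow\hookrightarrow L^{q(\cdot)}(\Omega)$ (available since $q\ll p^*$ by \eqref{A10}) supplies the equi-integrability of $\{|u_n-u|^{q(\cdot)}\}$, the critical embedding of Theorem~\ref{T.CE1} supplies the uniform bound $M$ on $\int_\Omega a^{q^*/q}|u_n-u|^{q^*}\,dx$, and your $\varepsilon$-Young inequality (with uniform $C_\varepsilon$ thanks to $\theta\geq\theta_0>0$ on the compact set $\overline{\Omega}$, together with the separate elementary bound on $\{s<q\}$) closes the loop. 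Your proof is considerably more informative than the paper's bare citation and has the advantage of being self-contained within the present framework.
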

\begin{proof}
Suppose that $ p(x)\leq  r(x) \leq p^\ast(x)$ and $q(x) \leq s(x) \leq q^\ast(x)$ for all $x\in \overline{\Omega}$. Then, it holds 
\begin{align*}
	\mathcal{B}(x,t)& = c_1(x)t^{r(x)} + c_2(x) \left(a(x)^{\frac{1}{q(x)}}t\right)^{s(x)} \\
	&\leq c_1(x) \left(t^{p(x)}+t^{p^\ast(x)}\right) + c_2(x) \left[\left(a(x)^{\frac{1}{q(x)}}t\right)^{q(x)}+ \left(a(x)^{\frac{1}{q(x)}}t\right)^{q^\ast(x)}\right] \\
	&\leq C\left[ \mathcal{H}(x,t) + \mathcal{G}^\ast(x,t)\right],\quad \forall (x,t) \in \overline{\Omega} \times [0,\infty).
\end{align*}
From this and Theorem \ref{T.CE1}, we easily  obtain $(i)$. The assertion  $(ii)$ is from \cite[Proposition 3.7]{HW2022}.
\end{proof}

%=============Solution spaces==============================

\subsection{The Musielak-Orilcz-Sobolev spaces $W^{1,\mathcal{H}}_V(\RN)$ and $X_V$}${}$ \label{subsec.sol.space}

\vskip5pt

In this subsection, we define and explore Musielak-Orlicz-Sobolev spaces associated with the double phase operator given in \eqref{oper}. In the sequel, let $\mathcal{H}$ be given by \eqref{def_H} and satisfy $(\calbf{A})$ with $\Omega=\R^N$, and let $V\in L^1_{\loc}\left(\R^N\right)$ be such that $V(\cdot) \geq 0$ and $V\ne 0$. We define the space $W^{1,\mathcal{H}}_V(\RN)$ as
\begin{align*}
W^{1,\mathcal{H}}_V(\RN):=\left\{u\in W_{\loc}^{1,1}(\RN):~ \rho_V(u)<\infty\right\},
\end{align*}
where the modular $\rho_V$ is defined as
\begin{align*}
\rho_V(u):=\int_{\RN} \mathcal{H}\left(x,|\nabla u|\right) \diff x + \int_{\RN} V(x)\mathcal{H}\left(x,|u|\right) \diff x\quad\text{for}\ u\in W_{\loc}^{1,1}(\RN).
\end{align*}
Then, $W^{1,\mathcal{H}}_V(\RN)$ is a normed space with the norm
\begin{align}\label{norm}
\|u\|:=\inf\left\{ \tau >0: \rho_V\left(\frac{u}{\tau}\right) \leq 1\right\},
\end{align}
see, e.g. \cite[Theorem 2.1.7]{Diening}. As Proposition~\ref{Prop_m-n_W(1,H)}, on this space we have
\begin{equation}\label{modular-norm.XV}
	 \rho_V\left(\frac{u}{\|u\|}\right)=1, \quad \forall u \in W_V^{1,\mathcal{H}}(\RN) \setminus \{0\},
\end{equation}
and
\begin{equation}\label{m-n}
\min \left\{\|u\|^{p^-},\|u\|^{q^+}\right\}\leq \rho_V(u)\leq \max \left\{\|u\|^{p^-},\|u\|^{q^+}\right\}, \quad \forall u \in W_V^{1,\mathcal{H}}(\RN).
\end{equation}

Clearly, if $\essinf_{x \in \R^N}V(x)>0$, then $W^{1,\mathcal{H}}_V(\RN)$ is a separable reflexive Banach space (a proof is similar to that of Theorem~\ref{T.Xv-P} below). Furthermore, it holds that
\begin{equation}\label{Em}
W^{1,\mathcal{H}}_V(\RN)\hookrightarrow W^{1,\mathcal{H}}(\RN),
\end{equation}
i.e., there exists a constant $C>0$ such that
\begin{equation}\label{infV>0}
\|u\|_{W^{1,\mathcal{H}}(\RN)}\leq C\|u\|,\quad \forall u\in W^{1,\mathcal{H}}_V(\RN). 
\end{equation}
In order to obtain \eqref{Em} with a larger class of potentials $V$, we make the following assumption:
\begin{enumerate} 
\item  [$(\calbf{V}_1)$]   $V\in L^1_{\loc}\left(\R^N\right)$ satisfies $V(\cdot) \geq 0$ and $V\ne 0$, and one of the following two conditions holds:
\begin{enumerate}
	\item[$(a)$]  there exists $K_0>0$ such that the set $E_V:=\left\{x \in \mathbb{R}^{N}: V(x)<K_0\right\} \neq \emptyset$  and $|E_V|<\infty$;
	\item[$(b)$]  there exists $R_0>0$ such that $\essinf_{x \in B^c_{R_0}}V(x)=V_0>0$.
\end{enumerate}
\end{enumerate}
\begin{remark}\label{Rmk.WH}\rm
The condition $(\calbf{V}_1)(a)$  was initially introduced in \cite{BW.95} for the case $\mathcal{H}(x,t)=t^2$. Obviously, this condition does not cover the case of constant potentials, and the alternative condition  $(\calbf{V}_1)(b)$ complements this deficiency.
\end{remark} 
The condition $V\in L^1_{\loc}\left(\R^N\right)$ guarantees that $C_c^\infty(\R^N)\subset W_V^{1,\mathcal{H}}(\RN)$. Moreover, we have the following.
\begin{lemma}\label{Le-IC}
Let $(\calbf{A})$ and $(\calbf{V}_1)$ hold. Furthermore, for the case of $\essinf_{x \in \R^N}V(x)=0$, we assume in addition that 
\begin{enumerate}
	\item[$(\calbf{P})$] The function $p$ satisfies the $\log-$H\"oder decay condition, i.e., there exists $p_\infty \in (1,N)$ such that 
	\begin{align*}
		\sup_{x \in \RN} |p(x)-p_\infty|\log(e+|x|)<\infty.
	\end{align*}
\end{enumerate}Then, there exists a constant $C>0$ such that
\begin{equation}\label{Xemb1}
	\|u\|_{W^{1,\mathcal{H}}(\RN)}\leq C\|u\|,\quad \forall u\in C_c^\infty(\R^N). 
\end{equation}
\end{lemma}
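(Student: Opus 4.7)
The plan is to reduce the claim $\|u\|_{W^{1,\mathcal{H}}(\R^N)}\le C\|u\|$ to producing a linear bound $\|u\|_{L^\mathcal{H}(\R^N)}\le C\|u\|$, since the inequality $\rho_V(u)\ge \int_{\R^N}\mathcal{H}(x,|\nabla u|)\,dx$ combined with the modular--norm relations in Proposition~\ref{prop.nor-mod.D} immediately yields $\|\nabla u\|_{L^\mathcal{H}(\R^N)}\le \|u\|$. By the homogeneity of Luxemburg norms, it suffices to produce an absolute constant $M$ with $\|u\|_{L^\mathcal{H}(\R^N)}\le M$ whenever $\rho_V(u)\le 1$, and extend by rescaling $u\mapsto u/\|u\|$. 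I would then split $\R^N=\mathcal{R}_1\cup\mathcal{R}_2$ so that $V\ge V_\ast>0$ on $\mathcal{R}_1$: take $\mathcal{R}_1=B^c_{R_0}$, $V_\ast=V_0$ in case $(\calbf{V}_1)(b)$, or $\mathcal{R}_1=E_V^c$, $V_\ast=K_0$ in case $(\calbf{V}_1)(a)$. On $\mathcal{R}_1$ the estimate is immediate from
$$\int_{\mathcal{R}_1}\mathcal{H}(x,|u|)\,dx\le V_\ast^{-1}\int_{\mathcal{R}_1}V(x)\mathcal{H}(x,|u|)\,dx\le V_\ast^{-1}\rho_V(u)\le V_\ast^{-1},$$
so $\|u\|_{L^\mathcal{H}(\mathcal{R}_1)}$ is uniformly bounded by Proposition~\ref{prop.nor-mod.D}.

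For case $(b)$, the bad region $\mathcal{R}_2=B_{R_0}$ is a bounded Lipschitz domain. I would pick a cutoff $\eta\in C_c^\infty(B_{R_0+1})$ with $\eta\equiv 1$ on $B_{R_0}$ and $|\nabla\eta|\le C$, so that $u\eta\in W^{1,\mathcal{H}}_0(B_{R_0+1})$. A Poincar\'e inequality on $W^{1,\mathcal{H}}_0(B_{R_0+1})$---obtained by a standard contradiction argument from the compact embedding furnished by Theorem~\ref{T.CE2}(ii) with $\mathcal{B}=\mathcal{H}$---together with the product rule $|\nabla(u\eta)|\le |\nabla u|+C|u|\chi_{B_{R_0+1}\setminus B_{R_0}}$ gives
$$\|u\|_{L^\mathcal{H}(B_{R_0})}\le \|u\eta\|_{L^\mathcal{H}(B_{R_0+1})}\le C\bigl(\|\nabla u\|_{L^\mathcal{H}(\R^N)}+\|u\|_{L^\mathcal{H}(B_{R_0+1}\setminus B_{R_0})}\bigr),$$
where the annulus $B_{R_0+1}\setminus B_{R_0}$ lies inside $\mathcal{R}_1$ and so is already under control. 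Note that this argument uses only local information, and does not require $(\calbf{P})$.

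The main obstacle is case $(a)$: here $E_V$ may be unbounded despite having finite measure, so the cutoff trick is unavailable and a genuinely global argument is needed. To treat it, I would apply H\"older's inequality (Proposition~\ref{prop.Holder}) on $E_V$ with the conjugate pairs $(p^\ast/p,(p^\ast/p)')$ and $(p^\ast/q,(p^\ast/q)')$---legitimate by $p(\cdot)\ll p^\ast(\cdot)$ and $q(\cdot)\ll p^\ast(\cdot)$ from $(\calbf{A})$, cf.~\eqref{A10}---combined with Corollary~\ref{Cor-n} and $a\in L^\infty(\R^N)$, to obtain
$$\int_{E_V}\mathcal{H}(x,|u|)\,dx\le C(|E_V|,\|a\|_\infty)\bigl(1+\|u\|_{L^{p^\ast(\cdot)}(\R^N)}^{q^+}\bigr),$$
the finite-measure factor $\|\chi_{E_V}\|_{L^{(p^\ast/p)'}(E_V)}$ (and its $q$-analogue) being absorbed into the constant. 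The closing step is the global variable-exponent Sobolev inequality $\|u\|_{L^{p^\ast(\cdot)}(\R^N)}\le C\|\nabla u\|_{L^{p(\cdot)}(\R^N)}$ for $u\in C_c^\infty(\R^N)$, established via the Riesz-potential pointwise bound $|u|\le CI_1(|\nabla u|)$ together with the boundedness $I_1\colon L^{p(\cdot)}(\R^N)\to L^{p^\ast(\cdot)}(\R^N)$; this is precisely where the log-H\"older decay assumption $(\calbf{P})$ is indispensable, since without decay control of $p(\cdot)$ at infinity the Riesz potential fails to be bounded across the whole of $\R^N$. Once this inequality is in hand, the right-hand side is bounded by $C(1+\|u\|^{q^+})$, which is controlled for $\|u\|\le 1$, completing the reduction.
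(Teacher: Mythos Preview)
Your proof is correct. For case $(a)$ your argument coincides with the paper's: both split $\int_{\R^N}\mathcal{H}(x,|u|)\,dx$ along $E_V$, control the $E_V^c$ piece by the potential term in $\rho_V$, bound the $E_V$ piece via H\"older against $\|u\|_{L^{p^\ast(\cdot)}}$ (using $|E_V|<\infty$), and close with the global Sobolev inequality $\|u\|_{L^{p^\ast(\cdot)}(\R^N)}\le C\|\nabla u\|_{L^{p(\cdot)}(\R^N)}$ for $u\in C_c^\infty(\R^N)$, which is exactly where $(\calbf{P})$ enters. The normalization $u\mapsto u/\|u\|$ is also identical.

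For case $(b)$ you take a genuinely different route. The paper simply recycles the case-$(a)$ argument verbatim, replacing $E_V$ by $B_{R_0}$ and $K_0$ by $V_0$, and therefore again invokes the global Sobolev inequality and hence $(\calbf{P})$. Your cutoff-plus-Poincar\'e argument instead exploits that the bad region $B_{R_0}$ is bounded: you localize by $\eta$, apply a Poincar\'e inequality on $W^{1,\mathcal{H}}_0(B_{R_0+1})$ (obtainable from the compact embedding of Theorem~\ref{T.CE2}\,(ii), which needs only $(\calbf{A})$), and handle the annular remainder via the $\mathcal{R}_1$ estimate. This is more elementary and, as you observe, shows that $(\calbf{P})$ is actually superfluous in case $(b)$ --- a sharpening the paper's uniform treatment does not reveal. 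The trade-off is that the paper's proof is one line (``same as above''), while yours requires setting up Poincar\'e and the cutoff.
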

\begin{proof}
It is clear that the conclusion holds for the case of $\essinf_{x \in \R^N}V(x)>0$ in view of \eqref{infV>0}. For the case of $\essinf_{x \in \R^N}V(x)=0$ with $(\calbf{A})$ and $(\calbf{P})$ being assumed, we note that
\begin{equation}\label{S.bar}
	\bar{S}:=\inf_{u \in C_c^\infty(\RN)\setminus \{0\}}\frac{\norm{|\nabla u|}_{L^{p(\cdot)}(\RN)}}{\|u\|_{L^{p^\ast(\cdot)}(\RN)}}>0,
\end{equation}
see \cite[Theorem 8.3.1]{Diening}. First, we consider the case of   $(\calbf{V}_1)(a)$. Let $u \in C_c^\infty(\RN)$, we have
\begin{align}
	\notag
	\int_{\RN} \mathcal{H}(x,|u|) \diff x & = \int_{E_V} \mathcal{H}(x,|u|)\diff x + \int_{E_V^c}  \mathcal{H}(x,|u|)\diff x \\
	& \leq \int_{E_V} \mathcal{H}(x,|u|)\diff x + \frac{1}{K_0} \int_{E_V^c}  V(x) \mathcal{H}(x,|u|)\diff x \notag \\
	& \leq \int_{E_V} \mathcal{H}(x,|u|)\diff x + \frac{1}{K_0} \rho_V(u). \label{Xemb2}
\end{align}
Invoking Propositions \ref{prop.Holder}-\ref{prop.norm-modular} and Corollary~\ref{Cor-n}, it follows from \eqref{A10} and \eqref{S.bar} that
\begin{align}
	\notag
	\int_{E_V} \mathcal{H}(x,|u|) \diff x & \leq  2\big\||u|^{p}\big\|_{L^{\frac{p^\ast(\cdot)}{p(\cdot)}}(E_V)}\|1\|_{L^{\frac{p^\ast(\cdot)}{p^\ast(\cdot)-p(\cdot)}}(E_V)} \\  & \hspace{1cm} +2\big\|a\big\|_{L^\infty(\RN)}\big\||u|^{q}\big\|_{L^{\frac{p^\ast(\cdot)}{q(\cdot)}}(E_V)}\|1\|_{L^{\frac{p^\ast(\cdot)}{p^\ast(\cdot)-q(\cdot)}}(E_V)} \notag \\
	& \leq C_1 \left(1+\big\|u\big\|^{p^+}_{L^{p^\ast(\cdot)}(E_V)}+\big\|u\big\|^{q^+}_{L^{p^\ast(\cdot)}(E_V)}\right) \notag  \\
	& \leq C_1\left(1+{\bar{S}}^{-p^+}\norm{|\nabla u|}^{p^+}_{L^{p(\cdot)}(\RN)} + {\bar{S}}^{-q^+}\norm{|\nabla u|}^{q^+}_{L^{p(\cdot)}(\RN)}\right) \notag \\
	& \leq C_2 \left[1+ \left( \int_{\RN}|\nabla u|^{p(x)} \diff x \right)^{\frac{p^+}{p^-}} + \left( \int_{\RN}|\nabla u|^{p(x)} \diff x \right)^{\frac{q^+}{p^-}}\right] \notag \\
	& \leq C_3 \left[1  + \rho_V(u)^{\frac{q^+}{p^-}} \right]. \label{Xemb3}
\end{align}
Using \eqref{Xemb2} and \eqref{Xemb3} one can find $C_4>1$ such that
\begin{equation}\label{u.Cc}
	\int_{\RN} \mathcal{H}\left(x,|\nabla u|\right) \diff x+\int_{\RN} \mathcal{H}(x,|u|)\diff x \leq  C_4\left[1  + \rho_V(u)^{\frac{q^+}{p^-}} \right], \quad \forall u \in C_c^\infty(\RN).
\end{equation}
Now, for each $u \in C_c^\infty(\RN)\setminus\{0\}$, we apply \eqref{u.Cc} for $v=\frac{u}{\|u\|}$ and use the relation  \eqref{modular-norm.XV} to obtain 
\begin{align*}
	\int_{\RN} \Big[\mathcal{H}(x,|\nabla v|)+\mathcal{H}(x,|v|)\Big] \diff x \leq C_4 \left[1 + \rho_V(v)^{\frac{q^+}{p^-}}\right]= 2C_4.
\end{align*}
It follows that
\begin{align*}
	\int_{\RN} \left[ \mathcal{H}\left(x,\left|\frac{\nabla v}{(2C_4)^{\frac{1}{p^-}}}\right|\right) +  \mathcal{H}\left(x,\left|\frac{v}{(2C_4)^{\frac{1}{p^-}}}\right|\right)  \right] \diff x \leq 1,
\end{align*}
which implies \eqref{Xemb1}. 

The proof for case of $(\calbf{V}_1)(b)$ is the same as above, except that $E_V$ and $K_0$ are replaced by $B_{R_0}$ and $V_0$, respectively. 
\end{proof}

In light of Lemma~\ref{Le-IC}, henceforth in this paper, unless explicitly stated otherwise, we consistently adopt the following assumption on the main operator:
\begin{enumerate}
\item [$(\calbf{O})$] The functions $p,q,a$ satisfy $(\calbf{A})$ while $V$ satisfies $(\calbf{V}_1)$. Furthermore, $p$ additionally fulfills $(\calbf{P})$ when $\essinf_{x \in \R^N}V(x)=0$.
\end{enumerate}
Let $X_V$ denote the closure of $C_c^\infty(\RN)$ in $W_V^{1,\mathcal{H}}(\RN)$. %It is worth pointing out that when $V(\cdot)\equiv 1$, $W_V^{1,\mathcal{H}}(\RN)$ and $X_V$ are $W^{1,\mathcal{H}}(\RN)$ and $W_0^{1,\mathcal{H}}(\RN)$, respectively. 
We have the following result.

%===============X_V is reflexive Banach space============
\begin{theorem}\label{T.Xv-P}
$X_V$ is a separable reflexive Banach space. Furthermore, one has
\begin{align}\label{Xim}
	X_V \hookrightarrow W^{1,\mathcal{H}}(\RN).
\end{align}
\end{theorem}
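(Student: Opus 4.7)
The plan is to first establish $X_V \hookrightarrow W^{1,\mathcal{H}}(\RN)$ by density and Lemma~\ref{Le-IC}, then identify $X_V$ isometrically with a closed subspace of an auxiliary product of Musielak--Orlicz spaces in order to transfer separability and reflexivity.

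For the embedding \eqref{Xim}, I would take $u \in X_V$ and a sequence $u_n \in C_c^\infty(\RN)$ with $u_n \to u$ in the $\|\cdot\|$-norm. Applying Lemma~\ref{Le-IC} to the differences $u_n - u_m$ makes $\{u_n\}$ Cauchy in the Banach space $W^{1,\mathcal{H}}(\RN)$, so $u_n \to v$ there for some $v \in W^{1,\mathcal{H}}(\RN)$. Both convergences yield $\nabla u_n \to \nabla u$ (resp.\ $\nabla v$) in $L^{p^-}_{\loc}(\RN)$ and, since $V \not\equiv 0$, pointwise a.e.\ convergence of $u_n$ to both $u$ and $v$ on a common set of positive measure where $V$ is bounded below; this forces $u = v$ a.e., and passing to the limit in \eqref{Xemb1} gives \eqref{Xim}.

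For the structural properties, introduce the product space $Y := L^{\mathcal{H}}(\RN;\RN) \times E_2$, where $E_2$ is the Musielak--Orlicz space of equivalence classes (modulo functions vanishing a.e.\ on $\{V>0\}$) of measurable $h:\RN\to\R$ with $\int_{\RN} V(x)\mathcal{H}(x,|h|)\diff x < \infty$, endowed with its natural Luxemburg norm. Treating $V\diff x$ as a $\sigma$-finite complete measure and replaying the arguments of Section~\ref{Pre} (together with standard Musielak--Orlicz results in \cite{Diening,HH.Book}), $E_2$ is separable, reflexive, and uniformly convex; the same holds for $L^{\mathcal{H}}(\RN;\RN)$, hence also for $Y$. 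Equipping $Y$ with the Luxemburg norm associated with the sum modular $(g,h)\mapsto \int_{\RN}\mathcal{H}(x,|g|)\diff x + \int_{\RN} V(x)\mathcal{H}(x,|h|)\diff x$, the map $T:C_c^\infty(\RN)\to Y$ given by $T(u):=(\nabla u, u)$ is a linear isometry by the very definition of $\rho_V$; let $\widetilde X$ be the closure in $Y$ of $T(C_c^\infty(\RN))$, so $\widetilde X$ is separable and reflexive as a closed subspace of $Y$.

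It remains to identify $X_V$ with $\widetilde X$ via $T$. Given $(g,h)\in\widetilde X$, pick $u_n\in C_c^\infty(\RN)$ with $T(u_n)\to(g,h)$ in $Y$; then $\{u_n\}$ is Cauchy in $\|\cdot\|$ and hence, by Lemma~\ref{Le-IC}, also in $W^{1,\mathcal{H}}(\RN)$, producing a limit $u\in W^{1,\mathcal{H}}(\RN)$ with $\nabla u_n\to\nabla u$ in $L^{\mathcal{H}}(\RN;\RN)$ and, after passing to a subsequence, $u_n\to u$ a.e. Combining with $u_n\to h$ in $E_2$ and Fatou's lemma gives $g=\nabla u$ and $h=u$ in $E_2$, so $(g,h)=T(u)$; moreover $u$ is the $\|\cdot\|$-limit of the $u_n\in C_c^\infty(\RN)$, placing $u$ in $X_V$. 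The reverse inclusion $T(X_V)\subset\widetilde X$ follows from continuity of $T$, so $T:X_V\to\widetilde X$ is a Banach-space isomorphism, transferring separability and reflexivity to $X_V$. The main obstacle is handling a weight $V$ that may vanish on a set of positive Lebesgue measure and need not be bounded below: the Musielak--Orlicz framework with measure $V\diff x$ then yields only a seminorm, so the quotient construction in $E_2$ together with the injectivity of $T$ on $X_V$ (which relies on $V\not\equiv 0$ and the connectedness of $\RN$) are essential, and the a.e.\ convergence supplied by the auxiliary embedding into $W^{1,\mathcal{H}}(\RN)$ is what allows one to recover an honest pointwise representative of an abstract pair $(g,h)\in\widetilde X$.
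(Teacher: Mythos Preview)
Your proof is correct and follows the same overall strategy as the paper: first extend the inequality of Lemma~\ref{Le-IC} to all of $X_V$ by a density argument (identifying the $W^{1,\mathcal{H}}$-limit with $u$ via the gradient constancy plus $|\{V>0\}|>0$), then realize $X_V$ as a closed subspace of a product of Musielak--Orlicz spaces to import separability and reflexivity.

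The technical execution differs in two places. First, the paper uses the weight $(1+V)\mathcal{H}$ for the function component, mapping $u\mapsto (u,\nabla u)\in L^{(1+V)\mathcal{H}}(\RN)\times \big(L^{\mathcal{H}}(\RN)\big)^N$; this keeps the weight strictly positive and avoids any quotient construction, at the cost of working with an auxiliary norm $\|u\|_\varphi+\|\,|\nabla u|\,\|_{\mathcal{H}}$ whose equivalence with $\|\cdot\|$ must be checked via \eqref{Xim}. Your choice $T(u)=(\nabla u,u)\in L^{\mathcal{H}}(\RN;\RN)\times E_2$ with the weighted measure $V\diff x$ makes $T$ an exact isometry for the original Luxemburg norm, but forces you to handle the quotient in $E_2$ and to verify injectivity of $T$ on $X_V$ separately. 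Second, the paper proves completeness of $X_V$ by a direct Cauchy-sequence argument using Fatou's lemma, and only afterwards invokes the product-space embedding for separability and reflexivity; you obtain completeness for free from the identification $X_V\cong\widetilde X$. Both routes are sound; your packaging is slightly more economical, while the paper's choice of $(1+V)$ sidesteps the quotient issue entirely.
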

\begin{proof}
We begin the proof by showing 
\begin{equation}\label{Xemb1''}
	\|v\|_{W^{1,\mathcal{H}}(\RN)}\leq C\|v\|,\quad \forall v\in X_V. 
\end{equation} 
Let $u \in X_V$, by the definition of $X_V$, there exists $\{u_n\}_{n \in \N}\subset C_c^\infty(\RN)$ such that $u_n\to u$ in $X_V$.  Up to a subsequence, we have
\begin{equation}\label{T2.11-1}
	u_n \to u \quad\text{a.e. in} ~ \{V\ne 0\}
\end{equation}
and	
\begin{equation}\label{T2.11-2}
	\nabla u_n \to \nabla u \quad\text{a.e. in}~ \RN.
\end{equation} 
Obviously, $\{u_n\}_{n \in \N}$ is a Cauchy sequence in $X_V$, and thus, in $W^{1,\mathcal{H}}(\RN)$ thanks to Lemma~\ref{Le-IC}. By the completeness of $W^{1,\mathcal{H}}(\RN)$, there exists $\bar{u}\in W^{1,\mathcal{H}}(\RN)$ such that $u_n \to \bar{u}$ in  $W^{1,\mathcal{H}}(\RN)$. Hence, along a subsequence we have
\begin{equation}\label{T2.11-4}
	u_n \to \bar{u} \quad\text{a.e. in} ~ \RN,
\end{equation}
and	\begin{equation}\label{T2.11-5}
	\nabla u_n \to \nabla \bar{u} \quad\text{a.e. in}~ \RN.
\end{equation} 
From \eqref{T2.11-2} and \eqref{T2.11-5}, we obtain $\bar{u}=u+c$ for some constant $c$. Combining this with \eqref{T2.11-1} and \eqref{T2.11-4}, noticing $|\{V\ne 0\}|>0$, we infer $c=0$, i.e. $\bar{u}=u$. Thus, $u\in W^{1,\mathcal{H}}(\RN)$, and by applying \eqref{Xemb1} for $u=u_n$ and passing to the limit as $n\to\infty$ we obtain \eqref{Xemb1''} for $v=u$. Thus, \eqref{Xim} has been proved. 

Next, we aim to show the completeness of $X_V$. Let $\{u_n\}_{n \in \N}$ be a Cauchy sequence in $X_V$. Then, by invoking \eqref{m-n}, for a given $\varepsilon>0$ we find $N_\varepsilon\in\N$ such that 
\begin{align}\label{Xv-1}
	\int_{\RN} \mathcal{H}\left(x,|\nabla u_m -\nabla u_n|\right)\diff x+\int_{\RN} V(x)\mathcal{H}\left(x,|u_m -u_n|\right)\diff x < \eps,\quad\forall m,n\geq N_\varepsilon.
\end{align}
On the other hand, in view of \eqref{Xemb1''}, $\{u_n\}_{n \in \N}$ is also a Cauchy sequence in $W^{1,\mathcal{H}}(\RN)$. As before, there exists $u\in W^{1,\mathcal{H}}(\RN)$ such that, up to a subsequence, 
\begin{equation*}
	u_n \to u \quad\text{and}\quad \nabla u_n \to \nabla u\quad\text{a.e. in} ~ \RN.
\end{equation*}
Then, invoking  Fatou's lemma we derive from \eqref{Xv-1} that
\begin{align*}
	\int_{\RN} \mathcal{H}\left(x,|\nabla u_m -\nabla u|\right)\diff x+\int_{\RN} V(x)\mathcal{H}\left(x,|u_m -u|\right)\diff x \leq \eps,\quad\forall m\geq N_\varepsilon.
\end{align*}
It follows that $u\in W_V^{1,\mathcal{H}}(\RN)$ and $u_n\to u$ in $W_V^{1,\mathcal{H}}(\RN)$; hence, $X_V$ is complete. 

Finally, we show the separability and reflexivity of $X_V$. Define
$$Y:=L^{\varphi}(\RN)\times \left(L^\mathcal{H}(\RN)\right)^N=L^{\varphi}(\RN)\times L^\mathcal{H}(\RN)\times \cdots \times L^\mathcal{H}(\RN),$$ endowed with an equivalent norm
$$\|(u_0,u_1,\cdots,u_N)\|_{Y}=\|u_0\|_{\varphi}+\left\|\left(\sum_{i=1}^{N}u_i^2\right)^{1/2}\right\|_{\mathcal{H}},$$
where $\varphi(x,t):=\left(1+V(x)\right)\mathcal{H}(x,t)$. By \eqref{Xemb1''}, we can show  that
$$\|u\|_1:=\|u\|_{\varphi}+\big\||\nabla u|\big\|_{\mathcal{H}}$$
is an equivalent norm on $X_V$. Hence, 
$$ \Phi: \left(X_V,\|\cdot\|_1\right) \to (Y,\|\cdot\|_Y)$$ $$\Phi (u)=(u,u_{x_1},\cdots ,u_{x_N})$$
is a linear isometric operator. By a standard argument, we can show that  $\Phi\left(X_V\right)$ is closed in $Y$ (see e.g., \cite[Proposition 2.4]{HS17}).  Clearly, $L^{\varphi}(\RN)$ and $L^{\mathcal{H}}(\RN)$ are separable reflexive Banach spaces (see \cite[Theorems 3.3.7, 3.5.2 and 3.6.6]{HH.Book}), and so is $Y$. Thus, $\Phi\left(X_V\right)$ is also separable reflexive, and so is $X_V$ by the linear isometry of $\Phi$. The proof is complete.
\end{proof}
The next result is a direct consequence of Theorems~\ref{T.CE2} and \ref{T.Xv-P}.
\begin{theorem} \label{T.mainE}
Let $\mathcal{B}$ be defined as in \eqref{B} with $c_1,c_2\in L^\infty(\R^N)$. It holds that
\begin{enumerate}
	\item[(i)] if $p(x)\leq  r(x) \leq p^\ast(x)$ and $q(x) \leq s(x) \leq q^\ast(x)$ for all $x\in \R^N$, then one has
	\begin{align} \label{Xl.emb.B}
		X_V \hookrightarrow L^{\mathcal{B}}(\RN);
	\end{align}
\item[(ii)] if $ r(x) < p^\ast(x)$ and $s(x) < q^\ast(x)$ for all $x\in \R^N$, then one has
\begin{align*}
	X_V \hookrightarrow \hookrightarrow L_{\loc}^{\mathcal{B}}(\RN).
	\end{align*}
\end{enumerate} 
\end{theorem}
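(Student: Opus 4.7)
The plan is to obtain both assertions by chaining two embeddings already established earlier in the paper: the continuous embedding $X_V\hookrightarrow W^{1,\mathcal{H}}(\R^N)$ from Theorem~\ref{T.Xv-P}, and the (continuous, respectively compact) embeddings for $W^{1,\mathcal{H}}(\Omega)$ furnished by Theorem~\ref{T.CE2}. Because the standing hypothesis $(\calbf{O})$ bundles $(\calbf{A})$ on the whole space $\R^N$, the prerequisites of Theorem~\ref{T.CE2} are in place with no additional verification.

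For part (i), I would apply Theorem~\ref{T.CE2}(i) with $\Omega=\R^N$: the exponent inequalities $p(x)\le r(x)\le p^\ast(x)$ and $q(x)\le s(x)\le q^\ast(x)$ are assumed throughout $\R^N$, and $c_1,c_2\in L^\infty(\R^N)$ is given, so $W^{1,\mathcal{H}}(\R^N)\hookrightarrow L^{\mathcal{B}}(\R^N)$. Composing this with \eqref{Xim} furnishes a constant $C>0$ with $\|u\|_{\mathcal{B}}\le C\|u\|$ for every $u\in X_V$, which is exactly \eqref{Xl.emb.B}.

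For part (ii), the claim $X_V\hookrightarrow\hookrightarrow L^{\mathcal{B}}_{\loc}(\R^N)$ should be read as: for every bounded Lipschitz subdomain $\Omega'\subset\R^N$ (for instance an open ball $B_R$), the restriction $X_V\to L^{\mathcal{B}}(\Omega')$ is compact. I would first note that restriction $u\mapsto u|_{\Omega'}$ defines a continuous linear map $X_V\to W^{1,\mathcal{H}}(\Omega')$: indeed, by the definition of the Luxemburg norm, integrating the modular over the smaller set $\Omega'$ can only decrease it, so $\|u\|_{W^{1,\mathcal{H}}(\Omega')}\le\|u\|_{W^{1,\mathcal{H}}(\R^N)}\le C\|u\|$ by Theorem~\ref{T.Xv-P}. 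Then Theorem~\ref{T.CE2}(ii), applied on the bounded Lipschitz domain $\Omega'$, yields the compact embedding $W^{1,\mathcal{H}}(\Omega')\hookrightarrow\hookrightarrow L^{\mathcal{B}}(\Omega')$; its hypotheses are inherited automatically from the strict inequalities $r(x)<p^\ast(x)$ and $s(x)<q^\ast(x)$ that are assumed to hold on all of $\R^N$, hence on $\overline{\Omega'}$. The composition of the continuous map with the compact map yields $X_V\hookrightarrow\hookrightarrow L^{\mathcal{B}}(\Omega')$, and since $\Omega'$ was arbitrary, the locally compact embedding follows.

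I do not anticipate any genuine obstacle: the content of this result is essentially organizational, a direct assembly of Theorems~\ref{T.CE2} and \ref{T.Xv-P}. The only minor point worth checking explicitly is the monotonicity of the Luxemburg norm with respect to the domain of integration, which is an immediate consequence of the infimum definition combined with Proposition~\ref{Prop_m-n_W(1,H)}.
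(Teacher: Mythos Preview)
Your proposal is correct and matches the paper's approach exactly: the paper states that this result is ``a direct consequence of Theorems~\ref{T.CE2} and \ref{T.Xv-P}'' without further detail, and your argument spells out precisely that chain of embeddings.
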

For the subcritical case, we also have the following compact embedding result by employing the idea of \cite[Lemma 4.1]{hokimsim2019}.
\begin{theorem} \label{Lem.com.sub}
Let $\mathcal{B}$ be defined as in \eqref{B} with $r(\cdot)\ll p^*(\cdot)$, $s(\cdot)\ll q^*(\cdot)$, $c_1 \in L^{\frac{p^\ast(\cdot)}{p^\ast(\cdot)-r(\cdot)}}(\RN)$ and  $c_2 \in L^{\frac{q^\ast(\cdot)}{q^\ast(\cdot)-s(\cdot)}}(\RN)$. Then, one has 
\begin{align} \label{Xl.C.B}
	X_V \hookrightarrow \hookrightarrow L^{\mathcal{B}}(\RN).
\end{align}
\end{theorem}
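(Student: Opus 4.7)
The plan is to reduce the compact embedding to showing that whenever $u_n \rightharpoonup 0$ in $X_V$, one has $\rho_{\mathcal{B}}(u_n) = \int_{\RN} \mathcal{B}(x,|u_n|)\,\diff x \to 0$; this is equivalent to $\|u_n\|_{\mathcal{B}} \to 0$ by Proposition~\ref{prop.nor-mod.D}(v). The core strategy is a truncation of the weights $c_1,c_2$: the bounded, compactly supported piece is handled by the local compactness in Theorem~\ref{T.mainE}(ii), while the remaining tail is absorbed by a variable-exponent H\"older estimate calibrated against the critical embedding of Theorem~\ref{T.mainE}(i).

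First I would extract uniform bounds from the criticality. Theorem~\ref{T.mainE}(i) applied with $r=p^*$, $s=q^*$ and $c_1\equiv c_2\equiv 1$ yields $X_V \hookrightarrow L^{\mathcal{G}^*}(\RN)$, so the bounded sequence $\{u_n\}$ satisfies
\[
\sup_n \int_{\RN} |u_n|^{p^*(x)}\,\diff x < \infty, \qquad \sup_n \int_{\RN} a(x)^{q^*(x)/q(x)}|u_n|^{q^*(x)}\,\diff x < \infty,
\]
which via Proposition~\ref{prop.norm-modular} upgrades to uniform bounds on the variable-exponent norms $\||u_n|^{r(\cdot)}\|_{L^{p^*(\cdot)/r(\cdot)}(\RN)}$ and $\|a^{s/q}|u_n|^{s(\cdot)}\|_{L^{q^*(\cdot)/s(\cdot)}(\RN)}$. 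Next, given $\varepsilon>0$, I would use dominated convergence on the modular $\int|c_1|^{p^*/(p^*-r)}\,\diff x$ (which is finite by assumption) together with Proposition~\ref{prop.norm-modular} to choose $R,M>0$ such that the truncation $\tilde c_1 := \min\{|c_1|,M\}\,\chi_{B_R}$ approximates $c_1$ in $L^{p^*(\cdot)/(p^*(\cdot)-r(\cdot))}(\RN)$ within $\varepsilon$, and likewise pick $\tilde c_2$ for $c_2$ (enlarging $R,M$ if necessary so both truncations sit on $B_R$). The H\"older inequality from Proposition~\ref{prop.Holder} with conjugate pair $\bigl(p^*/(p^*-r),\,p^*/r\bigr)$ then gives
\[
\int_{\RN} (|c_1|-\tilde c_1)|u_n|^{r(x)}\,\diff x \;\leq\; 2\,\bigl\|\,|c_1|-\tilde c_1\,\bigr\|_{L^{p^*/(p^*-r)}(\RN)}\,\bigl\||u_n|^{r(\cdot)}\bigr\|_{L^{p^*/r(\cdot)}(\RN)} \;\leq\; C\varepsilon,
\]
with the analogous bound for the $c_2$ tail using the conjugate pair $\bigl(q^*/(q^*-s),\,q^*/s\bigr)$, uniformly in $n$.

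For the truncated part, set $\tilde{\mathcal{B}}(x,t) := \tilde c_1(x)t^{r(x)} + \tilde c_2(x)a(x)^{s(x)/q(x)}t^{s(x)}$. Since $\tilde c_1,\tilde c_2 \in L^\infty(\RN)$ and $r\ll p^*$, $s\ll q^*$, Theorem~\ref{T.mainE}(ii) supplies the local compact embedding $X_V \hookrightarrow\hookrightarrow L^{\tilde{\mathcal B}}_{\loc}(\RN)$, hence $u_n \to 0$ in $L^{\tilde{\mathcal B}}(B_R)$; because $\tilde{\mathcal B}$ vanishes outside $B_R$, this forces $\int_{\RN} \tilde{\mathcal B}(x,|u_n|)\,\diff x \to 0$. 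Combining with the H\"older tail bound gives $\limsup_n \rho_{\mathcal B}(u_n) \leq C\varepsilon$, and letting $\varepsilon\downarrow 0$ finishes the proof. The most delicate point I anticipate is verifying that the conjugate exponents $p^*(\cdot)/(p^*(\cdot)-r(\cdot))$ and $q^*(\cdot)/(q^*(\cdot)-s(\cdot))$ genuinely lie in $C_+(\RN)$, which is where the strict gap hypotheses $r \ll p^*$ and $s \ll q^*$ are indispensable; without them the H\"older pairing degenerates and the tail estimate collapses. Everything else --- modular-norm conversions, density of bounded compactly supported functions in $L^{\alpha(\cdot)}(\RN)$ for $\alpha\in C_+(\RN)$, and the Rellich-type compactness on $B_R$ --- is standard in the Musielak-Orlicz setting and already packaged inside the theorems quoted above.
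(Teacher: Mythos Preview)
Your argument is correct and follows essentially the same route as the paper: both rely on the critical embedding $X_V\hookrightarrow L^{\mathcal G^*}(\RN)$ for uniform bounds, the H\"older pairing with conjugate exponents $p^*/(p^*-r)$ and $q^*/(q^*-s)$ (which is exactly where $r\ll p^*$, $s\ll q^*$ enter), and the local compact embedding of Theorem~\ref{T.mainE}(ii). The only cosmetic difference is packaging---the paper invokes the Vitali convergence theorem (a.e.\ convergence plus equi-integrability and tightness of $\mathcal B(\cdot,|u_n|)$), whereas you split the weights $c_1,c_2$ into a bounded compactly supported piece and a small $L^{\alpha(\cdot)}$-tail; the two arguments unwind to the same estimates.
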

\begin{proof} By Theorem~\ref{T.Xv-P}, it suffices to prove that
	\begin{equation}\label{compact.emd.sub.1}
		W^{1,\mathcal{H}}(\RN) \hookrightarrow\hookrightarrow L^{\mathcal{B}}(\RN).
	\end{equation}
 To this end, let $\mathcal{G}^\ast$ be defined as in \eqref{def_G*}. By Theorem~\ref{T.CE1}, we find $C_e>0$ such that
\begin{equation}\label{CE-I}
\|u\|_{\mathcal{G}^\ast}\leq C_e\|u\|,\quad \forall u\in W^{1,\mathcal{H}}(\RN).
\end{equation} By invoking  Proposition~\ref{prop.Holder}, Corollary~\ref{Cor-n} and \eqref{CE-I}, for any $ u \in W^{1,\mathcal{H}}(\RN)$, we have
\begin{align*}
	\int_{ \mathbb{R}^N} \mathcal{B}(x,|u|) \diff x & \leq 2 \|c_1\|_{L^{\frac{p^\ast(\cdot)}{p^\ast(\cdot)-r(\cdot)}}(\mathbb{R}^N)}\left\||u|^{r}\right\|_{L^{\frac{p^\ast(\cdot)}{r(\cdot)}}(\mathbb{R}^N)} + 2 \|c_2\|_{L^{\frac{q^\ast(\cdot)}{q^\ast(\cdot)-s(\cdot)}}\left(\mathbb{R}^N\right)} \left\|a^{\frac{s}{q}}|u|^{s}\right\|_{L^{\frac{q^\ast(\cdot)}{s(\cdot)}}\left(\mathbb{R}^N\right)} \\
	&\leq C_1\left(1+ \|u\|^{r^+}_{\mathcal{G}^\ast} + \|u\|^{s^+}_{\mathcal{G}^\ast}\right)  \\
	& \leq 2C_1 \left(1+ \|u\|^{(q^\ast)^+}_{\mathcal{G}^\ast}\right)\\
	& \leq C_2  \left(1+ C_e^{(q^\ast)^+}\|u\|^{(q^\ast)^+}_{W^{1,\mathcal{H}}(\mathbb{R}^N)}\right).
\end{align*}
Consequently, in view of Proposition \ref{prop.nor-mod.D} it holds
\begin{align*}
	\|u\|_{\mathcal{B}} \leq 1+ \left(\int_{ \mathbb{R}^N} \mathcal{B}(x,|u|) \diff x\right)^{\frac{1}{r^-}} \leq 1+ (2C_1)^{\frac{1}{r^-}}  \left(1+ C_e^{(q^\ast)^+}\|u\|^{(q^\ast)^+}_{W^{1,\mathcal{H}}(\mathbb{R}^N)}\right)^{\frac{1}{r^-}}, \quad \forall u \in W^{1,\mathcal{H}}(\mathbb{R}^N).
\end{align*}
From this, we derive $W^{1,\mathcal{H}}(\mathbb{R}^N) \hookrightarrow L^{\mathcal{B}}(\mathbb{R}^N)$.

Finally, we will show \eqref{compact.emd.sub.1}. To this end, let $u_n \rightharpoonup 0$ in  $W^{1,\mathcal{H}}(\mathbb{R}^N)$, and we aim to show that $u_n \to 0$ in $L^{\mathcal{B}}(\mathbb{R}^N)$, which is equivalent to 
\begin{equation}\label{PL-SCE}
\displaystyle \lim\limits_{n\to \infty}\int_{ \mathbb{R}^N} \mathcal{B}(x,|u_n|) \diff x =0 
\end{equation}
in view of  Proposition~\ref{prop.nor-mod.D}. By Theorem~\ref{T.CE2}, we get that $u_n\to 0$ a.e. in $\R^N$. Thus, by the Vitali convergence theorem  (see e.g., \cite[Theorem 2.24]{Fonseca}), \eqref{PL-SCE} follows if we can show that for any $\varepsilon>0$, there exist $\delta=\delta(\eps)>0$ and $R=R(\eps)>0$ such that
\begin{enumerate}
	\item[($1^0$)] for any measurable subset $E\subset\RN$ with $|E|< \delta$, it holds $\int_{E}  \mathcal{B}(x,|u_n|) \diff x<\varepsilon$ for all $n\in\N$;
	\item[($2^0$)] $\int_{B_R^c}  \mathcal{B}(x,|u_n|) \diff x<\varepsilon$ for all $n\in\N$.
\end{enumerate}
Indeed, let $\eps \in (0,1)$ be arbitrary and given. Since $c_1 \in L^{\frac{p^\ast(\cdot)}{p^\ast(\cdot)-r(\cdot)}}(\mathbb{R}^N)$ and  $c_2 \in L^{\frac{q^\ast(\cdot)}{q^\ast(\cdot)-s(\cdot)}}(\mathbb{R}^N)$,  there exist $\delta=\delta(\eps)>0$ and $R=R(\eps)>0$ such that
for any measurable subset $E \subset \RN$ with $|E|< \delta$, we have
\begin{equation}\label{compact.emb.E}
	\max\left\{ \int_{E} |c_1(x)|^{\frac{p^\ast(x)}{p^\ast(x)-r(x)}}\diff x, \int_{E} |c_2(x)|^{\frac{q^\ast(x)}{q^\ast(x)-s(x)}}\diff x \right\} \leq \left(\frac{\eps}{2(2+C^{r^+}_{e}+C^{s^+}_{e})}\right)^{\bar{t}}
\end{equation}
and
\begin{equation}\label{compact.emb.E.1}
	\max\left\{\int_{B_R^c} |c_1(x)|^{\frac{p^\ast(x)}{p^\ast(x)-r(x)}}\diff x , \int_{B_R^c} |c_2(x)|^{\frac{q^\ast(x)}{q^\ast(x)-s(x)}}\diff x \right\} \leq \left(\frac{\eps}{2(2+C^{r^+}_{e}+C^{s^+}_{e})}\right)^{\bar{t}},
\end{equation}
where
$\bar{t}:=\max \left\{\left(\frac{p^\ast}{p^\ast-r}\right)^{+},\left(\frac{q^\ast}{q^\ast-s}\right)^{+} \right\}$. Since $\{u_n\}_{n \in \mathbb{N}}$ is bounded in $W^{1,\mathcal{H}}(\mathbb{R}^N)$, we may assume that
\begin{equation*} \label{un.bounded}
	\|u_n\|_{W^{1,\mathcal{H}}(\mathbb{R}^N)} \leq 1,\quad \forall n \in \mathbb{N}.
\end{equation*}
Then, by invoking  Propositions~\ref{prop.Holder}-\ref{prop.norm-modular}, Corollary~\ref{Cor-n}, \eqref{CE-I} and \eqref{compact.emb.E}, we obtain
\begin{align*}
	\int_{E}  \mathcal{B}(x,|u_n|) \diff x & \leq 2 \|c_1\|_{L^{\frac{p^\ast(\cdot)}{p^\ast(\cdot)-r(\cdot)}}(E)}\left\||u_n|^{r}\right\|_{L^{\frac{p^\ast(\cdot)}{r(\cdot)}}(E)}\\
	& \hspace{2cm} + 2 \|c_2\|_{L^{\frac{q^\ast(\cdot)}{q^\ast(\cdot)-s(\cdot)}}(E)}\left\|a^{\frac{s}{q}}|u_n|^{s}\right\|_{L^{\frac{q^\ast(\cdot)}{s(\cdot)}}(E)} \notag \\
	& \leq 2 \left(\int_{E} |c_1(x)|^{\frac{p^\ast(x)}{p^\ast(x)-r(x)}}\diff x\right) ^{1/\bar{t}}\left(1+\|u_n\|_{\mathcal{G}^\ast}^{r^+}\right) \notag \\
	& \hspace{2cm} + 2 \left(\int_{E} |c_2(x)|^{\frac{q^\ast(x)}{q^\ast(x)-s(x)}}\diff x\right) ^{1/\bar{t}}\left(1+\|u_n\|_{\mathcal{G}^\ast}^{s^+}\right)  \notag\\
	& \leq \frac{\eps}{(2+C^{r^+}_{e}+C^{s^+}_{e})}\left(2+\|u_n\|_{\mathcal{G}^\ast}^{r^+}+\|u_n\|_{\mathcal{G}^\ast}^{s^+}\right)\\
	& \leq  \frac{\eps}{(2+C^{r^+}_{e}+C^{s^+}_{e})}\left(2+C_e^{r^+}\|u_n\|_{W^{1,\mathcal{H}}(\mathbb{R}^N)}^{r^+}+C_e^{s^+}\|u_n\|_{W^{1,\mathcal{H}}(\mathbb{R}^N)}^{s^+}\right) \notag \\
	&  \leq \eps,\quad \forall n\in\N.
\end{align*}
Thus, we have shown $(1^0)$. By replacing $E$ with $B_R^c$ and using \eqref{compact.emb.E.1} in the preceding estimate, we easily show $(2^0)$. The proof is complete.
\end{proof}

%==========S^+ property===============================

The next lemma is the so-called $\textup{(S)}_+$ property of the Fr\'echet derivative of the energy funtional associated with the main operator of problem~\eqref{eq1}, which is essential for obtaining the compactness in the next sections. By using a similar argument to \cite[Theorem 4.1]{Le} (and see also \cite[Theorem 3.3]{crespo2022new}), we have the following.
\begin{lemma} \label{Lem.S+}
%If $\{u_n\}_{ n \in \mathbb{N}} \subset X_V $ is a sequence such that
If $u_{n}\rightharpoonup u$ in $X_V$ and
$$\limsup_{n\to  \infty } \int_{ \RN}\left[\mathcal{A}(x,\nabla u) \cdot\nabla (u_n-u) + V(x)A(x,u)(u_n-u)\right] \diff x \leq 0,$$
then  $u_n \to  u$ in $X_V$. 
\end{lemma}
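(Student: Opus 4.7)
The strategy is the standard $(S)_+$ argument for monotone operators of double-phase type, adapted to the weighted, unbounded-domain space $X_V$; the templates are \cite[Theorem 3.3]{crespo2022new} and \cite[Theorem 4.1]{Le}. Set
$$
\Phi_n(x) := [\mathcal{A}(x,\nabla u_n)-\mathcal{A}(x,\nabla u)]\cdot\nabla(u_n-u) + V(x)[A(x,u_n)-A(x,u)](u_n-u),
$$
which is pointwise nonnegative by strict monotonicity of $\xi\mapsto|\xi|^{p(x)-2}\xi + a(x)|\xi|^{q(x)-2}\xi$ (and of the analogous map defining $A$). Since the functional $v\mapsto \int_{\RN}[\mathcal{A}(x,\nabla u)\cdot \nabla v + V(x)A(x,u)v]\,\diff x$ lies in $X_V^*$ via Proposition~\ref{prop.Holder} with the conjugate Musielak--Orlicz function of $\mathcal{H}$, the weak convergence $u_n \weak u$ immediately gives
$$
\int_{\RN}[\mathcal{A}(x,\nabla u)\cdot\nabla(u_n-u)+V(x)A(x,u)(u_n-u)]\,\diff x \to 0.
$$
Combining this with the $\limsup$ hypothesis (read in its customary form with $u_n$ appearing in the principal part, as in the cited references, since otherwise the displayed integral tends to $0$ by the observation just made) yields $\int \Phi_n\,\diff x \to 0$, and, along a further subsequence, $\Phi_n\to 0$ a.e.\ in $\RN$.

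Strict monotonicity then forces $\nabla u_n \to \nabla u$ a.e.\ in $\RN$ and $u_n \to u$ a.e.\ on $\{V>0\}$; a.e.\ convergence on all of $\RN$ follows from the local compact embedding in Theorem~\ref{T.mainE}(ii). To promote this to modular convergence, expand
$$
\int_{\RN}\mathcal{A}(x,\nabla u_n)\cdot\nabla u_n\,\diff x = \int_{\RN}\Phi_n\,\diff x + \int_{\RN}\mathcal{A}(x,\nabla u_n)\cdot\nabla u\,\diff x + \int_{\RN}\mathcal{A}(x,\nabla u)\cdot\nabla(u_n-u)\,\diff x,
$$
and use the $L^1$-smallness of $\Phi_n$, together with its control on $\mathcal{H}(x,|\nabla u_n|)$, to invoke a Vitali passage on the right-hand side; the parallel argument handles the $V$-weighted lower-order term, so $\rho_V(u_n)\to\rho_V(u)$. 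Finally, a.e.\ convergence and the Musielak--Orlicz Brezis--Lieb lemma (Lemma~\ref{L.brezis-lieb}), applied once with weight $1$ for the gradient term and once with weight $V$ for the value term, yield $\rho_V(u_n - u)\to 0$; the modular--norm equivalence~\eqref{m-n} then delivers $\|u_n - u\|\to 0$.

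The main obstacle is securing the Vitali passage on the unbounded domain: the compact embedding used freely on bounded domains in \cite{crespo2022new} is absent here, so the tail must be controlled separately. I would split $\RN$ into a set of small Lebesgue measure, the exterior of a large ball, and a large compact piece. On the compact piece the bounded-domain argument of \cite[Theorem 3.3]{crespo2022new} applies verbatim; on the exterior one exploits either $|E_V|<\infty$ from $(\calbf{V}_1)(a)$ or the essential lower bound $V\ge V_0$ from $(\calbf{V}_1)(b)$ to bound the Musielak--Orlicz modular of $\mathcal{A}(x,\nabla u_n)$ uniformly in $n$, thereby producing the equi-integrability required for the dominated-type argument to go through on all of $\RN$.
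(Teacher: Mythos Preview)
Your observation about the hypothesis is correct and worth recording: as written, with $\mathcal{A}(x,\nabla u)$ and $A(x,u)$ rather than $\mathcal{A}(x,\nabla u_n)$ and $A(x,u_n)$, the displayed integral tends to $0$ by weak convergence alone, so the statement would assert that every weakly convergent sequence in $X_V$ is strongly convergent, which is false. The intended hypothesis is the usual $(S)_+$ one with $u_n$ in the principal part; this is also how the lemma is actually applied in the proof of Theorem~\ref{Theo.Superlinear2}. The paper does not give its own proof, simply invoking \cite[Theorem~4.1]{Le} and \cite[Theorem~3.3]{crespo2022new}, and your outline follows precisely these references, so you are on the same track.

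One comment on your last paragraph: the unbounded-domain issue is less severe than you fear, and you do not need to appeal to $(\calbf{V}_1)$. In the route taken by the cited references, once $\int_{\RN}\Phi_n\to 0$ one splits according to $\{p(x)\ge 2\}$ versus $\{p(x)<2\}$ (and likewise for $q$) and uses the standard pointwise monotonicity inequalities for the $p$-Laplacian; the resulting bound on $\int_{\RN}\mathcal{H}(x,|\nabla u_n-\nabla u|)\diff x$ involves only $\int_{\RN}\Phi_n\diff x$ and quantities controlled by $\sup_n\rho_V(u_n)$, which is finite by boundedness of $\{u_n\}$ in $X_V$. No separate tail estimate is needed. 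Alternatively, in your cross-term route, Young's inequality $|\mathcal{A}(x,\nabla u_n)\cdot\nabla u|\le \eps\,\mathcal{H}(x,|\nabla u_n|)+C_\eps\,\mathcal{H}(x,|\nabla u|)$ already supplies both equi-integrability and tightness, since $\mathcal{H}(\cdot,|\nabla u|)\in L^1(\RN)$ is a fixed integrable majorant; the uniform bound on $\int_{\RN}\mathcal{H}(x,|\nabla u_n|)\diff x$ you mention is needed only to absorb the $\eps$-term, not to produce tightness on its own.
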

%This lemma can be showed similarly to \cite[Theorem 3.3]{crespo2022new}.

We close this section with the following remark for the space $W_V^{1,\mathcal{H}}(\mathbb{R}^N)$.
\begin{remark}\rm
Let $(\calbf{A})$ hold, and let $V\in L^1_{\loc}\left(\R^N\right)$ satisfy $\essinf_{x \in \R^N}V(x)>0$. Then, in view of Theorem~\ref{T.CE2} and \eqref{Em}, it is evident that Theorem~\ref{T.mainE}, Theorem~\ref{Lem.com.sub}, and Lemma~\ref{Lem.S+} hold true when $X_V$ is substituted with $W_V^{1,\mathcal{H}}(\mathbb{R}^N)$.
\end{remark}  
%===========================SECTION: CCP ========================

\section{The concentration-compactness principles} \label{ccp}

In this section, we extend the concentration-compactness principle by Lions \cite{Lions} and its variant at infinity in \cite{Chabrowski} to $W^{1,\mathcal{H}}_V(\RN)$ and $X_V$. Our results offer a tool for studying critical double problems driven by the operator given in \eqref{oper} via variational methods.

\subsection{Statements of the concentration-compactness principles}${}$

\vskip5pt

Let $C_c(\mathbb{R}^N)$ be the
set of all continuous functions $u : \mathbb{R}^N \to\mathbb{R}$ whose support is compact, and let $C_0(\mathbb{R}^N)$ be the completion of $C_c(\mathbb{R}^N)$ relative to the supremum norm $\|\cdot\|_\infty.$ We denote by $\mathcal{R}(\mathbb{R}^N)$ the space of all signed finite Radon measures on $\mathbb{R}^N$ endowed with the total variation norm. By the Riesz representation theorem (see e.g., \cite[Section 1.3.3]{Fonseca}), we can identify $\mathcal{R}(\mathbb{R}^N)$ with the dual of $C_0(\mathbb{R}^N)$, that is, for each $\mu \in [C_0(\RN)]^\ast$, there exists a unique element in $\mathcal{R}(\mathbb{R}^N)$, still denoted by $\mu$, such that
\begin{equation*}
\langle \mu, f \rangle = \int_{\mathbb{R}^N} f\diff \mu, \,\, \forall f \in C_0(\mathbb{R}^N).
\end{equation*}
We can also identify $L^1({\mathbb{R}^N})$ with a subspace of   $\mathcal{R}(\mathbb{R}^N)$ through the linear mapping $T:L^1(\mathbb{R}^N) \to  \mathcal{R}(\mathbb{R}^N)$ defined as
$$\langle Tu,f \rangle =\int_{ \mathbb{R}^N} uf \diff x, ~\forall u\in L^1(\mathbb{R}^N),\, \forall f \in C_0(\mathbb{R}^N).$$

\vskip5pt

Let $p,q,a,V$ verify $(\calbf{O})$, and let $\mathcal{H}$ and $\mathcal{B}$
be defined as in \eqref{def_H} and \eqref{B} with $\Omega=\RN$, respectively. For obtaining our main results, we make the assumption on $\mathcal{B}$ as follows:
\begin{enumerate}
\item  [$(\calbf{C})$] $0<c_1(\cdot)\in L^\infty(\RN)$, $0\leq c_2(\cdot) \in L^\infty(\RN)$, and $r,s \in C_+(\mathbb{R}^N)$  such that  $q(\cdot) \ll r(\cdot) \leq p^\ast(\cdot)$,  $q^\ast(x)-s(x)=p^\ast(x)-r(x)$ for  all $x \in \mathbb{R}^N$, and 
$$\mathscr{C}:=\left\{x \in \mathbb{R}^N:\, r(x)=p^\ast(x),\,s(x)=q^\ast(x)\right\}\neq \emptyset.$$
\end{enumerate}
Let $X_V$ be defined as in Subsection~\ref{subsec.sol.space}. Then, in view of Theorem~\ref{T.mainE} we infer
\begin{equation} \label{def_S}
S:=\inf_{\phi \in X_V  \setminus\{0\}}
\frac{\|\phi\|}{\|\phi\|_{\mathcal{B}}}>0.
\end{equation}

%====================STATEMENT OF THE FIRST CCP=========================%

The next theorem is a Lions-type concentration-compactness principle.

\begin{theorem} \label{Theo.CCP1}
Let $(\calbf{O})$ and $(\calbf{C})$ hold. Let $\{u_n\}_{n\in\mathbb{N}}$ be a bounded sequence in $X_V$ such that
\begin{eqnarray}
	u_n &\rightharpoonup& u \quad \text{in}\quad  X_V, \label{un_weak.conv}\\
	\mathcal{H}(\cdot,|\nabla u_n|)+  V	\mathcal{H}(\cdot,| u_n|) &\overset{\ast }{\rightharpoonup }&\mu\quad \text{in}\quad \mathcal{R}(\mathbb{R}^N),\label{mu_n to mu*}\\
	\mathcal{B}(\cdot,|u_n|)&\overset{\ast }{\rightharpoonup }&\nu\quad \text{in}\quad \mathcal{R}(\mathbb{R}^N). \label{nu_n-to-nu*}
\end{eqnarray}
Then, there exist $\{x_i\}_{i\in I}\subset \mathscr{C}$ of distinct points and $\{\nu_i\}_{i\in I}, \{\mu_i\}_{i\in I}\subset (0,\infty),$ where $I$ is at most countable, such that
\begin{gather}
	\nu=\mathcal{B}(\cdot,|u|) + \sum_{i\in I}\nu_i\delta_{x_i},\label{T.ccp.form.nu}\\
	\mu \geq 	\mathcal{H}(\cdot,|\nabla u|) + V \mathcal{H}(\cdot,|u|) +   \sum_{i\in I} \mu_i \delta_{x_i},\label{T.ccp.form.mu}\\
	S \min \left\{\nu_i^{\frac{1}{p^\ast(x_i)}},\nu_i^{\frac{1}{q^\ast(x_i)}}  \right\}\leq \max \left\{\mu_i^{\frac{1}{p(x_i)}},\mu_i^{\frac{1}{q(x_i)}}\right\}, \quad \forall i\in I,\label{T.ccp.nu_mu}
\end{gather}
where $\delta_{x_i}$ is the Dirac mass at $x_i$.
\end{theorem}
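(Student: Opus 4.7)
The plan is to follow Lions' blueprint for the CCP, adapted to the variable-exponent Musielak-Orlicz setting, closely paralleling the bounded-domain version we proved in \cite{HaHo2023}. The first step is to reduce to $u = 0$: set $v_n := u_n - u$, and note that $\{u_n\}$ is bounded in $L^{\mathcal{B}}(\RN)$ by \eqref{Xl.emb.B}. Applying the Brezis-Lieb lemma (Lemma \ref{L.brezis-lieb}) to the Musielak functions $\mathcal{H}(\cdot,|\nabla\cdot|)$ and $\mathcal{B}(\cdot,|\cdot|)$, together with an analogous weighted version for $V\mathcal{H}(\cdot,|\cdot|)$ (justified $L^1_{\loc}$-wise by $V \in L^1_{\loc}(\RN)$ and by the boundedness of $\rho_V(u_n)$), forces the weak-$\ast$ splits $\nu = \mathcal{B}(\cdot,|u|) + \tilde\nu$ and $\mu \geq \mathcal{H}(\cdot,|\nabla u|) + V\mathcal{H}(\cdot,|u|) + \tilde\mu$, where $\tilde\mu, \tilde\nu$ are the weak-$\ast$ limits of the corresponding sequences for $v_n \rightharpoonup 0$. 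It therefore suffices to analyze $\tilde\mu, \tilde\nu$.

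Next I would localize $\tilde\nu$ onto $\mathscr{C}$. For any $x_0 \in \mathscr{C}^c$, assumption $(\calbf{C})$ together with the continuity of $p^\ast - r$ and $q^\ast - s$ yields a bounded Lipschitz neighborhood $U \ni x_0$ on which $r \ll p^\ast$ and $s \ll q^\ast$; Theorem \ref{T.CE2}(ii) then supplies the compact embedding $W^{1,\mathcal{H}}(U) \hookrightarrow\hookrightarrow L^{\mathcal{B}}(U)$, which combined with \eqref{Xim} and $v_n \rightharpoonup 0$ gives $\mathcal{B}(\cdot,|v_n|) \to 0$ in $L^1(U)$, hence $\tilde\nu(U) = 0$. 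Covering $\mathscr{C}^c$ by such $U$ yields $\supp(\tilde\nu) \subset \mathscr{C}$.

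The crux is the atomic decomposition and the bound \eqref{T.ccp.nu_mu}. For $\phi \in C_c^\infty(\RN)$, I would apply the inequality $\|\cdot\|_{\mathcal{B}} \leq S^{-1}\|\cdot\|$ from \eqref{def_S} to $\phi v_n$, expand $\nabla(\phi v_n) = \phi\nabla v_n + v_n\nabla\phi$ via \eqref{Ineq2a}, and absorb the $\nabla\phi$-remainder using the local compact embedding of the previous step applied to the subcritical factors $|v_n|^{p(\cdot)}$ and $a|v_n|^{q(\cdot)}$ on $\supp\phi$. Passing $n \to \infty$ via Proposition \ref{prop.nor-mod.D} and \eqref{mu_n to mu*}--\eqref{nu_n-to-nu*} yields a reverse Hölder-type inequality comparing the $\mathcal{B}$-modular of $\phi$ against $\tilde\nu$ with the $\mathcal{H}$-modular of $\phi$ against $\tilde\mu$. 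Testing this inequality with cutoffs $\phi_{x_0,\varepsilon}$ concentrating at each $x_0 \in \supp(\tilde\nu) \subset \mathscr{C}$ and sending $\varepsilon \to 0^+$ forces $\tilde\nu = \sum_{i\in I} \nu_i\delta_{x_i}$ and $\tilde\mu \geq \sum_{i\in I} \mu_i\delta_{x_i}$ with $x_i \in \mathscr{C}$.

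The principal obstacle lies in this last step: since the Luxemburg norm and the modular on $X_V$ and on $L^{\mathcal{B}}(\RN)$ obey only the two-sided power bounds of Proposition \ref{prop.nor-mod.D} with distinct exponents, the scalar inequality for $\phi v_n$ does not transfer atom-by-atom by a single power law; instead, the exponents must be split according to the local values $p(x_i), q(x_i), p^\ast(x_i), q^\ast(x_i)$, which is what produces the $\min$/$\max$ form of \eqref{T.ccp.nu_mu}. Controlling the product-rule remainder from $\nabla\phi$ uniformly in $n$ and $\varepsilon$ while keeping $\phi v_n \in X_V$ is the other delicate point.
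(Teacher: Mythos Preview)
Your overall strategy is Lions' blueprint and matches the paper, but two steps are genuinely off.

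First, you cannot apply Brezis--Lieb to $\mathcal{H}(\cdot,|\nabla u_n|)$: Lemma~\ref{L.brezis-lieb} requires a.e.\ convergence, and you only have $u_n\to u$ a.e., not $\nabla u_n\to\nabla u$ a.e. The paper does \emph{not} split $\mu$ via Brezis--Lieb. It simply extracts a weak-$\ast$ limit $\bar\mu$ of $\mathcal{H}(\cdot,|\nabla v_n|)+V\mathcal{H}(\cdot,|v_n|)$ by boundedness, uses $\bar\mu$ only to run the reverse H\"older argument, and then obtains $\mu\ge \mathcal{H}(\cdot,|\nabla u|)+V\mathcal{H}(\cdot,|u|)$ from convexity (weak lower semicontinuity of $u\mapsto\int\phi[\mathcal{H}(x,|\nabla u|)+V\mathcal{H}(x,|u|)]\,\diff x$). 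The atomic part $\sum_i\mu_i\delta_{x_i}$ is defined directly as $\mu_i:=\mu(\{x_i\})$; one then adds the absolutely continuous lower bound and the atomic lower bound because they are mutually singular. Your inequality $\mu\ge \mathcal{H}(\cdot,|\nabla u|)+V\mathcal{H}(\cdot,|u|)+\tilde\mu$ with $\tilde\mu$ the weak-$\ast$ limit for $v_n$ is not available in general.

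Second, testing with shrinking cutoffs does \emph{not} by itself force $\tilde\nu$ to be purely atomic; that step needs the abstract reverse H\"older lemma (Lemma~\ref{L.reserveHolder}). The paper proves the specific inequality
\[
\|\phi\|_{L^{r(\cdot)}_{\bar\nu}(\RN)}\le C\,\|\phi\|_{L^{q(\cdot)}_{\bar\mu}(\RN)},\qquad \forall\,\phi\in C_c^\infty(\RN),
\]
with \emph{single} variable exponents $r(\cdot)\gg q(\cdot)$ (not the full $\mathcal{B}$-- and $\mathcal{H}$--Luxemburg norms), using Young-type tricks \eqref{young}, \eqref{younga} to strip off the second phase, together with Lemma~\ref{L.convergence} to pass to the limit in the measure-norms. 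Lemma~\ref{L.reserveHolder} then yields $\bar\nu=\sum_i\nu_i\delta_{x_i}$. Only \emph{after} atomicity is established does cutoff testing enter, and there the paper tests $S\|\phi_{i,\delta}u_n\|_{\mathcal{B}}\le\|\phi_{i,\delta}u_n\|$ with the \emph{original} sequence $u_n$ (not $v_n$), then sends $n\to\infty$ and $\delta\to0^+$ using Lemma~\ref{L.un.grad.p} for the $u_n\nabla\phi_{i,\delta}$ remainder; the two-sided power bounds of Proposition~\ref{prop.nor-mod.D} localized to $B_\delta(x_i)$ produce the $\min/\max$ form \eqref{T.ccp.nu_mu}. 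Your outline conflates the atomicity step with the $\nu_i$--$\mu_i$ estimate and misses the single-exponent reduction needed to invoke Lemma~\ref{L.reserveHolder}.
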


%===================STATEMENT OF THE SECOND CCP=========================%

The next result elucidates the possible loss of mass at infinity. 

\begin{theorem}\label{Theo.CCP.inf} 
Let	$(\calbf{O})$ and $(\calbf{C})$ hold, and let $\{u_n\}_{n\in \mathbb{N}}$ be the same sequence as that in Theorem~\ref{Theo.CCP1}. Set
\begin{align}
	\mu_\infty:=\lim _{R \rightarrow \infty} \limsup _{n \rightarrow \infty} \int_{B_{R}^{c}}\Big[ \mathcal{H}(x,|\nabla u_n|) +  V(x)\mathcal{H}(x,|u_n|)\Big]\diff x \label{muinf-def}
\end{align}
and
\begin{align}
	\nu_{\infty}:=\lim _{R \rightarrow \infty} \limsup_{n \rightarrow \infty} \int_{B_{R}^{c}}\mathcal{B}(x,|u_n|)\diff x . \label{nuinf-def}
\end{align}
Then 
\begin{align}
	\limsup_{n \to \infty}\int_{\mathbb{R}^N}\Big[\mathcal{H}(x,|\nabla u_n|) +  V(x)\mathcal{H}(x,|u_n|)\Big]\diff x
	=\mu(\mathbb{R}^N)+\mu_\infty\label{T.ccp.infinity.mu}
\end{align}
and
\begin{align}
	\limsup_{n \to \infty}\int_{\mathbb{R}^N}\mathcal{B}(x,|u_n|)\diff x=\nu(\mathbb{R}^N)+\nu_\infty.
	\label{T.ccp.infinity.nu}
\end{align}
Moreover, assume in addition that 
\begin{enumerate}
	\item[$(\calbf{E}_\infty)$] 
	For each $h \in \{p,q,r,s\}$, there exists $h_\infty \in (0,\infty)$ such that 
	$$\lim\limits_{|x|\to \infty}h(x)=h_\infty.$$
	%there exists $\xi_\infty \in (0,\infty)$ such that $\lim\limits_{|x|\to \infty}\xi(x)=\xi_\infty$, for $\xi \in \{r,s,p,q\}$.
\end{enumerate}
Then it holds
\begin{equation}\label{T.ccp.inf.nu_mu}
	S \min \left\{\nu_\infty^{\frac{1}{r_{\infty}}}, \nu_\infty^{\frac{1}{s_{\infty}}} \right\} \leq \max \left\{\mu_\infty^{\frac{1}{p_{\infty}}},\,\mu_\infty^{\frac{1}{q_{\infty}}} \right\}.
\end{equation} 
\end{theorem}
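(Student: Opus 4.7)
The plan is to prove the identities \eqref{T.ccp.infinity.mu}, \eqref{T.ccp.infinity.nu} first, and then the sharp inequality \eqref{T.ccp.inf.nu_mu}. For the identities, fix $R>0$ and pick $\phi_R\in C_c^{\infty}(\R^N)$ with $0\leq \phi_R\leq 1$, $\phi_R\equiv 1$ on $B_R$ and $\phi_R\equiv 0$ off $B_{2R}$. Set $\mu_n:=\mathcal{H}(\cdot,|\nabla u_n|)+V\mathcal{H}(\cdot,|u_n|)$ and decompose $\int_{\R^N}d\mu_n=\int\phi_R\,d\mu_n+\int(1-\phi_R)\,d\mu_n$. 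Since $\phi_R\in C_0(\R^N)$, the first term converges to $\int\phi_R\,d\mu$ as $n\to\infty$ by \eqref{mu_n to mu*}, and $\int\phi_R\,d\mu\uparrow \mu(\R^N)$ as $R\to\infty$ by monotone convergence (noting that $\mu$ is a finite positive Radon measure). For the second, the squeeze $\chi_{B_{2R}^c}\leq 1-\phi_R\leq \chi_{B_R^c}$ gives $\int_{B_{2R}^c}d\mu_n\leq \int(1-\phi_R)\,d\mu_n\leq \int_{B_R^c}d\mu_n$, so that taking $\limsup_n$ and then $\lim_R$ forces both sides to $\mu_\infty$ by \eqref{muinf-def}. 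This proves \eqref{T.ccp.infinity.mu}; \eqref{T.ccp.infinity.nu} follows identically with $\mathcal{B}(\cdot,|u_n|),\nu,\nu_\infty$ in place of $\mu_n,\mu,\mu_\infty$.

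For \eqref{T.ccp.inf.nu_mu}, I would use a complementary cut-off $\psi_R\in C^{\infty}(\R^N)$ with $0\leq \psi_R\leq 1$, $\psi_R\equiv 0$ on $B_R$, $\psi_R\equiv 1$ on $B_{2R}^c$ and $|\nabla\psi_R|\leq C/R$, and set $v_n:=\psi_R u_n\in X_V$, so that $v_n$ and $\nabla v_n$ vanish on $B_R$. The definition \eqref{def_S} yields $S\|v_n\|_{\mathcal{B}}\leq \|v_n\|$. From $\psi_R\leq 1$ and $\psi_R\equiv 1$ on $B_{2R}^c$, one has $\int_{B_{2R}^c}\mathcal{B}(x,|u_n|)\,dx\leq \rho_{\mathcal{B}}(v_n)\leq \int_{B_R^c}\mathcal{B}(x,|u_n|)\,dx$, whence $\alpha_R:=\limsup_n\rho_{\mathcal{B}}(v_n)\to \nu_\infty$ as $R\to\infty$. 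On the $V$-side, the estimate \eqref{Ineq2a} applied to $|\nabla v_n|\leq \psi_R|\nabla u_n|+|u_n\nabla\psi_R|$ together with $|v_n|\leq |u_n|$ gives, for any $\varepsilon>0$,
\[
	\rho_V(v_n)\leq (1+\varepsilon)\int_{B_R^c}\bigl[\mathcal{H}(x,|\nabla u_n|)+V(x)\mathcal{H}(x,|u_n|)\bigr]dx+C_\varepsilon\int_{\R^N}\mathcal{H}(x,|u_n\nabla\psi_R|)\,dx,
\]
with the error bounded by $C_\varepsilon[(C/R)^{p^{-}}+\|a\|_{\infty}(C/R)^{q^{-}}]\int_{\R^N}\mathcal{H}(x,|u_n|)\,dx$, which vanishes as $R\to\infty$ uniformly in $n$ (by boundedness of $\{u_n\}$ in $X_V$ combined with \eqref{Em}). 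The matching lower bound $\rho_V(v_n)\geq \int_{B_{2R}^c}[\mathcal{H}(x,|\nabla u_n|)+V(x)\mathcal{H}(x,|u_n|)]\,dx$ together with $\varepsilon\to 0$ then yields $\beta_R:=\limsup_n\rho_V(v_n)\to \mu_\infty$ as $R\to\infty$.

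The final step is to convert norms into modulars on $B_R^c$. Since $\supp(v_n)\subset B_R^c$, a direct computation from $\rho_{\mathcal{B}}(v_n/\|v_n\|_{\mathcal{B}})=1$ (analogous to the proof of Proposition~\ref{prop.nor-mod.D} but with the exponents taken over $B_R^c$) gives the localized bounds
\[
	\|v_n\|_{\mathcal{B}}\geq \min\bigl\{\rho_{\mathcal{B}}(v_n)^{1/r^{-}(R)},\,\rho_{\mathcal{B}}(v_n)^{1/s^{+}(R)}\bigr\},\quad \|v_n\|\leq \max\bigl\{\rho_V(v_n)^{1/p^{-}(R)},\,\rho_V(v_n)^{1/q^{+}(R)}\bigr\},
\]
where $r^{-}(R):=\inf_{|x|\geq R}r$, $s^{+}(R):=\sup_{|x|\geq R}s$ and analogously for $p^{-}(R),q^{+}(R)$. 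Under $(\calbf{E}_\infty)$, all of these localized exponents converge to their $\infty$-limits as $R\to\infty$. Inserting them into $S\|v_n\|_{\mathcal{B}}\leq \|v_n\|$ and passing $\limsup_n$ (which commutes with the continuous nondecreasing maps $x\mapsto x^{1/t}$) produces $S\min\{\alpha_R^{1/r^{-}(R)},\alpha_R^{1/s^{+}(R)}\}\leq \max\{\beta_R^{1/p^{-}(R)},\beta_R^{1/q^{+}(R)}\}$; letting $R\to\infty$ then gives \eqref{T.ccp.inf.nu_mu}. The principal obstacle is precisely this last passage: the global norm-modular bounds of Proposition~\ref{prop.nor-mod.D} are too coarse to recover the sharp exponents at infinity, so the localized versions above, combined with the uniform vanishing of the cross-term $u_n\nabla\psi_R$, are essential.
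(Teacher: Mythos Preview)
Your proof is correct and follows essentially the same strategy as the paper's (cut-off decomposition for \eqref{T.ccp.infinity.mu}--\eqref{T.ccp.infinity.nu}, and the localized modular--norm bounds applied to $S\|\psi_R u_n\|_{\mathcal{B}}\le\|\psi_R u_n\|$ for \eqref{T.ccp.inf.nu_mu}). The only fix needed is to cite \eqref{Xim} rather than \eqref{Em} for the uniform boundedness of $\int_{\R^N}\mathcal{H}(x,|u_n|)\,dx$, since \eqref{Em} presupposes $\essinf_{\R^N}V>0$; with that in place, your direct gradient-scaling bound $|\nabla\psi_R|\le C/R$ on the cross term is in fact a cleaner alternative to the paper's route via Lemma~\ref{L.un.grad.phiR}.
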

When $\essinf_{x \in \R^N}V(x)>0$, we can replace $X_V$ by $W_V^{1,\mathcal{H}}(\mathbb{R}^N)$ in Theorems~\ref{Theo.CCP1} and \ref{Theo.CCP.inf} as follows.
\begin{theorem} \label{Theo.CCP2}
	Let $(\calbf{A})$, $(\calbf{C})$ and $(\calbf{E}_\infty)$ hold, and let $V\in L^1_{\loc}\left(\R^N\right)$ satisfy $\essinf_{x \in \R^N}V(x)>0$. Then, the conclusions of Theorems \ref{Theo.CCP1} and \ref{Theo.CCP.inf} remain valid with $W_V^{1,\mathcal{H}}(\mathbb{R}^N)$ in place of $X_V$.
\end{theorem}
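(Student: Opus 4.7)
The plan is to observe that under the hypothesis $\essinf_{x\in\R^N}V(x)>0$, the space $W_V^{1,\mathcal{H}}(\R^N)$ inherits every structural and embedding property used in establishing Theorems~\ref{Theo.CCP1} and~\ref{Theo.CCP.inf} for $X_V$, so that those proofs can be repeated almost verbatim after substituting $W_V^{1,\mathcal{H}}(\R^N)$ for $X_V$.

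First, I would verify the functional-analytic prerequisites for $W_V^{1,\mathcal{H}}(\R^N)$ by mimicking the argument in Theorem~\ref{T.Xv-P}. With the Musielak function $\varphi(x,t):=(1+V(x))\mathcal{H}(x,t)$, the map $u\mapsto(u,\partial_{x_1}u,\ldots,\partial_{x_N}u)$ is a linear isometry from $W_V^{1,\mathcal{H}}(\R^N)$, equipped with the equivalent norm $\|u\|_{\varphi}+\big\||\nabla u|\big\|_{\mathcal{H}}$, into the separable reflexive product space $L^{\varphi}(\R^N)\times(L^{\mathcal{H}}(\R^N))^{N}$ with closed image; hence $W_V^{1,\mathcal{H}}(\R^N)$ is itself a separable reflexive Banach space. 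The lower bound $\essinf V>0$ further provides the continuous inclusion $W_V^{1,\mathcal{H}}(\R^N)\hookrightarrow W^{1,\mathcal{H}}(\R^N)$ recorded in \eqref{Em}--\eqref{infV>0}, which composed with Theorem~\ref{T.CE1} and the pointwise bound $\mathcal{B}(x,t)\leq C[\mathcal{H}(x,t)+\mathcal{G}^{\ast}(x,t)]$ from the proof of Theorem~\ref{T.CE2}(i) yields $W_V^{1,\mathcal{H}}(\R^N)\hookrightarrow L^{\mathcal{B}}(\R^N)$. Consequently the analogue of \eqref{def_S},
\[
S:=\inf_{\phi\in W_V^{1,\mathcal{H}}(\R^N)\setminus\{0\}}\frac{\|\phi\|}{\|\phi\|_{\mathcal{B}}},
\]
is strictly positive.

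Second, I would re-run the proofs of Theorems~\ref{Theo.CCP1} and~\ref{Theo.CCP.inf}. Those arguments rely on only four ingredients: (i) reflexivity of the ambient Sobolev-type space, which supplies the weakly convergent subsequence in \eqref{un_weak.conv}; (ii) the Sobolev-type bound $S\|\phi\|_{\mathcal{B}}\leq\|\phi\|$ applied to test functions $\phi=\varphi u_{n}$ for cut-offs $\varphi\in C_{c}^{\infty}(\R^{N})$ --- either shrinking around each candidate atom $x_{i}\in\mathscr{C}$ or of the form $1-\psi_{R}$ for the mass at infinity --- where admissibility $\varphi u_{n}\in W_V^{1,\mathcal{H}}(\R^{N})$ follows from the pointwise estimates $|\varphi u_{n}|\leq\|\varphi\|_{\infty}|u_{n}|$ and $|\nabla(\varphi u_{n})|\leq\|\varphi\|_{\infty}|\nabla u_{n}|+\|\nabla\varphi\|_{\infty}|u_{n}|$ together with the convexity of $\mathcal{H}$; (iii) the Brezis--Lieb decomposition of Lemma~\ref{L.brezis-lieb} applied to the modulars $\mathcal{H}(\cdot,|u_{n}|)$, $V(\cdot)\mathcal{H}(\cdot,|u_{n}|)$, and $\mathcal{B}(\cdot,|u_{n}|)$, each of which fits the Musielak framework of Subsection~\ref{Subsec.M.O.S space}; and (iv) standard reverse-H\"older bookkeeping, with the pointwise data $p(x_{i}),q(x_{i}),p^{\ast}(x_{i}),q^{\ast}(x_{i})$ and the asymptotic values $p_{\infty},q_{\infty},r_{\infty},s_{\infty}$ supplied by $(\calbf{E}_\infty)$ producing \eqref{T.ccp.nu_mu} and \eqref{T.ccp.inf.nu_mu}, respectively. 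All four ingredients are available in the $W_V^{1,\mathcal{H}}(\R^{N})$ setting.

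The main, and still minor, obstacle should arise at the tail step corresponding to \eqref{T.ccp.infinity.mu}--\eqref{T.ccp.inf.nu_mu}: one must check that the cut-offs $(1-\psi_{R})u_{n}$ remain admissible test functions in $W_V^{1,\mathcal{H}}(\R^{N})$ and that the $V$-weighted tail integrals are controlled uniformly as $R\to\infty$. Here the hypothesis $\essinf V>0$ is in fact stronger than $(\calbf{V}_1)$, since it provides a uniform positive lower bound on $V$ throughout $\R^{N}$ rather than only outside some ball $B_{R_{0}}$; this simplifies, rather than complicates, the tail analysis compared with the $X_V$ setting. With all ingredients in place, the proofs of Theorems~\ref{Theo.CCP1} and~\ref{Theo.CCP.inf} transfer with only cosmetic notational changes, yielding the conclusion.
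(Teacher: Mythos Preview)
Your proposal is correct and takes essentially the same approach as the paper: define $S$ with $W_V^{1,\mathcal{H}}(\mathbb{R}^N)$ in place of $X_V$, observe that the embedding $W_V^{1,\mathcal{H}}(\mathbb{R}^N)\hookrightarrow W^{1,\mathcal{H}}(\mathbb{R}^N)$ from \eqref{Em}--\eqref{infV>0} yields $S>0$ (the paper invokes the remark following Lemma~\ref{Lem.S+} for this), and then note that every step in the proofs of Theorems~\ref{Theo.CCP1} and~\ref{Theo.CCP.inf} goes through verbatim. The paper's own proof is a two-sentence sketch of exactly this; you have simply made the ingredient checklist explicit.
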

%====================CCP PROOFS========================%

\subsection{Proofs of the concentration-compactness principles }${}$

\vskip5pt

In order to prove Theorems \ref{Theo.CCP1}-\ref{Theo.CCP2}, we will need the following auxiliary results.

%====================AUXILIARY LEMMAS========================%

\begin{lemma}[\cite{hokimsim2019}] \label{L.convergence}
Let $\nu,\{\nu_n\}_{n\in\mathbb{N}}$ be nonnegative and finite Radon measures on $\mathbb{R}^N$ such that $\nu_n\overset{\ast }{\rightharpoonup } \nu$ in $\mathcal{R}(\mathbb{R}^N)$. Then, for any $m\in C_{+}(\mathbb{R}^N)$ it holds
\begin{equation*}
	\lim_{ n \to \infty} 	\|\phi\|_{L^{m(\cdot)}_{\nu_n}(\mathbb{R}^N)}= 	\|\phi\|_{L^{m(\cdot)}_{\nu}(\mathbb{R}^N)}, \quad \forall \phi\in C_c(\mathbb{R}^N).
\end{equation*}
\end{lemma}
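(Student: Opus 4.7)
The plan is to reduce the convergence of Luxemburg norms to a modular identity against the test function $|\phi/\lambda|^{m(\cdot)}$, which has compact support and is continuous (since $m\in C_+(\mathbb{R}^N)$), so it lies in $C_c(\mathbb{R}^N)\subset C_0(\mathbb{R}^N)$ and can be tested against the weakly-$*$ converging measures. Write $\lambda_n:=\|\phi\|_{L_{\nu_n}^{m(\cdot)}(\mathbb{R}^N)}$ and $\lambda:=\|\phi\|_{L_{\nu}^{m(\cdot)}(\mathbb{R}^N)}$; the case $\phi\equiv 0$ is trivial, so assume $\phi\not\equiv 0$. Since $\nu_n\overset{*}{\rightharpoonup}\nu$ on $C_0(\mathbb{R}^N)$, the Banach--Steinhaus theorem yields a uniform bound $M:=\sup_n \nu_n(\mathbb{R}^N)<\infty$. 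Using $|\phi|\le\|\phi\|_\infty\mathbf{1}_{\operatorname{supp}\phi}$, one checks that for $\tau\ge\max\{1,\|\phi\|_\infty\,(M+1)^{1/m^-}\}$ the integral $\int_{\mathbb{R}^N}|\phi/\tau|^{m(x)}\,d\nu_n$ is $\le 1$ for every $n$, hence $\{\lambda_n\}$ is bounded. It then suffices to show that every subsequential limit $\lambda_{n_k}\to\lambda_\ast$ equals $\lambda$.

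First assume $\lambda_\ast>0$. To prove $\lambda\le\lambda_\ast$, fix $\mu>\lambda_\ast$; eventually $\lambda_{n_k}<\mu$, so by monotonicity of the modular in the scaling parameter,
$$
\int_{\mathbb{R}^N}\bigl|\phi/\mu\bigr|^{m(x)}\,d\nu_{n_k}\le \int_{\mathbb{R}^N}\bigl|\phi/\lambda_{n_k}\bigr|^{m(x)}\,d\nu_{n_k}\le 1.
$$
Because $|\phi/\mu|^{m(\cdot)}\in C_c(\mathbb{R}^N)$, weak-$*$ convergence gives $\int|\phi/\mu|^{m(x)}\,d\nu\le 1$, whence $\lambda\le\mu$; letting $\mu\downarrow\lambda_\ast$ yields $\lambda\le\lambda_\ast$. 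For the reverse inequality $\lambda\ge\lambda_\ast$, use that $\lambda_{n_k}>0$ eventually together with the standard modular identity $\int|\phi/\lambda_{n_k}|^{m(x)}\,d\nu_{n_k}=1$ (valid whenever $\phi\ne 0$ in the corresponding Musielak--Orlicz space, analogous to Proposition~\ref{prop.nor-mod.D}(i)). I then split
$$
\int\bigl|\phi/\lambda_{n_k}\bigr|^{m(x)} d\nu_{n_k}-\int\bigl|\phi/\lambda_\ast\bigr|^{m(x)} d\nu
=\int\Bigl(\bigl|\phi/\lambda_{n_k}\bigr|^{m(x)}-\bigl|\phi/\lambda_\ast\bigr|^{m(x)}\Bigr) d\nu_{n_k}+\int\bigl|\phi/\lambda_\ast\bigr|^{m(x)}(d\nu_{n_k}-d\nu).
$$
The first term tends to $0$ because $\lambda_{n_k}\to\lambda_\ast>0$ and $m\in C_+$ imply uniform convergence of the integrand on the common compact set $\operatorname{supp}\phi$, combined with the uniform bound $\nu_{n_k}(\mathbb{R}^N)\le M$; the second term tends to $0$ by weak-$*$ convergence applied to the fixed test function $|\phi/\lambda_\ast|^{m(\cdot)}\in C_c(\mathbb{R}^N)$. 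Thus $\int|\phi/\lambda_\ast|^{m(x)}\,d\nu=1$, which forces $\lambda=\lambda_\ast$.

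Finally, in the edge case $\lambda_\ast=0$, for every $\eta>0$ we have $\lambda_{n_k}<\eta$ eventually, whence, as above, $\int|\phi/\eta|^{m(x)}\,d\nu_{n_k}\le 1$; weak-$*$ convergence yields $\int|\phi/\eta|^{m(x)}\,d\nu\le 1$ for every $\eta>0$, and monotone convergence as $\eta\downarrow 0$ forces $\phi=0$ $\nu$-a.e., giving $\lambda=0=\lambda_\ast$. Since every cluster point of $\{\lambda_n\}$ equals $\lambda$, the whole sequence converges.

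The principal obstacle is the simultaneous-limit step establishing $\lambda\ge\lambda_\ast$: one must pass to the limit in an integral where both the measure $\nu_{n_k}$ and the integrand $|\phi/\lambda_{n_k}|^{m(\cdot)}$ move with $k$. This is handled by the "freeze-and-compare" decomposition above, which relies crucially on the compact support of $\phi$ (to upgrade pointwise to uniform convergence of the integrands) and on the uniform total-mass bound for $\nu_{n_k}$. All remaining steps are routine consequences of the definition of the Luxemburg norm and the weak-$*$ characterization of $\mathcal{R}(\mathbb{R}^N)$ as the dual of $C_0(\mathbb{R}^N)$.
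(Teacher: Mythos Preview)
Your argument is correct. In the paper this lemma is not proved but simply quoted from \cite{hokimsim2019}, so there is no in-paper argument to compare against; your self-contained route via the uniform mass bound, subsequential limits, and the freeze-and-compare decomposition for the double limit is a standard and complete way to obtain the conclusion.
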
	

\begin{lemma}[\cite{hokimsim2019}]\label{L.reserveHolder}
Let $\mu,\nu$ be two finite and nonnegative Radon measures on $\mathbb{R}^N$, such that there exists a positive  constant $C$ holding
$$
\|\phi\|_{L_\nu^{t(\cdot)}(\mathbb{R}^N)}\leq C\|\phi\|_{L_\mu^{s(\cdot)}(\mathbb{R}^N)},\ \ \forall \phi\in C_c^\infty(\mathbb{R}^N),
$$
for some $s,t\in C_+(\mathbb{R}^N)$ satisfying $s(\cdot) \ll t(\cdot)$. Then, there exist an at most countable set $\{x_i\}_{i\in I}$ of distinct points in $\mathbb{R}^N$ and
$\{\nu_i\}_{i\in I}\subset (0,\infty)$, such that
$$
\nu=\sum_{i\in I}\nu_i\delta_{x_i}.
$$	
\end{lemma}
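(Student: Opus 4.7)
The plan is to adapt Lions' classical purely-atomic argument to the variable-exponent setting by localizing to regions where $s$ and $t$ are nearly constant. The first and most delicate step is to promote the hypothesized smooth-test inequality to an inequality on characteristic functions; afterwards the argument proceeds by the usual partition-refinement scheme.

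First I would extend the given inequality from $C_c^\infty(\mathbb{R}^N)$ to characteristic functions of Borel sets. Since $\mu,\nu$ are finite Radon measures on $\mathbb{R}^N$, they are outer regular, so every Borel $E$ is approximated from outside by open sets $U\supset E$, and in turn each open $U$ is the monotone limit of smooth cutoffs $\phi_n\in C_c^\infty(U)$ with $\phi_n\uparrow \chi_U$. Passing to the limit in the modulars via the monotone convergence theorem and then invoking Proposition~\ref{prop.norm-modular} applied to the measures $\mu$ and $\nu$, one obtains
$$\|\chi_E\|_{L^{t(\cdot)}_\nu(\mathbb{R}^N)}\;\leq\;C\,\|\chi_E\|_{L^{s(\cdot)}_\mu(\mathbb{R}^N)}$$
for every Borel set $E\subset\mathbb{R}^N$. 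This step is the main technical obstacle because the variable-exponent Luxemburg norm is not elementarily positively homogeneous, so one cannot simply rescale; one has to work at the modular level and use outer regularity in both measures simultaneously.

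Next I would localize. Given $x_0\in\mathbb{R}^N$, by continuity of $s,t$ and the strict pointwise gap $s(\cdot)\ll t(\cdot)$, choose $r=r(x_0)>0$ small enough that $s_r^+:=\sup_{B_r(x_0)} s \;<\; t_r^-:=\inf_{B_r(x_0)} t$. For any Borel $E\subset B_r(x_0)$ with $\mu(E),\nu(E)\leq 1$, Proposition~\ref{prop.norm-modular} yields $\nu(E)^{1/t_r^-}\leq \|\chi_E\|_{L^{t(\cdot)}_\nu}$ and $\|\chi_E\|_{L^{s(\cdot)}_\mu}\leq \mu(E)^{1/s_r^+}$. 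Combined with the previous step, this gives the reverse-Hölder-type estimate
$$\nu(E)\;\leq\;\tilde C\,\mu(E)^{\alpha},\qquad \alpha:=\frac{t_r^-}{s_r^+}>1,$$
for every sufficiently small Borel $E\subset B_r(x_0)$.

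Finally I would run Lions' partition argument locally and conclude by a covering. For any Borel $E\subset B_r(x_0)$ and any finite partition $E=\bigsqcup_{k=1}^n E_k$,
$$\nu(E)\;=\;\sum_{k=1}^n\nu(E_k)\;\leq\;\tilde C\sum_{k=1}^n\mu(E_k)^{\alpha}\;\leq\;\tilde C\,\bigl(\max_k\mu(E_k)\bigr)^{\alpha-1}\mu(E).$$
Set $A:=\{y\in\mathbb{R}^N:\mu(\{y\})>0\}$, which is at most countable since $\mu$ is finite. On $B_r(x_0)\setminus A$ the measure $\mu$ is atomless, so for any $\varepsilon>0$ one can take a finite Borel partition with $\max_k\mu(E_k)<\varepsilon$; sending $\varepsilon\to 0$ forces $\nu(B_r(x_0)\setminus A)=0$. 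By second countability of $\mathbb{R}^N$, cover $\mathbb{R}^N\setminus A$ by countably many such balls to deduce $\nu(\mathbb{R}^N\setminus A)=0$. Writing $A=\{x_i\}_{i\in I}$ and $\nu_i:=\nu(\{x_i\})$ (discarding indices with $\nu_i=0$) yields the desired decomposition $\nu=\sum_{i\in I}\nu_i\,\delta_{x_i}$.
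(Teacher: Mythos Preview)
The paper does not give its own proof of this lemma: it is quoted verbatim from \cite{hokimsim2019} and used as a black box in the proof of Theorem~\ref{Theo.CCP1}. So there is no in-paper argument to compare against.

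Your proposed argument is essentially the standard one underlying this type of result and is correct in outline. The three steps --- extending the test-function inequality to characteristic functions of Borel sets via outer regularity and monotone convergence of modulars, localizing to a ball where $\sup s<\inf t$ to obtain $\nu(E)\le \tilde C\,\mu(E)^{\alpha}$ with $\alpha>1$ for small $E$, and then running Lions' partition argument on the $\mu$-atomless part --- are all sound. Two minor points worth tightening: in Step~2 you need $\|\chi_E\|_{L^{t(\cdot)}_\nu}\le 1$ to invoke Proposition~\ref{prop.norm-modular}(iii) on the $\nu$-side, and this does not follow from $\mu(E)\le 1$ alone; it does follow once $\mu(E)$ is small enough (since then $\|\chi_E\|_{L^{s(\cdot)}_\mu}\le \mu(E)^{1/s_r^+}$ forces $\|\chi_E\|_{L^{t(\cdot)}_\nu}\le C\mu(E)^{1/s_r^+}<1$), which is all that Step~3 requires. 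In Step~3 the fact that a finite atomless Borel measure on $\mathbb{R}^N$ admits, for any $\varepsilon>0$, a finite Borel partition of any set into pieces of $\mu$-measure below $\varepsilon$ is standard but perhaps deserves a one-line justification. With these small clarifications your proof goes through.
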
	

\begin{lemma}\label{L.un.grad.p}
Let $(\calbf{O})$ hold, and let $u_n \rightharpoonup u$ in $X_V$. Let $\phi \in C_c^\infty(\RN)$ be such that $0 \leq \phi \leq 1$, $|\nabla \phi| \leq 2$ in $\RN$, $\phi \equiv 1$ on $B_{1/2}$ and $\spp (\phi) \subset B_1$. For each $x_i\in\RN$ and $\delta>0$, define $\phi_{i,\delta}(x):=\phi\left(\frac{x-x_i}{\delta}\right)$ for $x \in \RN$. Then, we have
\begin{align}\label{limH(un.nabla phi.)}
	\lim_{\delta \to 0^+}\lim_{n \to \infty} \int_{\RN} \mathcal{H}\left(x,|u_n \nabla \phi_{i,\delta}|\right) \diff x=0.
\end{align}
\end{lemma}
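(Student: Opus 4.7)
The plan is to split $\mathcal{H}(x,|u_n\nabla\phi_{i,\delta}|)$ into its $p$- and $q$-components and take the two limits in succession: first $n\to\infty$ at fixed $\delta$ via the local compact embedding, then $\delta\to 0$ via a variable-exponent Hölder pairing dictated by Sobolev scaling. The main obstacle is the tension between the blow-up $|\nabla\phi_{i,\delta}|\sim\delta^{-1}$ and the shrinking support $B_\delta(x_i)$; the Sobolev-scaled pairing (with the scale-invariant $L^N$-integral of $|\nabla\phi_{i,\delta}|$) is what turns this apparent singularity into absolute continuity of the critical-growth integrals of $u$ over the shrinking balls.

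Concretely, since $|\nabla\phi_{i,\delta}|\le 2/\delta$ and $\nabla\phi_{i,\delta}$ is supported in $\overline{B_\delta(x_i)}$,
\begin{equation*}
\int_{\RN}\mathcal{H}(x,|u_n\nabla\phi_{i,\delta}|)\diff x = \int_{B_\delta(x_i)}|u_n|^{p(x)}|\nabla\phi_{i,\delta}|^{p(x)}\diff x + \int_{B_\delta(x_i)}a(x)|u_n|^{q(x)}|\nabla\phi_{i,\delta}|^{q(x)}\diff x.
\end{equation*}
Fix $\delta>0$. Theorem~\ref{T.mainE}(ii) with $\mathcal{B}=\mathcal{H}$ yields the compact embedding $X_V\hookrightarrow\hookrightarrow L^{\mathcal{H}}(B_\delta(x_i))$, so $\int_{B_\delta(x_i)}\mathcal{H}(x,|u_n-u|)\diff x\to 0$. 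Using~\eqref{Ineq2a} to write $|u_n|^{p(x)}\le(1+\varepsilon)|u|^{p(x)}+C_\varepsilon|u_n-u|^{p(x)}$, together with the $\delta$-dependent pointwise bound $|\nabla\phi_{i,\delta}|^{p(x)}\le\max\{(2/\delta)^{p^-},(2/\delta)^{p^+}\}$, the $|u_n-u|$ piece vanishes as $n\to\infty$; sending $\varepsilon\to 0$ then gives
\begin{equation*}
\limsup_{n\to\infty}\int_{B_\delta(x_i)}|u_n|^{p(x)}|\nabla\phi_{i,\delta}|^{p(x)}\diff x \le \int_{B_\delta(x_i)}|u|^{p(x)}|\nabla\phi_{i,\delta}|^{p(x)}\diff x,
\end{equation*}
and analogously for the weighted $q$-piece.

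Taking now $\delta\to 0$, Proposition~\ref{prop.Holder} applied with the conjugate pair $\bigl(p^*(\cdot)/p(\cdot),N/p(\cdot)\bigr)$ gives
\begin{equation*}
\int_{B_\delta(x_i)}|u|^{p(x)}|\nabla\phi_{i,\delta}|^{p(x)}\diff x \le 2\bigl\||u|^{p(\cdot)}\bigr\|_{L^{p^*(\cdot)/p(\cdot)}(B_\delta(x_i))}\bigl\||\nabla\phi_{i,\delta}|^{p(\cdot)}\bigr\|_{L^{N/p(\cdot)}(B_\delta(x_i))}.
\end{equation*}
The second factor is bounded uniformly in $\delta$: the change of variable $y=(x-x_i)/\delta$ gives
\begin{equation*}
\int_{B_\delta(x_i)}|\nabla\phi_{i,\delta}(x)|^{N}\diff x=\int_{B_1}|\nabla\phi(y)|^{N}\diff y,
\end{equation*}
a constant depending only on $\phi$ and $N$, so Proposition~\ref{prop.norm-modular} bounds the norm by a $\delta$-independent constant. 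The first factor tends to $0$: Theorem~\ref{T.CE1} gives $u\in L^{\mathcal{G}^*}(\RN)$, in particular $|u|^{p^*(\cdot)}\in L^1(\RN)$, so by absolute continuity of the Lebesgue integral $\int_{B_\delta(x_i)}|u|^{p^*(x)}\diff x\to 0$. The weighted $q$-term is handled verbatim with the pair $\bigl(q^*(\cdot)/q(\cdot),N/q(\cdot)\bigr)$ after regrouping $a(x)|u|^{q(x)}=\bigl(a(x)^{1/q(x)}|u|\bigr)^{q(x)}$, the vanishing factor now being $a(\cdot)^{q^*(\cdot)/q(\cdot)}|u|^{q^*(\cdot)}\in L^1(\RN)$, again from Theorem~\ref{T.CE1}.
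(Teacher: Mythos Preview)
Your proof is correct and follows essentially the same route as the paper's: pass the inner limit $n\to\infty$ using the local compact embedding $X_V\hookrightarrow\hookrightarrow L^{\mathcal{H}}_{\loc}(\RN)$, then control the resulting integral in $u$ by the H\"older pairing with exponents $\bigl(p^*(\cdot)/p(\cdot),N/p(\cdot)\bigr)$ (resp.\ $\bigl(q^*(\cdot)/q(\cdot),N/q(\cdot)\bigr)$), exploiting the scale invariance of $\int|\nabla\phi_{i,\delta}|^N\diff x$ and the absolute continuity of $\int_{B_\delta(x_i)}\mathcal{G}^*(x,|u|)\diff x$. The only cosmetic difference is that the paper states the inner limit as an equality directly from the compact embedding, whereas you obtain a limsup inequality via~\eqref{Ineq2a}; since the integrand is nonnegative this is entirely sufficient.
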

\begin{proof}
By Theorem~\ref{T.mainE} (ii), it holds
\begin{equation}\label{PL3.6-1}
	\lim_{n \to \infty} \int_{\RN} \mathcal{H}(x,|u_n \nabla \phi_{i,\delta}|) \diff x = \lim_{n \to \infty} \int_{B_{\delta}(x_i)} \mathcal{H}(x,|u_n \nabla \phi_{i,\delta}|) \diff x = \int_{B_{\delta}(x_i)} \mathcal{H}(x,|u \nabla \phi_{i,\delta}|) \diff x.
\end{equation}
Let $\mathcal{G}^\ast$ be given in \eqref{def_G*}. Then by Theorem \ref{T.mainE} (i), $u\Big|_{B_\delta(x_i)} \in L^{\mathcal{G}^\ast}(B_\delta(x_i))$. Using this fact and Proposition \ref{prop.Holder} we can estimate
\begin{align}
	\notag
	\int_{B_\delta(x_i)}  \mathcal{H}\left(x,|u\nabla \phi_{i,\delta}|\right)  \diff x & =\int_{B_\delta(x_i)} \left[ |u\nabla \phi_{i,\delta}|^{p(x)}+a(x)|u\nabla \phi_{i,\delta}|^{q(x)} \right]\diff x  \\
	&\leq
	2\left\||u|^{p}\right\|_{L^{\frac{p^\ast(\cdot)}{p(\cdot)}}(B_\delta(x_i))}
	\left\||\nabla \phi_{i,\delta}|^{p} \right\|_{L^{\frac{N}{p(\cdot)}}
		(B_\delta(x_i))} \notag \\
	&  \hspace{2cm} +2\left\|a|u|^{q}\right\|_{L^{\frac{q^\ast(\cdot)}{q(\cdot)}}(B_\delta(x_i))} \left\||\nabla \phi_{i,\delta}|^{q} \right\|_{L^{\frac{N}{q(\cdot)}}(B_\delta(x_i))}.
	\label{u.grad.phi}
\end{align}
Meanwhile, for $m \in \{p,q\}$ it holds
\begin{align*}
	\left\||\nabla \phi_{i,\delta}|^{m} \right\|_{L^{\frac{N}{m(\cdot)}}
		(B_\delta(x_i))}  \leq 1+\left(\int_{B_\delta(x_i)}|\nabla \phi_{i,\delta}(x)|^N \diff x\right)^{\frac{m^+}{N}} = 1+\left(\int_{B_1}|\nabla \phi(y) |^N \diff y \right)^{\frac{m^+}{N}}
\end{align*}
in view of Proposition~\ref{prop.norm-modular}. From this and \eqref{u.grad.phi}, we deduce
\begin{equation*} \label{H(uphi)}
	\lim_{\delta \to 0^+} \int_{B_\delta(x_i)} \mathcal{H}\left(x,|u\nabla \phi_{i,\delta}|\right) \diff x =0,
\end{equation*}
i.e., \eqref{limH(un.nabla phi.)} holds.
\end{proof}

\begin{lemma}\label{L.un.grad.phiR} 
Let $(\calbf{O})$ hold, and let $u_n \rightharpoonup u$ in $X_V$. Let $\psi \in C^\infty(\RN)$ be such that $0 \leq \psi \leq 1$,  $|\nabla \psi| \leq 2$ in $ \RN$, $\psi \equiv 0$ on $B_{1}$ and $\psi \equiv 1 $ on $B_2^c$. For each $R>0$, define $\phi_R(x):=\psi\left(\frac{x}{R}\right)$ for $x \in \RN$. Then, we have
\begin{align}\label{un.grad.phi.R}
	\lim_{R \to \infty}\lim_{ n \to \infty} \int_{B^c_R} \mathcal{H}\left(x,|u_n\nabla \phi_R|\right)\, \diff x =0.
\end{align}
\end{lemma}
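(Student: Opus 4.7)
The plan is to exploit the fact that $|\nabla \phi_R|\leq 2/R$ is uniformly small on its support for large $R$, which makes the situation structurally simpler than that of Lemma~\ref{L.un.grad.p}: there the cutoff $\phi_{i,\delta}$ has gradient of order $1/\delta$ which blows up as $\delta\to 0^+$, forcing a H\"older--type argument combined with the critical embedding; here the smallness of $\nabla\phi_R$ lets one avoid such an argument entirely.

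First I would observe that $\psi\equiv 0$ on $B_1$ forces $\nabla\phi_R$ to be supported in the annulus $A_R:=B_{2R}\setminus B_R$, and the chain rule gives $|\nabla\phi_R(x)|=R^{-1}|\nabla\psi(x/R)|\leq 2/R$ pointwise. For $R\geq 2$ we have $2/R\leq 1$, hence $(2/R)^{p(x)}\leq (2/R)^{p^-}$ and $(2/R)^{q(x)}\leq (2/R)^{q^-}$ pointwise on $\overline{A_R}$. Substituting this into $\mathcal{H}(x,|u_n\nabla\phi_R|)$ and treating the two terms of $\mathcal{H}$ separately would yield
\begin{equation*}
\int_{B_R^c}\mathcal{H}(x,|u_n\nabla\phi_R|)\diff x \leq \max\bigl\{(2/R)^{p^-},(2/R)^{q^-}\bigr\}\int_{\RN}\mathcal{H}(x,|u_n|)\diff x.
\end{equation*}

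Next I would invoke the boundedness of $\{u_n\}_{n\in\N}$ in $X_V$ together with the continuous embedding $X_V\hookrightarrow W^{1,\mathcal{H}}(\RN)$ from Theorem~\ref{T.Xv-P}, and use Proposition~\ref{Prop_m-n_W(1,H)} to produce an $n$-independent constant $M>0$ satisfying $\int_{\RN}\mathcal{H}(x,|u_n|)\diff x\leq M$ for all $n$. This delivers a bound of the form $M\cdot \max\{(2/R)^{p^-},(2/R)^{q^-}\}$ which is independent of $n$ and vanishes as $R\to\infty$. Taking $\limsup$ in $n$ preserves the bound, and then letting $R\to\infty$ gives \eqref{un.grad.phi.R}.

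I do not expect a genuine obstacle: the only subtlety is the bookkeeping of variable exponents---in particular justifying $(2/R)^{p(x)}\leq (2/R)^{p^-}$ once $R\geq 2$, and ensuring that the modular bound is truly $n$-independent. Since $|\nabla\phi_R|$ is uniformly small on its support and the smallest exponent appearing in $\mathcal{H}(x,\cdot)$ is $p^-$, no compactness argument is required and the weak convergence $u_n\rightharpoonup u$ plays no role (mere boundedness of $\{u_n\}$ in $X_V$ suffices).
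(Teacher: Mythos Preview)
Your argument is correct and genuinely simpler than the paper's. The paper mirrors the proof of Lemma~\ref{L.un.grad.p}: it first passes to the limit in $n$ via the local compact embedding of Theorem~\ref{T.mainE}(ii), reducing to $\int_{B_{2R}\setminus\overline B_R}\mathcal H(x,|u\nabla\phi_R|)\diff x$, and then applies H\"older with the critical pairing $\bigl(\tfrac{p^\ast}{p},\tfrac{N}{p}\bigr)$ and $\bigl(\tfrac{q^\ast}{q},\tfrac{N}{q}\bigr)$; the $|\nabla\phi_R|^m$ factors are shown to be bounded in $L^{N/m(\cdot)}$ uniformly in $R$ by a change of variables, while the $|u|^p$ and $a|u|^q$ factors tend to zero because $u\in L^{\mathcal G^\ast}(\RN)$ and the annulus escapes to infinity. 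You instead exploit the pointwise smallness $|\nabla\phi_R|\leq 2/R$ directly, which the paper's approach does \emph{not} use, and thereby sidestep both the compact embedding and the H\"older step; as you observe, only boundedness of $\{u_n\}$ in $X_V$ is needed, not the weak convergence. Your route is more elementary and yields a slightly stronger conclusion (uniform in $n$ before passing to the limit), while the paper's route has the virtue of parallelism with Lemma~\ref{L.un.grad.p}, where $|\nabla\phi_{i,\delta}|\sim\delta^{-1}$ blows up and your shortcut is unavailable.
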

\begin{proof}
By Theorem~\ref{T.mainE} again, it holds
\begin{align*}%\label{JnR2}
	\lim_{n \to \infty } \int_{B_R^c}  \mathcal{H}\left(x,|u_n \nabla \phi_R|\right)\diff x = \int_{B_{2R}\setminus \overline{B}_R} \mathcal{H}(x,\big|u\nabla \phi_R\big|)\diff x. 
\end{align*}
As in the proof of Lemma~\ref{L.un.grad.p}, we have that $u\in L^{\mathcal{G}^\ast}(\RN)$ and 
\begin{multline}\label{u.grad.phi.R}
	\int_{B_{2R}\setminus \overline{B}_R} \mathcal{H}(x,\big|u\nabla \phi_R\big|)\diff x \leq 2\big\||u|^{p}\big\|_{L^{\frac{p^\ast(\cdot)}{p(\cdot)}}(B_{2R}\setminus \overline{B}_{R})}\big\||\nabla\phi_R|^{p}\big\|_{L^{\frac{N}{p(\cdot)}}(B_{2R}\setminus \overline{B}_{R})}\\ + 2\big\|a|u|^{q}\big\|_{L^{\frac{q^\ast(\cdot)}{q(\cdot)}}(B_{2R}\setminus\overline{B}_{R})}\big\||\nabla\phi_R|^{q}\big\|_{L^{\frac{N}{q(\cdot)}}(B_{2R}\setminus \overline{B}_{R})}.
\end{multline}
Meanwhile, for $m\in\{p,q\}$ by Proposition~\ref{prop.norm-modular} we obtain
\begin{align*} 
	\left\||\nabla\phi_R|^{m}\right\|_{L^{\frac{N}{m(\cdot)}}(B_{2R}\setminus \overline{B}_{R})} \leq 1+ \left(\int_{B_{2R}\setminus \overline{B}_{R}}|\nabla\phi_R|^N \diff x \right)^{\frac{m^+}{N}} &\leq 1+\left(\frac{2}{R}|B_{2R}\setminus \overline{B}_R|^{1/N}\right)^{m^+} \leq C ,\ \forall R>0,
\end{align*}
where $C$ depends only on $N$, $p^+$ and $q^+$. Combining this with  \eqref{u.grad.phi.R} we get 
\begin{equation*}
	\lim_{R \rightarrow \infty}\int_{B_{2R}\setminus \overline{B}_R} \mathcal{H}(x,\big|u\nabla \phi_R\big|)\diff x=0,
\end{equation*}
i.e, \eqref{un.grad.phi.R} holds.
\end{proof}

%====================PROOF OF CCP 1========================%
We are now in a position to prove main results of this section.
\begin{proof}[\rm \textbf{Proof of Theorem~\ref{Theo.CCP1}}]
Set $v_n:=u_n-u$ for $n \in \N$. Then,
\begin{equation}\label{T.conv.of.v_n-w}
	v_n \rightharpoonup \quad 0 \quad \text{in}\quad  X_V,
\end{equation}
and hence, by Theorem~\ref{T.mainE} (ii), up to a subsequence we have
\begin{equation}\label{T.conv.of.v_n}
			v_n(x) \to  \quad 0 \quad \text{a.a.}\quad  x\in \mathbb{R}^N.
\end{equation}
Using this and Lemma~\ref{L.brezis-lieb}, we obtain
$$\lim_{n\to\infty}\int_{\mathbb{R}^N} \Big|\mathcal{B}(x,|u_n|)
-\mathcal{B}(x,|v_n|)-\mathcal{B}(x,|u|)\Big|\diff x=0.$$
From this and \eqref{T.conv.of.v_n} we deduce 
$$\lim_{n\to\infty}\int_{\mathbb{R}^N} \Big[\phi\mathcal{B}(x,|u_n|)
-\phi\mathcal{B}(x,|v_n|)\Big]\diff x=\int_{\mathbb{R}^N}\phi\mathcal{B}(x,|u|)\diff x,\quad \forall \phi \in C_0(\mathbb{R}^N).$$
Hence
\begin{equation}\label{T.w*-bar.nu}
	\bar{\nu}_n:=\mathcal{B}(\cdot,|v_n|)\overset{\ast }{\rightharpoonup }\bar{\nu}:=\nu-\mathcal{B}(\cdot,|u|) \quad \text{in} \ \ \mathcal{R}(\mathbb{R}^N).
\end{equation}	
On the other hand, it is clear that the sequence of $\bar{\mu}_n:= \mathcal{H}(\cdot,| \nabla v_n|) +  V \mathcal{H}(\cdot,|v_n|)$ ($n \in \mathbb{N}$) is bounded in $L^{1}(\mathbb{R}^N)$. Thus, up to a subsequence, we have
\begin{equation}\label{T.w*-bar.mu}
	\bar{\mu}_n\overset{\ast }{\rightharpoonup } \bar{\mu} \quad \text{in} \ \ \mathcal{R}(\mathbb{R}^N),
\end{equation}
for some nonnegative finite Radon measure $\bar{\mu}$ on $\mathbb{R}^N$. 
We claim that there is $C>0$ such that
\begin{equation} \label{CCP1.St1}
	\|\phi\|_{L_{\bar{\nu}}^{r(\cdot)}(\mathbb{R}^N)} \leq C 	\|\phi\|_{L_{\bar{\mu}}^{q(\cdot)}(\mathbb{R}^N)}, \, \forall \phi \in C_c^\infty(\mathbb{R}^N).
\end{equation}
To this end, let $\phi \in C_c^\infty (\mathbb{R}^N)$ be arbitrary but fixed. If $\|\phi\|_{L^{r(\cdot)}_{\bar{\nu}}(\mathbb{R}^N)}=0$, then \eqref{CCP1.St1} holds automatically. If $\|\phi\|_{L^{r(\cdot)}_{\bar{\nu}}(\mathbb{R}^N)}>0$, then by
\begin{equation}\label{rho_n}
	\lim\limits_{n \to \infty} \|\phi\|_{L^{r(\cdot)}_{\bar{\nu}_n}(\mathbb{R}^N)} =\|\phi\|_{L^{r(\cdot)}_{\bar{\nu}}(\mathbb{R}^N)}
\end{equation}
(see Lemma~\ref{L.convergence}), we may assume that $\psi_n:=\|\phi\|_{L^{r(\cdot)}_{\bar{\nu}_n}(\mathbb{R}^N)}>0$ for all $n \in \mathbb{N}$. Clearly, $\phi v_n \in X_V$ for all $n \in \mathbb{N}$, and thus, by \eqref{def_S}, we have
\begin{equation}\label{S.ineq.1}
	S\|\phi v_n\|_{\mathcal{B}} \leq \|\phi v_n\|, \,\, \forall n \in \mathbb{N}.
\end{equation}
From the boundedness of $\{v_n\}_{n \in \N}$ in $X_V$ and Theorem \ref{T.mainE},  it holds that
\begin{equation}\label{vn_L}
	L:=1+\max \bigg\{\sup_{n\in \mathbb{N}}\int_{\mathbb{R}^N} \mathcal{B}(x,|v_n|) \diff x,\, \sup_{n\in \mathbb{N}}\int_{\mathbb{R}^N} \left[\mathcal{H}(x,| \nabla v_n|) +  V(x)\mathcal{H}(x,| v_n|)\right] \diff x \bigg \}\in [1,\infty).
\end{equation} 
Thus, by using Proposition~\ref{prop.norm-modular}, \eqref{young} and \eqref{vn_L} of $L$, we find $C_L>1$ such that
\begin{align*}
	1& =\int_{\mathbb{R}^N} \left|\frac{\phi }{\psi_n}\right|^{r(x)} \diff \bar{\nu}_n=\int_{\mathbb{R}^N} \left|\frac{\phi }{\psi_n}\right|^{r(x)}\left(c_1(x)|v_n|^{r(x)}+c_2(x)a(x)^{\frac{s(x)}{q(x)}}|v_n|^{s(x)} \right)\diff x\\
	&\leq \int_{\mathbb{R}^N} c_1(x)\left|\frac{\phi v_n}{\psi_n}\right|^{r(x)} \diff x +  \int_{\mathbb{R}^N}c_2(x) a(x)^{\frac{s(x)}{q(x)}}|v_n|^{s(x)} \left(\frac{1}{2L} + C_L \left|\frac{\phi }{\psi_n}\right|^{s(x)} \right)  \diff x \\
	&\leq C_L \int_{\mathbb{R}^N}  \mathcal{B}\left(x, \left|\frac{\phi v_n}{\psi_n}\right|\right)\diff x +\frac{1}{2}, \,\, \forall n \in \mathbb{N}.
\end{align*}
Consequently, it holds
\begin{align*}
	\int_{\mathbb{R}^N}  \mathcal{B}\left(x, \left|\frac{(2C_L)^{\frac{1}{r^-}}\phi v_n}{\psi_n}\right|\right)\diff x \geq 1, \,\, \forall n \in \mathbb{N}.
\end{align*}
By Proposition~\ref{prop.nor-mod.D}, the last inequality yields
\begin{align*}
	0<	(2C_L)^{\frac{-1}{r^-}}\|\phi\|_{L^{r(\cdot)}_{\bar{\nu}_n}(\mathbb{R}^N)} \leq \|\phi v_n\|_{\mathcal{B}}, \,\, \forall n \in \mathbb{N}.
\end{align*}
From this and \eqref{rho_n} we obtain
\begin{equation} \label{lim.phivn}
	0< (2C_L)^{\frac{-1}{r^-}} \|\phi\|_{L^{r(\cdot)}_{\bar{\nu}}(\mathbb{R}^N)} \leq \liminf_{n \to \infty}  \|\phi v_n\|_{\mathcal{B}}.
\end{equation}

Let $\beta_n:=\|\phi v_n\|$ for $n\in \mathbb{N}$. Obviously, $\{\beta_n\}_{n \in \N}$ is bounded in $\R$; hence, along a subsequence we have
\begin{equation} \label{lim.beta_n}
	\lim_{n \to \infty} \beta_n =\bar{\beta}.
\end{equation}
By \eqref{S.ineq.1} and \eqref{lim.phivn}, it holds $\bar{\beta}>0$; hence  we may assume that $\beta_n>0$ for all $n \in \mathbb{N}$. In view of \eqref{modular-norm.XV} we have
\begin{align}\label{betan.1}
	\notag
	1& = \int_{\mathbb{R}^N} \mathcal{H}\left(x,\left|\frac{ v_n\nabla \phi +\phi \nabla v_n}{\beta_n}\right|\right)\diff x  +  \int_{\mathbb{R}^N} V(x) \mathcal{H}\left(x,\left|\frac{\phi v_n}{\beta_n}\right|\right)\diff x    \\ 
	&\leq
	2^{q^+}\int_{\mathbb{R}^N} \mathcal{H}\left(x,\left|\frac{\phi \nabla v_n}{\beta_n}\right|\right)\diff x   + 2^{q^+}\int_{\mathbb{R}^N} \mathcal{H}\left(x,\left|\frac{v_n\nabla \phi }{\beta_n}\right|\right)\diff x + \int_{\mathbb{R}^N}V(x) \mathcal{H}\left(x,\left|\frac{\phi v_n}{\beta_n}\right|\right)\diff x.
\end{align}
Note that we have
\begin{equation*}
	|t|^{p(x)}\leq \eps+\left(1+\eps^{-(q-p)^+}\right)|t|^{q(x)},\quad \forall \eps>0,\ \forall t\in\R,\ \forall x\in \RN,
\end{equation*}
and therefore,
\begin{equation}\label{younga}
	\mathcal{H}(x,|t\eta|) \leq \eps |t|^{p(x)} + \left(1+\eps^{-(q-p)^+}\right) |\eta|^{q(x)} \mathcal{H}(x,|t|),\quad \forall \eps>0,\ \forall t,\eta\in\R,\ \forall x\in \RN.
\end{equation}
By means of \eqref{younga}, we find $\widetilde{C}_L>1$ depending only on $L$ and data such that
\begin{align*}
	\int_{\mathbb{R}^N} \mathcal{H}\left(x,\left|\frac{\phi \nabla v_n}{\beta_n}\right|\right)\diff x \leq \frac{1}{2^{q^++2}L}\int_{\mathbb{R}^N}|\nabla v_n|^{p(x)}\, \diff x+ \widetilde{C}_L 	\int_{\mathbb{R}^N} \left|\frac{\phi}{\beta_n}\right|^{q(x)} \mathcal{H}\left(x,|\nabla v_n|\right)\, \diff x
\end{align*}
and 
\begin{align*}
	\int_{\mathbb{R}^N}V(x) \mathcal{H}\left(x,\left|\frac{\phi v_n}{\beta_n}\right|\right)\,\diff x \leq \frac{1}{4L} \int_{\mathbb{R}^N}V(x)| v_n|^{p(x)}\, \diff x +  \widetilde{C}_L \int_{\mathbb{R}^N} \left|\frac{\phi}{\beta_n}\right|^{q(x)}V(x) \mathcal{H}\left(x,|v_n|\right)\, \diff x
\end{align*}
for all $n\in\N$. Utilizing the last two inequalities and recalling \eqref{vn_L}, we derive from \eqref{betan.1} that
\begin{align}\label{betan.2}
	\frac{1}{2} \leq 2^{q^+}\widetilde{C}_L \int_{\mathbb{R}^N} \left|\frac{\phi}{\beta_n}\right|^{q(x)} \diff \bar{\mu}_n   + 2^{q^+}  \int_{\mathbb{R}^N} \mathcal{H}\left(x,\left|\frac{ v_n\nabla \phi }{\beta_n}\right|\right) \diff x.
\end{align} 
Fixing $R>0$ such that $\spp (\phi) \subset B_R$, one has
\begin{align}\label{nab.phi}
	\int_{\mathbb{R}^N} \mathcal{H}\left(x,\left|\frac{ v_n\nabla \phi }{\beta_n}\right|\right) \diff x \leq \max\left\{\frac{1}{\beta_n^{p^-}},\frac{1}{\beta_n^{q^+}}\right\} \left(1+\|\,|\nabla \phi|\,\|_{L^\infty(\mathbb{R}^N)}^{q^+}\right) \int_{B_R} \mathcal{H}(x,|v_n|)\diff x.
\end{align}
Invoking  Theorem~\ref{T.mainE} (ii) and \eqref{T.conv.of.v_n-w}, we obtain 
\begin{align*}
	\lim_{ n \to \infty} \int_{B_R} \mathcal{H}(x,|v_n|)\diff x =0. 
\end{align*}
From this, \eqref{lim.beta_n} and \eqref{nab.phi} we deduce
\begin{equation}\label{PT3.1.lH}
	\lim_{n \to \infty} \int_{\mathbb{R}^N} \mathcal{H}\left(x,\left|\frac{ v_n\nabla \phi }{\beta_n}\right|\right) \diff x =0.
\end{equation}
Passing to the limit  in \eqref{betan.2} as $n \to \infty$ and using \eqref{T.w*-bar.mu}, \eqref{lim.beta_n} and \eqref{PT3.1.lH}, we obtain
\begin{align*}
	\frac{1}{2} \leq 2^{q^+}\widetilde{C}_L \int_{\mathbb{R}^N} \left|\frac{\phi}{\bar{\beta}}\right|^{q(x)} \diff \bar{\mu},
\end{align*}
and thus, 
\begin{equation*}
	\int_{\mathbb{R}^N} \left|\frac{\phi}{(2^{q^++1}\widetilde{C}_L)^{-\frac{1}{q^-}}\bar{\beta}}\right|^{q(x)} \diff \bar{\mu} \geq 1.
\end{equation*}
Hence, by Proposition~\ref{prop.norm-modular} we arrive at
\begin{equation} \label{R.side.2}
	0<\bar{\beta}=\lim_{n\to \infty} \|\phi v_n\| \leq (2^{q^++1}\widetilde{C}_L)^{\frac{1}{q^-}} \|\phi\|_{L_{\bar{\mu}}^{q(\cdot)}(\mathbb{R}^N)}.
\end{equation}
Invoking \eqref{S.ineq.1}, \eqref{lim.phivn}  and   \eqref{R.side.2} we deduce \eqref{CCP1.St1}; hence, \eqref{T.ccp.form.nu} follows in view of Lemma~\ref{L.reserveHolder}.

Next, we show that   $\{x_i\}_{i\in I }\subset \mathscr{C}$.
Suppose contrarily that  there exists $x_i\in \mathbb{R}^N\setminus\mathscr{C}$ for some $i\in I$. Thanks to the closedness of $\mathscr{C}$ we find $\delta>0$ such that $\overline{B_{2\delta}(x_i)}\subset \mathbb{R}^N\setminus\mathscr{C}$. Then by $(\calbf{C})$, we have that $r(x)<p^\ast(x),\ s(x)<q^\ast(x)$ for all  $x \in \overline{B_{\delta}(x_i)}$. Using Theorem~\ref{T.mainE} (ii) and \eqref{un_weak.conv} we obtain 
\begin{equation*}
	\lim\limits_{n \to \infty}\int_{B_{\delta}(x_i)} \mathcal{B}(x,|u_n|) \diff x =\int_{B_{\delta}(x_i)}\mathcal{B}(x,|u|) \diff x.
\end{equation*}
Using this and applying \cite[Proposition 1.203]{Fonseca}, we  have 
\begin{equation*}
	\nu\left(B_{\delta}(x_i)\right) \leq \liminf_{n \to \infty}\int_{B_{\delta}(x_i)}\mathcal{B}(x,|u_n|)\diff x 	= \int_{B_{\delta}(x_i)}\mathcal{B}(x,|u|) \diff x.
\end{equation*}
On the other hand, 	  taking into account \eqref{T.ccp.form.nu}  we infer
\begin{equation*}
	\nu\left(B_{\delta}(x_i)\right) \geq \int_{B_{\delta}(x_i)}\mathcal{B}(x,|u|)\diff x+\nu_i > \int_{B_{\delta}(x_i)}\mathcal{B}(x,|u|)\diff x, 
\end{equation*}
a contradiction. So $\{x_i\}_{i\in I}\subset \mathscr{C}$.

Next, we aim to prove \eqref{T.ccp.nu_mu} when $I\ne\emptyset$. Fix $i \in I$ and take $\delta>0$. Let $\phi_{i,\delta}$ be  as in Lemma \ref{L.un.grad.p}, and for a function $h \in  C_+(\mathbb{R}^N) $,  denote
$$h^+_\delta:= \sup_{x\in B_{\delta}(x_i)} h(x) \quad \text{and} \quad h^-_\delta:= \inf_{x\in B_{\delta}(x_i)} h(x). $$
Since $\phi_{i,\delta}u_n \in X_V$, by \eqref{def_S} we have
\begin{equation} \label{S_Phi_del}
	S \|\phi_{i,\delta} u_n\|_{\mathcal{B}} \leq \|\phi_{i,\delta} u_n\|.
\end{equation}
Applying Proposition~\ref{prop.nor-mod.D}  gives
\begin{align*}
	\|\phi_{i,\delta} u_n\|_{\mathcal{B}} & \geq \min \left\{ \left(\int_{B_\delta(x_i)} \mathcal{B}(x,|\phi_{i,\delta} u_n|)\diff x\right)^{\frac{1}{r^-_\delta}} , \left(\int_{B_\delta(x_i)} \mathcal{B}(x,|\phi_{i,\delta} u_n|)\diff x\right)^{\frac{1}{s^+_\delta}} \right\} \\
	& \geq \min \left\{ \left(\int_{B_{\delta/2}(x_i)} \mathcal{B}(x,| u_n|)\diff x\right)^{\frac{1}{r^-_\delta}} , \left(\int_{B_{\delta/2}(x_i)} \mathcal{B}(x,| u_n|)\diff x\right)^{\frac{1}{s^+_\delta}} \right\}.
\end{align*}
It follows from the last inequality, \eqref{nu_n-to-nu*} and \cite[Proposition 1.203]{Fonseca} that
\begin{equation*}
	\liminf_{n \to \infty} 	\|\phi_{i,\delta} u_n\|_{\mathcal{B}} \geq  \min \left\{\nu(B_{\delta/2}(x_i))^{\frac{1}{r^-_\delta}}, \nu(B_{\delta/2}(x_i))^{\frac{1}{s^+_\delta}} \right \}.
\end{equation*}
Thus, by the continuity of $r$ and $s$, we have
\begin{equation} \label{S.nu_i}
	\liminf_{\delta \to 0^+}\liminf_{n \to \infty} 	\|\phi_{i,\delta} u_n\|_{\mathcal{B}} \geq \min \left\{\nu_i^{\frac{1}{r(x_i)}}, \nu_i^{\frac{1}{s(x_i)}} \right \}.
\end{equation}
On the other hand, by using \eqref{modular-norm.XV} and the fact that $\operatorname{supp} (\phi_{i,\delta})\subset B_\delta(x_i)$ we obtain
\begin{equation}\label{S.mu_i.1}
	\|\phi_{i,\delta} u_n\| \leq \max\left\{(	I_{n,\delta})^{\frac{1}{p_{\delta}^-}}, 	(I_{n,\delta})^{\frac{1}{q_\delta^+}}\right\},
\end{equation}
with
\begin{equation*}
	I_{n,\delta}:=\int_{B_\delta(x_i)} \Big[\mathcal{H}\left(x,|\phi_{i,\delta} \nabla u_n + u_n  \nabla \phi_{i,\delta} |\right) +  V(x) \mathcal{H}(x,|\phi_{i,\delta} u_n|)\Big]\diff x.
\end{equation*}
By applying \eqref{Ineq2a}, for any given $\eps>0$  we find $C_\eps>1$ independent of $n$ and $\delta$ such  that
\begin{align}
	I_{n,\delta}
	\leq  (1+\eps) \int_{\RN} \phi_{i,\delta} \Big[\mathcal{H}(x,|\nabla u_n|) +  V(x)\mathcal{H}(x,|u_n|)  \Big] \diff x + C_\eps \int_{\RN} \mathcal{H}(x,|u_n \nabla \phi_{i,\delta}| ) \diff x.  \label{Jn.delta.1}
\end{align}
This and \eqref{mu_n to mu*} yield
\begin{align*}
	\limsup_{n \to \infty} 	I_{n,\delta}
	&\leq  (1+\eps) \int_{\RN} \phi_{i,\delta} \diff \mu + C_\eps \lim_{n\to\infty}\int_{\RN} \mathcal{H}(x,|u_n \nabla \phi_{i,\delta}| ) \diff x\\
	& \leq  (1+\eps)\mu\left(\overline{B_\delta(x_i)}\right)+ C_\eps \lim_{n\to\infty}\int_{\RN} \mathcal{H}(x,|u_n \nabla \phi_{i,\delta}| ) \diff x.
\end{align*}
Using this and Lemma \ref{L.un.grad.p}, we derive from \eqref{S.mu_i.1} that
\begin{align} \label{S.mu_i.2}
	\limsup_{\delta \to 0^+}\limsup_{n \to \infty} \|\phi_{i,\delta} u_n\| \leq (1+\eps) \max \left\{\mu_i^{\frac{1}{p(x_i)}}, \mu_i^{\frac{1}{q(x_i)}} \right \},
\end{align}
with $\mu_i=\mu(\{x_i\})$. 
Gathering \eqref{S_Phi_del}, \eqref{S.nu_i}, \eqref{S.mu_i.2} and the fact that $\{x_i\}_{i \in I} \subset \mathscr{C}$ we infer
\begin{equation*}
	S\min \left\{\nu_i^{\frac{1}{p^\ast(x_i)}}, \nu_i^{\frac{1}{q^\ast(x_i)}} \right \}  \leq (1+\eps) \max \left\{\mu_i^{\frac{1}{p(x_i)}}, \mu_i^{\frac{1}{q(x_i)}} \right\}.
\end{equation*}
From this we obtain \eqref{T.ccp.nu_mu} since $\eps>0$ was chosen arbitrarily. In particular, $\{x_i\}_{i \in I}$ are atoms of $\mu$.

We complete the proof by showing \eqref{T.ccp.form.mu}.
Clearly, for any $\phi\in C_0(\mathbb{R}^N)$  with $\phi\geq 0$, the functional $u\mapsto \int_{\mathbb{R}^N}\phi(x)\left[\mathcal{H}(x,|\nabla u|) + V(x) \mathcal{H}(x,| u|)\right]\diff x$ is convex and differentiable on $X_V$. Hence, it is weakly lower semicontinuous and therefore
\begin{multline*}
	\int_{\mathbb{R}^N}\phi(x)\Big[\mathcal{H}(x,|\nabla u|) + V(x) \mathcal{H}(x,| u|)\Big]\diff x\\
	 \leq \liminf_{n\to  \infty } \int_{\mathbb{R}^N }\phi(x)\Big[\mathcal{H}(x,|\nabla u_n|) + V(x) \mathcal{H}(x,|u_n|)\Big]\diff x =\int_{\mathbb{R}^N}\phi \diff \mu.
\end{multline*}
Thus, $\mu\geq \mathcal{H}(\cdot,|\nabla u|) +  V \mathcal{H}(\cdot,| u|) $. Since $\mu_*:=\sum_{i\in I} \mu_i \delta_{x_i}$ and $\mu_{**}:=\mathcal{H}(\cdot,|\nabla u|) +  V \mathcal{H}(\cdot,| u|)$ are orthogonal, \eqref{T.ccp.form.mu} follows. The proof is complete.
\end{proof}

%==========PROOF OF CCP AT INFINITY===========================

\begin{proof}[\rm \textbf{Proof of Theorem~\ref{Theo.CCP.inf}}]
Let $R>0$ and $n\in\N$. Let $\phi_R$ be as in Lemma \ref{L.un.grad.phiR}, and define
$$T_n(x):=\mathcal{H}(x,|\nabla u_n|) +  V(x) \mathcal{H}(x,| u_n|)\quad\text{for}\ x\in\RN.$$
Then, we decompose
\begin{align}
	\int_{ \mathbb{R}^N}T_n(x)\diff x =\int_{ \mathbb{R}^N}\phi_R^{p(x)}T_n(x)\diff x 
	+\int_{ \mathbb{R}^N}(1-\phi_R^{p(x)})T_n(x)\diff x. \label{T.ccp.inf.decompose1}
\end{align} 
By estimating 
\begin{align*}
	\int_{ B^c_{2R}} T_n(x)\diff x  \leq \int_{\mathbb{R}^N}\phi_R^{p(x)} T_n(x)\diff x
	\leq \int_{B^c_R}  T_n(x)\diff x,
\end{align*}
we obtain
\begin{equation}\label{T.ccp.inf.mu_inf}
	\lim_{R\to\infty}\limsup_{n \to \infty}\int_{\mathbb{R}^N}\phi_R^{p(x)} T_n(x)\diff x = \mu_\infty.
\end{equation}
On the other hand, \eqref{mu_n to mu*} and the fact that $1-\phi_R^{p(x)} \in C_c(\mathbb{R}^N)$ give
\begin{equation}\label{T.ccp.infinity.phiR}
	\lim_{n\to\infty}\int_{\mathbb{R}^N}\left(1-\phi_R^{p(x)}\right)T_n(x)\diff x=\int_{\mathbb{R}^N}\left(1-\phi_R^{p(x)}\right)\diff \mu.
\end{equation}
Note that $\lim\limits_{R\to\infty}\int_{\mathbb{R}^N}\phi_R^{p(x)}\diff\mu=0$ in view of the Lebesgue dominated convergence theorem. Thus,  \eqref{T.ccp.infinity.phiR} yields
\begin{equation} \label{T.ccp.mu(RN)}
	\lim_{R\to\infty}\lim_{n\to \infty }\int_{\mathbb{R}^N}\left(1-\phi_R^{p(x)}\right) T_n(x) \diff x=\mu(\mathbb{R}^N).
\end{equation}
Using \eqref{T.ccp.inf.mu_inf} and \eqref{T.ccp.mu(RN)}, we obtain \eqref{T.ccp.infinity.mu} by taking limit superior as $n\to\infty$ and then taking limit as $R\to\infty$ in \eqref{T.ccp.inf.decompose1}.

In the same manner, we decompose
\begin{equation}\label{T.ccp.inf.decompose2}
	\int_{ \mathbb{R}^N}\mathcal{B}(x,|u_n|)\diff x=\int_{ \mathbb{R}^N}\phi_R^{s(x)} \mathcal{B}(x,|u_n|)\diff x  + \int_{ \mathbb{R}^N}\left(1-\phi_R^{s(x)}\right)\mathcal{B}(x,|u_n|)\diff x.
\end{equation}
Similar arguments to those leading to \eqref{T.ccp.inf.mu_inf} and \eqref{T.ccp.mu(RN)} give
\begin{equation}\label{T.ccp.inf.nu_inf}
	\lim_{R\to\infty}\limsup_{n \to \infty}\int_{\mathbb{R}^N}\phi_R^{s(x)} \mathcal{B}(x,|u_n|)\diff x = \nu_\infty
\end{equation}
and 
\begin{equation}\label{T.ccp.inf.nu(RN)}
	\lim_{R\to\infty}\lim_{n\to \infty }\int_{\mathbb{R}^N}\left(1-\phi_R^{s(x)}\right) \mathcal{B}(x,|u_n|)\diff x=\nu(\mathbb{R}^N).
\end{equation}
Making use of \eqref{T.ccp.inf.decompose2}--\eqref{T.ccp.inf.nu(RN)}, we obtain \eqref{T.ccp.infinity.nu}.

Finally, we claim \eqref{T.ccp.inf.nu_mu} when $(\calbf{E}_\infty)$ is additionally assumed. Let $\epsilon\in(0,p_\infty)$ be arbitrary and fixed. Then by $(\calbf{E}_\infty)$, we find $R_0>0$ such that
\begin{equation}\label{T.ccp.inf.h}
	|h(x)-h_\infty|<\epsilon, \quad \forall x \in B^c_{R_0}
\end{equation}
for each  $h \in \{r,s,p,q\}$. Obviously, $\phi_Ru_n \in X_V$, then by \eqref{def_S} we have
\begin{align} \label{T.ccp.inf.S.phiR}
	S\|\phi_R u_n\|_{\mathcal{B}}& \leq \|\phi_R u_n\|.
\end{align}
For $R>R_0$, using  \eqref{T.ccp.inf.h} and Proposition~\ref{prop.nor-mod.D} we have
\begin{align*}
	\|\phi_Ru_n\|_{\mathcal{B}} 
	&\geq \min\left\{\left(\int_{B_R^c}\phi_R^{s(x)}\mathcal{B}(x,|u_n|) \diff x\right)^{\frac{1}{r_\infty-\epsilon}}, \left(\int_{B_R^c}\phi_R^{s(x)} \mathcal{B}(x,|u_n|) \diff x\right)^{\frac{1}{s_\infty+\epsilon}}\right\}.
\end{align*}
From this and  \eqref{T.ccp.inf.nu_inf}, we obtain
\begin{equation} \label{nu.inf1}
	\liminf_{R\to\infty}\limsup_{n\to \infty} \|\phi_R u_n\|_{\mathcal{B}} \geq \min\biggl\{\nu_\infty^{\frac{1}{r_\infty-\epsilon}},\,\nu_\infty^{\frac{1}{s_\infty+\epsilon}} \biggr\}.
\end{equation}
On the other hand, for $R>R_0$, from \eqref{m-n}  it holds that
\begin{align} \label{T.ccp.inf.phiR_un.1}
	\|\phi_R u_n\|
	\leq & \max\bigg\{\left( I_{n,R}\right)^{\frac{1}{p_\infty-\epsilon}},\left( I_{n,R}\right)^{\frac{1}{q_\infty+\epsilon}}\bigg\},
\end{align}
with $I_{n,R}:=\displaystyle \int_{B_R^c} \Big[ \mathcal{H}(x,|\nabla(\phi_Ru_n)|) +  V(x)\mathcal{H}(x,|\phi_Ru_n|) \Big] \diff x$.
Using \eqref{Ineq2a} again, we find $C_\epsilon>1$ independent of $n,R$ such that
\begin{align*}
	\notag
	I_{n,R}&=  \int_{B_R^c} \Big[\mathcal{H}(x,| u_n \nabla\phi_R +  \phi_R \nabla u_n| ) +   V(x)\mathcal{H}(x,|\phi_Ru_n|)\Big] \diff x   \notag \\
	&\leq (1+\epsilon) \int_{B_R^c} \phi^{p(x)}_R \Big[ \mathcal{H}(x,|\nabla u_n|)+ V(x) \mathcal{H}(x,|u_n|)\Big] \diff x + C_\epsilon \int_{B_R^c}  \mathcal{H}(x,|u_n \nabla \phi_R|)\diff x,
\end{align*}
i.e.,\begin{align}\label{JnR}
		I_{n,R}\leq(1+\epsilon) \int_{B_R^c} \phi^{p(x)}_R T_n(x) \diff x + C_\epsilon \int_{B_R^c}  \mathcal{H}(x,|u_n \nabla \phi_R|)\diff x.
\end{align}
Using \eqref{T.ccp.inf.mu_inf} and Lemma~\ref{L.un.grad.phiR}, we derive from \eqref{JnR} that
\begin{equation*}
	\limsup_{R \to \infty}\limsup_{n \to \infty}I_{n,R} \leq (1+\epsilon)\mu_\infty.
\end{equation*}
By this and \eqref{T.ccp.inf.phiR_un.1}, we obtain
\begin{equation} \label{mu.inf1}
	\limsup_{R \rightarrow \infty}\limsup_{n \to \infty} \|\phi_R u_n\| \leq (1+\epsilon)^{\frac{1}{p_\infty-\epsilon}}  \max \left\{\mu_\infty^{\frac{1}{p_\infty-\epsilon}},\mu_\infty^{\frac{1}{q_\infty+\epsilon}} \right \}.
\end{equation}
From \eqref{T.ccp.inf.S.phiR}, \eqref{nu.inf1} and \eqref{mu.inf1} we derive
\begin{equation*}
	S\min\biggl\{\nu_\infty^{\frac{1}{s_\infty+\epsilon}} , \nu_\infty^{\frac{1}{r_\infty-\epsilon}} \biggr\} \leq  (1+\epsilon)^{\frac{1}{p_\infty-\epsilon}} \max \left\{\mu_\infty^{\frac{1}{p_\infty-\epsilon}},\mu_\infty^{\frac{1}{q_\infty+\epsilon}} \right \}.
\end{equation*}
Hence, \eqref{T.ccp.inf.nu_mu} follows since $\epsilon \in (0,p_\infty)$ was taken  arbitrarily. The proof is complete.
\end{proof}
We conclude this section by proving Theorem~\ref{Theo.CCP2}.
\begin{proof}[\rm \textbf{Proof of Theorem~\ref{Theo.CCP2}}] We define $S$ as in \eqref{def_S} with $X_V$ replaced by $W_V^{1,\mathcal{H}}(\mathbb{R}^N)$. Then, by Remark~\ref{Rmk.WH}, it holds $S\in (0,\infty)$. It is straightforward to verify that the proofs of Theorems~\ref{Theo.CCP1} and \ref{Theo.CCP.inf} remain valid when $X_V$ is replaced by $W_V^{1,\mathcal{H}}(\mathbb{R}^N)$.
\end{proof}
%==============================EXISTENCE AND CONCENTRATION ==============

%----------PROBLEM SETTING AND ASSUMPTIONS----------------------------

\section{The existence and concentration of solutions}\label{existence}

In this section, we will establish the existence and concentration of solutions to problem \eqref{eq1} using variational methods. The concentration-compactness principles, obtained in Section~\ref{ccp}, play a decisive role in our approach.

\subsection{Statements of main results}${}$

\vskip5pt

Let the assumptions $(\calbf{O})$, $(\calbf{C})$ and  $(\calbf{E}_\infty)$ hold. Consider the following problem \begin{eqnarray}\label{eq1'}
-\operatorname{div}\mathcal{A}(x,\nabla u)+ \lambda V(x) A(x,u)= f(x,u)+\theta B(x,u) \quad \text{in}~ \RN,
\end{eqnarray}
where $\mathcal{A}$, $A$ are given by \eqref{A}; $f$, $B$ are defined as
\begin{equation}\label{def.f}
f(x,t):=b_1(x)|t|^{\ell_1(x)-2}t+b_2(x)a(x)^{\frac{\ell_2(x)}{q(x)}}|t|^{\ell_2(x)-2}t,
\end{equation}
\begin{equation}\label{def.B}
B(x,t):=c_1(x)|t|^{r(x)-2}t+c_2(x)a(x)^{\frac{s(x)}{q(x)}}|t|^{s(x)-2}t;
\end{equation}
and $\lambda$, $\theta$ are positive parameters. We assume further  on the nonlinear term $f$ that
\begin{enumerate}
\item[$(\calbf{S})$]  
$0<b_1(\cdot) \in L^{\frac{p^\ast(\cdot)}{p^\ast(\cdot)-\ell_1(\cdot)}}(\mathbb{R}^N)$ and  $0\leq b_2(\cdot) \in L^{\frac{q^\ast(\cdot)}{q^\ast(\cdot)-\ell_2(\cdot)}}(\mathbb{R}^N)$, where $\ell_1, \ell_2 \in C_+(\mathbb{R}^N)$ such that $\ell_1(\cdot) < \ell_2(\cdot)$,  $\ell_1(\cdot) \ll p^\ast(\cdot)$ and $\ell_2(\cdot) \ll q^\ast(\cdot)$.
\end{enumerate}
For each $\lambda>0$, we denote by $\left(X_\lambda,\|\cdot\|_\lambda \right)$ the space obtained by replacing $V$ with $\lambda V$ in $\left(X_V,\|\cdot\|\right)$. By a weak solution of problem  \eqref{eq1'} we mean a function $u\in X_\lambda$ such that
\begin{align*}
\int_{\mathbb{R}^N}\mathcal{A}(x,\nabla u)\cdot \nabla v\, \diff x  & + \int_{\mathbb{R}^N} \lambda V(x)A(x,u)v\, \diff x\\
& =
\int_{\mathbb{R}^N}f(x,u)v \, \diff x + \theta \int_{\mathbb{R}^N} B(x,u)v \, \diff x, \quad
\forall v \in X_\lambda.
\end{align*}
Note that this definition is well defined in views of Theorems \ref{T.mainE} and \ref{Lem.com.sub}. The existence of weak solutions to problem~\eqref{eq1'} with $\lambda$ fixed is as follows.

\begin{theorem} \label{Theo.Superlinear}
Assume that $(\calbf{O})$, $(\calbf{C})$,  $(\calbf{E}_\infty)$  and $(\calbf{S})$ are  satisfied with $q^+<\min\{\ell_1^-,r^-\}$. Then, for $\lambda>0$  given,  there exists  $\theta_0>0$ such that for each $\theta \in (0,\theta_0)$, problem \eqref{eq1'} has a nontrivial weak solution. 
\end{theorem}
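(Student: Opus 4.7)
The plan is to apply the Mountain Pass Theorem to the $C^1$-functional $J_\lambda : X_\lambda \to \R$ defined by
\begin{align*}
J_\lambda(u) := \int_{\RN}\Big[\tfrac{|\nabla u|^{p(x)}}{p(x)} + a(x)\tfrac{|\nabla u|^{q(x)}}{q(x)}\Big] \diff x + \lambda\int_{\RN} V(x)\Big[\tfrac{|u|^{p(x)}}{p(x)} + a(x)\tfrac{|u|^{q(x)}}{q(x)}\Big] \diff x - \Phi_f(u) - \theta\, \Phi_B(u),
\end{align*}
where $\Phi_f$ and $\Phi_B$ are the integrals of the primitives of $f$ and $B$; its critical points are exactly weak solutions of \eqref{eq1'}. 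For the \emph{well near zero}, I would use Proposition~\ref{Prop_m-n_W(1,H)} (giving $\rho_{\lambda V}(u) \geq \|u\|_\lambda^{q^+}$ when $\|u\|_\lambda \leq 1$) together with the continuous embeddings of Theorem~\ref{Lem.com.sub} for $\Phi_f$ and of Theorem~\ref{T.mainE}(i) for $\Phi_B$, which control these terms by $\|u\|_\lambda^{\ell_1^-} + \|u\|_\lambda^{\ell_2^+}$ and $\|u\|_\lambda^{r^-} + \|u\|_\lambda^{s^+}$; since $q^+ < \min\{\ell_1^-, r^-\}$, the reaction is of higher order near $0$, yielding $J_\lambda(u) \geq \alpha > 0$ on a small sphere. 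For the \emph{far point}, fix any $\varphi \in C_c^\infty(\RN)\setminus\{0\}$: $F(x, t\varphi) \geq C t^{\ell_1^-}$ on a positive-measure subset for $t$ large while $\rho_{\lambda V}(t\varphi) \lesssim t^{q^+}$, so $J_\lambda(t\varphi) \to -\infty$, producing the required $e$.

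For a Palais--Smale sequence $(u_n)$ at level $c_\theta$, boundedness in $X_\lambda$ follows from an Ambrosetti--Rabinowitz-type argument that is \emph{built into} the structure of $f$ and $B$: the explicit form \eqref{def.f}--\eqref{def.B} directly gives $f(x,t)t \geq \ell_1^- F(x,t)$ and $B(x,t)t \geq r^- \tilde B(x,t)$. Choosing $\mu \in (q^+, \min\{\ell_1^-, r^-\})$ and computing $J_\lambda(u_n) - \mu^{-1}\langle J'_\lambda(u_n), u_n\rangle$, all three of the resulting coefficients $\tfrac{1}{q^+}-\tfrac{1}{\mu},\ \tfrac{\ell_1^-}{\mu}-1,\ \tfrac{r^-}{\mu}-1$ are strictly positive, so $\rho_{\lambda V}(u_n) \leq C(1+\|u_n\|_\lambda)$ and boundedness follows via \eqref{m-n}. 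Moreover, taking $\varphi$ fixed and independent of $\theta$, the minimax level satisfies $c_\theta \leq c_0 := \max_{t \geq 0} J_{\lambda}(te)\big|_{\theta=0} \in (0,\infty)$, uniformly in $\theta$.

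The \textbf{main obstacle} is ruling out concentration. Passing to a subsequence, $u_n \rightharpoonup u$ in $X_\lambda$ and the weak-$\ast$ limits \eqref{mu_n to mu*}--\eqref{nu_n-to-nu*} exist, so Theorems~\ref{Theo.CCP1}--\ref{Theo.CCP.inf} furnish atoms $\{x_i\}_{i\in I}\subset \mathscr C$ and a possible infinity contribution $(\mu_\infty,\nu_\infty)$. Testing $\langle J'_\lambda(u_n), \phi_{i,\delta} u_n\rangle \to 0$ with the cutoff of Lemma~\ref{L.un.grad.p}, applying \eqref{young} to the cross term $|u_n \mathcal{A}(x,\nabla u_n)\cdot\nabla\phi_{i,\delta}|$ and invoking Lemma~\ref{L.un.grad.p} and the compact embedding of Theorem~\ref{Lem.com.sub} (which kills the $f$-contribution since $\ell_1,\ell_2\ll p^\ast,q^\ast$), the limit $n\to\infty$ followed by $\delta\to 0^+$ yields $\mu_i \leq \theta\,\nu_i$; the analogous argument with $\phi_R$ from Lemma~\ref{L.un.grad.phiR} gives $\mu_\infty \leq \theta\,\nu_\infty$. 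Combining these with \eqref{T.ccp.nu_mu} and \eqref{T.ccp.inf.nu_mu} shows that whenever $\nu_i>0$ (resp.\ $\nu_\infty>0$), necessarily $\nu_i \geq \kappa\,\theta^{-\gamma_i}$ (resp.\ $\nu_\infty \geq \kappa\,\theta^{-\gamma_\infty}$) for explicit positive exponents $\gamma_i,\gamma_\infty$ depending on $p,q,p^\ast,q^\ast$ at the point. Returning to $c_\theta = \lim \big[J_\lambda(u_n) - \mu^{-1}\langle J'_\lambda(u_n),u_n\rangle\big]$ and retaining only $\theta(\tfrac{r^-}{\mu}-1)\Phi_B(u_n)$, one obtains $c_\theta \gtrsim \theta\bigl(\sum_i \nu_i + \nu_\infty\bigr) \gtrsim \theta^{1-\gamma}$ with $\gamma > 1$; this contradicts $c_\theta \leq c_0$ once $\theta < \theta_0$ for a suitable $\theta_0 > 0$, forcing $\nu_i = 0$ for all $i\in I$ and $\nu_\infty = 0$.

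With no atoms and no loss at infinity, \eqref{T.ccp.form.nu} and \eqref{T.ccp.infinity.nu} give $\int_{\RN}\mathcal{B}(x,|u_n|)\diff x \to \int_{\RN}\mathcal{B}(x,|u|)\diff x$; combined with $u_n \to u$ a.e.\ this upgrades the weak convergence to $u_n \to u$ in $L^{\mathcal B}(\RN)$ (via a Brezis--Lieb/Vitali argument), so $\int B(x,u_n)(u_n-u)\diff x \to 0$. The subcritical term is handled by the compactness in Theorem~\ref{Lem.com.sub}. Plugging these into $\langle J'_\lambda(u_n), u_n - u\rangle \to 0$ produces the hypothesis of Lemma~\ref{Lem.S+} and therefore $u_n \to u$ in $X_\lambda$. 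The Mountain Pass Theorem then yields a nontrivial critical point of $J_\lambda$, completing the proof.
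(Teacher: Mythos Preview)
Your proposal is correct and follows essentially the same route as the paper: Mountain Pass geometry (the paper isolates this as Lemma~\ref{Lem.J(u).geo}), an Ambrosetti--Rabinowitz-type boundedness estimate, and then a concentration-compactness analysis (packaged in the paper as Lemma~\ref{Lem.PS1.c}) showing that atoms or loss at infinity would force the minimax level above a threshold $D_\theta\to\infty$ as $\theta\to 0^+$, contradicting the $\theta$-independent upper bound coming from the fixed mountain-pass path. The only cosmetic differences are that the paper takes the test parameter equal to $\alpha=\min\{\ell_1^-,r^-\}$ rather than a strict intermediate $\mu\in(q^+,\alpha)$, and that your ``retained term'' should more precisely read $\theta\bigl(\tfrac{1}{\mu}-\tfrac{1}{r^-}\bigr)\int_{\RN}\mathcal{B}(x,|u_n|)\diff x$ rather than $\theta(\tfrac{r^-}{\mu}-1)\Phi_B(u_n)$; this does not affect the argument.
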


%--------------------EXISTENCE AND CONCENTRATION STATEMENTS-----------------

As mentioned earlier, we are interested in the concentration of weak solutions at the bottom of the potential  $V$ when $\lambda\to\infty$. For this purpose, we assume further that	
\begin{itemize}
\item [$(\calbf{V}_2)$]  $V\in C(\RN)$, $\Omega_V:=\textup{int} \left(V^{-1}(0) \right)$ is a nonempty bounded smooth domain, and $\overline{\Omega}_V= V^{-1}(0)$.
\end{itemize}
The limit problem associated with problem \eqref{eq1'} is the following:
\begin{equation}\label{eq2} 
\begin{cases}
	-\operatorname{div} \mathcal{A}(x,\nabla u) = f(x,u)+\theta B(x,u) &   \text{ in } \Omega_V,\\
	u = 0 &  \text{ on } \partial \Omega_V.
\end{cases}
\end{equation}
By a weak solution of problem \eqref{eq2} we mean a function $u_* \in W^{1,\mathcal{H}}(\Omega_V)\cap W_0^{1,1}(\Omega_V)$ such that
\begin{align*}
\int_{\Omega_V} \mathcal{A}(x,\nabla u_*)\cdot \nabla v \, \diff x 	= \int_{\Omega_V} f(x,u_*)v \,\diff x + \theta  \int_{\Omega_V} B(x,u_*)v \, \diff x, \quad \forall v \in C_c^{\infty}(\Omega_V).
\end{align*}
Our last main result on the concentration of weak solutions is stated as follows.
\begin{theorem} \label{Theo.Superlinear2}
In addition to the hypotheses of Theorem~\ref{Theo.Superlinear}, assume that $(\calbf{V}_2)$ holds. Then, there exists  $\theta_0>0$ such that for each given $\theta \in (0,\theta_0)$ and any $\lambda>0$, problem \eqref{eq1'} has a nontrivial weak solution $u_\lambda$. Furthermore, if $\lambda_n\to\infty$, then $u_{\lambda_n} \to u_0$ along a subsequence in $X_V$, with $u_0=0$ a.e. in $\Omega_V^c$ and $u_0\big|_{\Omega_V}$ being a nontrivial weak solution of the limit problem \eqref{eq2}.
\end{theorem}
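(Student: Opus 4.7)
The plan is to split the argument into an existence step (for each fixed $\lambda>0$) and a concentration step (as $\lambda_n\to\infty$). The existence for each $\lambda$ is Theorem~\ref{Theo.Superlinear}; one only has to observe that the $\theta_0$ there can be chosen uniformly in $\lambda\ge 1$. The key starting observation is that, fixing any $\varphi\in C_c^\infty(\Omega_V)\setminus\{0\}$, the value $J_\lambda(t\varphi)$ is independent of $\lambda$ because $V\equiv 0$ on $\spp(\varphi)\subset\overline{\Omega_V}$ by $(\calbf{V}_2)$; together with $q^+<\min\{\ell_1^-,r^-\}$ this produces a mountain--pass path admissible for every $\lambda>0$ and yields a uniform upper bound $c_{\lambda}\le c_*$ for the mountain--pass level. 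An Ambrosetti--Rabinowitz type inequality, derived from $q^+<\min\{\ell_1^-,r^-\}$ by comparing $J_\lambda(u)-\frac{1}{\gamma}\langle J_\lambda'(u),u\rangle$ with $\gamma=\min\{\ell_1^-,r^-\}$, then gives $\rho_{\lambda V}(u_\lambda)\le M$ with $M$ independent of $\lambda$. Since $\lambda_n\ge 1$ eventually, this forces $\rho_V(u_{\lambda_n})\le M$, so $\{u_{\lambda_n}\}$ is bounded in $X_V$.

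Passing to a subsequence, $u_{\lambda_n}\rightharpoonup u_0$ in $X_V$. From $\int_{\RN}V(x)\mathcal{H}(x,|u_{\lambda_n}|)\diff x\le M/\lambda_n\to 0$ and Fatou's lemma we obtain $\int_{\RN}V\,\mathcal{H}(\cdot,|u_0|)\diff x=0$, so $u_0=0$ a.e. on $\{V>0\}$; assumption $(\calbf{V}_2)$ then implies that $u_0$ is supported in $\overline{\Omega_V}$, and the trace theory for the smooth domain $\Omega_V$ gives $u_0|_{\Omega_V}\in W^{1,\mathcal{H}}(\Omega_V)\cap W_0^{1,1}(\Omega_V)$.

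To upgrade to strong convergence, apply Theorems~\ref{Theo.CCP1} and~\ref{Theo.CCP.inf} to $\{u_{\lambda_n}\}$, producing atoms $\{x_i,\nu_i,\mu_i\}_{i\in I}$ together with the mass--at--infinity pair $(\mu_\infty,\nu_\infty)$. For each atom, test the equation against the cutoff $\phi_{i,\delta}u_{\lambda_n}$ from Lemma~\ref{L.un.grad.p} and send $n\to\infty$, then $\delta\to 0^+$; the subcritical $f$--term vanishes by Theorem~\ref{Lem.com.sub}, while the remaining balance combined with \eqref{T.ccp.nu_mu} forces $\nu_i\ge c_0\theta^{-\alpha}$ for constants $c_0>0$, $\alpha>0$ depending only on the data and $S$. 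The Ambrosetti--Rabinowitz argument applied at level $c_{\lambda_n}\le c_*$ bounds $\theta\nu_i$ from above by a multiple of $c_*$; for $\theta_0$ sufficiently small the two are incompatible, so $\nu_i=0$ for every $i$. The same scheme with $\phi_R u_{\lambda_n}$ from Lemma~\ref{L.un.grad.phiR} and \eqref{T.ccp.inf.nu_mu} yields $\nu_\infty=0$. Hence $\int\mathcal{B}(\cdot,|u_{\lambda_n}|)\diff x\to\int\mathcal{B}(\cdot,|u_0|)\diff x$, and testing $\langle J_{\lambda_n}'(u_{\lambda_n})-J_{\lambda_n}'(u_0),u_{\lambda_n}-u_0\rangle=o(1)$ (using $Vu_0\equiv 0$ to neutralize the $\lambda_n$--dependent piece involving $u_0$) leads to
$$\limsup_{n\to\infty}\int_{\RN}\bigl[\mathcal{A}(x,\nabla u_0)\cdot\nabla(u_{\lambda_n}-u_0)+\lambda_n V(x)A(x,u_0)(u_{\lambda_n}-u_0)\bigr]\diff x\le 0,$$
whereupon the $(S)_+$ property (Lemma~\ref{Lem.S+}) yields $u_{\lambda_n}\to u_0$ in $X_V$.

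Finally, for any $\varphi\in C_c^\infty(\Omega_V)$ the $\lambda_n V$--term drops out of the weak formulation (since $V\varphi\equiv 0$), and passing to the limit using the strong convergence produces the weak formulation of \eqref{eq2}. Nontriviality of $u_0$ follows because $J_*(u_0)=\lim_n J_{\lambda_n}(u_{\lambda_n})\ge\alpha_*>0$, with $\alpha_*$ the mountain--pass lower bound inherited from the uniform linking geometry. The main obstacle in executing the plan is the simultaneous control of the unbounded weight $\lambda_n V$ and the critical term $\theta B$ inside the CCP analysis: one must extract a single $\theta_0$ that works for all large $\lambda$ while keeping both the mountain--pass level positive and every potential atom suppressed, and then convert the resulting measure--convergence into $X_V$--strong convergence through the $(S)_+$ machinery adapted to the $\lambda$--dependent modular.
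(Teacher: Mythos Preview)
Your overall strategy coincides with the paper's: uniform mountain--pass level via a test function supported in $\Omega_V$, uniform boundedness from the AR--type inequality, Fatou to get $u_0\in X_0$, CCP in $X_V$ to kill atoms and mass at infinity, then $(S)_+$ to upgrade to strong convergence. Two points need repair, though.

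First, the displayed $(S)_+$ hypothesis is written with $\mathcal{A}(x,\nabla u_0)$ and $A(x,u_0)$ in the operator. That inequality is trivially true (the first term vanishes by weak convergence, the second because $Vu_0\equiv 0$) and is \emph{not} what forces strong convergence. The correct condition is
\[
\limsup_{n\to\infty}\int_{\RN}\Bigl[\mathcal{A}(x,\nabla u_{\lambda_n})\cdot\nabla(u_{\lambda_n}-u_0)+V(x)A(x,u_{\lambda_n})(u_{\lambda_n}-u_0)\Bigr]\diff x\le 0,
\]
which the paper obtains by expanding $\langle J_{\lambda_n}'(u_{\lambda_n}),u_{\lambda_n}-u_0\rangle=0$, using monotonicity to replace $\lambda_n V$ by $V$ in the nonnegative potential term, and killing the $f$-- and $B$--pieces via the already established $L^F$ and $L^{\mathcal{B}}$ strong convergence. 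Your computation $\langle J_{\lambda_n}'(u_{\lambda_n})-J_{\lambda_n}'(u_0),u_{\lambda_n}-u_0\rangle=o(1)$ does lead there, but the conclusion must be stated with $u_{\lambda_n}$, not $u_0$, inside the operator.

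Second, your nontriviality argument asserts $J_*(u_0)=\lim_n J_{\lambda_n}(u_{\lambda_n})$. This equality requires $\int_{\RN}\lambda_n V\widehat{A}(x,|u_{\lambda_n}|)\,\diff x\to 0$, which does \emph{not} follow from $u_{\lambda_n}\to u_0$ in $X_V$ (that only gives $\int V\mathcal{H}(x,|u_{\lambda_n}|)\to 0$, with no control on the factor $\lambda_n$). One can recover this by testing $J_{\lambda_n}'(u_{\lambda_n})=0$ against $u_0$ (the $\lambda_n V$--term drops since $Vu_0\equiv 0$) and against $u_{\lambda_n}$, then subtracting the limits; but you do not do this. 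The paper avoids the issue entirely: from $\langle J_{\lambda_n}'(u_{\lambda_n}),u_{\lambda_n}\rangle=0$ and $u_{\lambda_n}\ne 0$ one extracts a uniform lower bound $\|u_{\lambda_n}\|_{\lambda_n}\ge c>0$, hence $\int F(x,|u_{\lambda_n}|)+\theta\int\mathcal{B}(x,|u_{\lambda_n}|)\ge c^{q^+}$, and passes this directly to the limit using the $L^F$ and $L^{\mathcal{B}}$ strong convergence to conclude $u_0\ne 0$.
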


%-----------EXISTENCE PROOF----------------------------------- 

\subsection{Proofs of Theorems \ref{Theo.Superlinear} and \ref{Theo.Superlinear2}}${}$

\vskip5pt

%==========Energy functionals=======================
Let the assumptions of Theorem \ref{Theo.Superlinear} hold, and let $\lambda,\theta>0$.  Define  $\widehat{A},F:\RN \times [0,\infty) \to \R$ and $\widehat{F}, \widehat{B}:\RN \times \R \to \R$ as
\begin{eqnarray*}
\widehat{A}(x,t):=\frac{t^{p(x)}}{p(x)}+a(x)\frac{t^{q(x)}}{q(x)},
\end{eqnarray*}
\begin{equation*}\label{def.F}
F(x,t):= b_1(x)t^{\ell_1(x)} + b_2(x)a(x)^{\frac{\ell_2(x)}{q(x)}} t^{\ell_2(x)}, 
\end{equation*}
\begin{equation*}
\widehat{F}(x,t): = \int_0^t f(x,\tau) \diff \tau = b_1(x) \frac{|t|^{\ell_1(x)}}{\ell_1(x)} + b_2(x)a(x)^{\frac{\ell_2(x)}{q(x)}} \frac{|t|^{\ell_2(x)}}{\ell_2(x)}
\end{equation*}
and 
\begin{equation*}
\widehat{B}(x,t): = \int_0^t B(x,\tau) \diff \tau = c_1(x)\frac{|t|^{r(x)}}{r(x)} + c_2(x) a(x)^{\frac{s(x)}{q(x)}}\frac{|t|^{s(x)}}{s(x)}.
\end{equation*}
Note that by Theorem~\ref{Lem.com.sub}, we have
\begin{equation}\label{XV.comp.emd.LF}
X_\lambda \hookrightarrow\hookrightarrow L^{F}(\mathbb{R}^N).
\end{equation}
In order to determine weak solutions to problem~\eqref{eq1'}, we define $J:X_\lambda \to \mathbb{R}$ as
\begin{align*}
J (u)&:=\int_{\mathbb{R}^N}\widehat{A}(x,|\nabla u|)\diff x +  \int_{\mathbb{R}^N}\lambda V(x)\widehat{A}(x,|u|) \diff x - \int_{ \mathbb{R}^N} 	\widehat{F}(x,u) \diff x	-\theta \int_{\mathbb{R}^N}\widehat{B}(x,u)\diff x,\quad u\in X_\lambda.
\end{align*}
By applying a standard argument utilizing Theorem~\ref{T.mainE} and the embedding \eqref{XV.comp.emd.LF}, we can readily demonstrate that $J$ is of class $C^1$, and its Fr\'echet derivative $J':X_\lambda \to( X_\lambda)^\ast$ is given by
\begin{multline}\label{Diff1}
\left\langle J' (u),v\right\rangle=  \int_{\mathbb{R}^N} \mathcal{A}(x,\nabla u)\cdot\nabla v\,\diff x +  \int_{\mathbb{R}^N} \lambda V(x) A(x,u)v \diff x \\ 
-\int_{\mathbb{R}^N}f(x,u)v\,\diff x- \theta \int_{\mathbb{R}^N} B(x,u)v\,\diff x,\quad \forall\, u,v\in X_\lambda.
\end{multline}
It is clear that a critical point of $J$ is a weak solution of problem \eqref{eq1'}. The following result is useful for finding critical points of $J$.

\begin{lemma}\label{Lem.PS1.c}
For any $\theta>0$ and $\lambda > 0$, the functional $J $ satisfies the  $\textup{(PS)}_c$ condition with $c\in \mathbb{R}$ satisfying
\begin{equation}\label{PS1.c.cond}
	c<\round{\frac{1}{q^+}-\frac{1}{r^-}}\min\left\{S^{\tau_1},S^{\tau_2}\right\}\min \left\{\theta^{-\sigma_1},\theta^{-\sigma_2}\right\},
\end{equation}	
where $S$ is given in \eqref{def_S}, $\tau_1:=\left(\frac{ps}{s-p}\right)^-$, $\tau_2:=\left(\frac{qr}{r-q}\right)^+$, $\sigma_1:=\left(\frac{p}{s-p}\right)^-$, and $\sigma_2:=\left(\frac{q}{r-q}\right)^+$.
\end{lemma}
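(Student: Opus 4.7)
Let $\{u_n\}\subset X_\lambda$ be a Palais--Smale sequence at level $c$.

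\emph{Boundedness, CCP, and localization of atoms.} Choose $m \in (q^+, \min\{\ell_1^-, r^-\}]$; the coefficient in \eqref{PS1.c.cond} arises from the choice $m = r^-$. Since $\widehat A(x,t) \geq \tfrac{1}{q^+}\mathcal H(x,t)$ and both $\tfrac{1}{m}f(x,t)t - \widehat F(x,t)$ and $\tfrac{1}{m}B(x,t)t - \widehat B(x,t)$ are pointwise nonnegative, one has
\[
J(u_n) - \tfrac{1}{m}\langle J'(u_n),u_n\rangle \ \geq\ \bigl(\tfrac{1}{q^+} - \tfrac{1}{m}\bigr)\int_{\R^N}\bigl[\mathcal H(x,|\nabla u_n|) + \lambda V\mathcal H(x,|u_n|)\bigr]\,\diff x.
\]
Combined with $J(u_n)\to c$, $\|J'(u_n)\|_{X_\lambda^\ast}\to 0$, and the modular--norm relation analogous to \eqref{m-n}, this yields boundedness of $\{u_n\}$ in $X_\lambda$. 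Passing to a subsequence, $u_n \rightharpoonup u$ in $X_\lambda$, $\mathcal H(\cdot,|\nabla u_n|) + \lambda V \mathcal H(\cdot,|u_n|) \overset{\ast}{\rightharpoonup} \mu$ and $\mathcal B(\cdot,|u_n|) \overset{\ast}{\rightharpoonup} \nu$ in $\mathcal R(\R^N)$; Theorems~\ref{Theo.CCP1}--\ref{Theo.CCP.inf} then furnish atoms $\{(x_i,\mu_i,\nu_i)\}_{i\in I}\subset\mathscr C\times(0,\infty)^2$ and masses at infinity $\mu_\infty,\nu_\infty$ obeying \eqref{T.ccp.form.nu}--\eqref{T.ccp.inf.nu_mu}. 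Testing the vanishing $\langle J'(u_n),\phi_{i,\delta}u_n\rangle\to 0$ with $\phi_{i,\delta}$ from Lemma~\ref{L.un.grad.p} and sending $n\to\infty$ then $\delta\to 0^+$, the drift $\int u_n\mathcal A(x,\nabla u_n)\cdot\nabla\phi_{i,\delta}\,\diff x$ vanishes via Young's inequality and Lemma~\ref{L.un.grad.p}, the subcritical term $\int\phi_{i,\delta}f(x,u_n)u_n\,\diff x$ vanishes by the compact embedding $X_\lambda\hookrightarrow\hookrightarrow L^F(\R^N)$ of Theorem~\ref{Lem.com.sub}, and the critical term converges to $\theta\nu_i$; what remains gives $\mu_i\leq\mu(\{x_i\})=\theta\nu_i$. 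The analogous computation with $\phi_R$ of Lemma~\ref{L.un.grad.phiR}, combined with Theorem~\ref{Theo.CCP.inf}, yields $\mu_\infty\leq\theta\nu_\infty$.

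\emph{Ruling out concentration.} Substituting $\mu_i\leq\theta\nu_i$ into \eqref{T.ccp.nu_mu} and using $p^\ast(x_i)=r(x_i)$, $q^\ast(x_i)=s(x_i)$ on $\mathscr C$,
\[
S\min\bigl\{\nu_i^{1/r(x_i)},\nu_i^{1/s(x_i)}\bigr\} \ \leq\ \max\bigl\{(\theta\nu_i)^{1/p(x_i)},(\theta\nu_i)^{1/q(x_i)}\bigr\}.
\]
Distinguishing the four cases according as $\nu_i,\theta\nu_i\lessgtr 1$ and solving for $\nu_i$ — then bounding the $x_i$-dependent exponents uniformly by $\tau_j,\sigma_j$ from the statement — shows that $\nu_i>0$ forces
$\nu_i\geq\min\bigl\{S^{\tau_1}\theta^{-\sigma_1-1},S^{\tau_2}\theta^{-\sigma_2-1}\bigr\}$.
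On the other hand, passing the identity of the previous step to the limit via Theorem~\ref{Theo.CCP.inf} gives
\[
c \ \geq\ \bigl(\tfrac{1}{q^+} - \tfrac{1}{r^-}\bigr)\bigl[\mu(\R^N)+\mu_\infty\bigr] \ \geq\ \bigl(\tfrac{1}{q^+} - \tfrac{1}{r^-}\bigr)\theta(\nu_i+\nu_\infty),
\]
so $\nu_i>0$ entails $c\geq\bigl(\tfrac{1}{q^+}-\tfrac{1}{r^-}\bigr)\min\{S^{\tau_1},S^{\tau_2}\}\min\{\theta^{-\sigma_1},\theta^{-\sigma_2}\}$, contradicting \eqref{PS1.c.cond}. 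The identical argument based on \eqref{T.ccp.inf.nu_mu} rules out $\nu_\infty>0$. The principal obstacle lies here: one must verify that in every one of the four sub-cases — and in particular in the two ``mixed'' ones in which $\nu_i$ and $\theta\nu_i$ lie on opposite sides of $1$ — the resulting lower bound on $\nu_i$ remains dominated by the two-factor minimum appearing in \eqref{PS1.c.cond}, with care taken over the sign of $S-1$ and $\theta-1$ when passing from pointwise to uniform exponents.

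\emph{Strong convergence.} Once $\nu_i\equiv 0$ and $\nu_\infty=0$, the limit $\lim_n\int_{\R^N}\mathcal B(x,|u_n|)\,\diff x=\int_{\R^N}\mathcal B(x,|u|)\,\diff x$ combined with $u_n\to u$ a.e.\ and the Brezis--Lieb-type Lemma~\ref{L.brezis-lieb} promotes weak to strong convergence in $L^\mathcal B(\R^N)$. Consequently $\int_{\R^N}B(x,u_n)(u_n-u)\,\diff x\to 0$, and by Theorem~\ref{Lem.com.sub} also $\int_{\R^N}f(x,u_n)(u_n-u)\,\diff x\to 0$. Plugging these into $\langle J'(u_n),u_n-u\rangle\to 0$ produces
\[
\limsup_{n\to\infty}\int_{\R^N}\bigl[\mathcal A(x,\nabla u_n)\cdot\nabla(u_n-u)+\lambda V A(x,u_n)(u_n-u)\bigr]\,\diff x\ \leq\ 0,
\]
whereupon the $\textup{(S)}_+$ property of Lemma~\ref{Lem.S+} delivers $u_n\to u$ in $X_\lambda$, completing the verification of $\textup{(PS)}_c$.
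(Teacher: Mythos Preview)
Your overall architecture coincides with the paper's: boundedness via $J(u_n)-\frac{1}{m}\langle J'(u_n),u_n\rangle$, the two CCPs, local testing with $\phi_{i,\delta}u_n$ and $\phi_R u_n$ to obtain $\mu_i\le\theta\nu_i$ and $\mu_\infty\le\theta\nu_\infty$, then Brezis--Lieb and $(\textup{S})_+$. The concern you flag about the four sub-cases is not the real obstacle; the paper sidesteps it by working with $\mu_i$ rather than $\nu_i$: from \eqref{T.ccp.nu_mu} and $\nu_i\ge\theta^{-1}\mu_i$ one gets $\mu_i\ge S^{\eta\beta/(\beta-\eta)}\theta^{-\eta/(\beta-\eta)}$ for \emph{some} $\eta\in\{p(x_i),q(x_i)\}$, $\beta\in\{r(x_i),s(x_i)\}$, and since $\frac{\eta\beta}{\beta-\eta}\in[\tau_1,\tau_2]$ and $\frac{\eta}{\beta-\eta}\in[\sigma_1,\sigma_2]$ one immediately has $\mu_i\ge\min\{S^{\tau_1},S^{\tau_2}\}\min\{\theta^{-\sigma_1},\theta^{-\sigma_2}\}$ without cases. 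Your two-term minimum $\nu_i\ge\min\{S^{\tau_1}\theta^{-\sigma_1-1},S^{\tau_2}\theta^{-\sigma_2-1}\}$ is in fact false in general (take $S>1$, $\theta>1$).

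The genuine gap is the displayed chain
\[
c\ \ge\ \Bigl(\tfrac{1}{q^+}-\tfrac{1}{r^-}\Bigr)\bigl[\mu(\R^N)+\mu_\infty\bigr]\ \ge\ \Bigl(\tfrac{1}{q^+}-\tfrac{1}{r^-}\Bigr)\theta(\nu_i+\nu_\infty).
\]
The second inequality does not follow from ``the identity of the previous step'': you have only $\mu_i\le\theta\nu_i$ and $\mu_\infty\le\theta\nu_\infty$, and these point the \emph{wrong} way to bound $\mu(\R^N)+\mu_\infty$ from below by $\theta(\nu_i+\nu_\infty)$. Separately, the first inequality with coefficient $\frac{1}{q^+}-\frac{1}{r^-}$ requires taking $m=r^-$ in the boundedness estimate, which is admissible only when $r^-\le\ell_1^-$, a relation the hypotheses do not impose. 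The paper avoids both problems by not routing through $\mu$ at all: from
\[
J(u_n)-\tfrac{1}{q^+}\langle J'(u_n),u_n\rangle\ \ge\ \Bigl(\tfrac{1}{q^+}-\tfrac{1}{r^-}\Bigr)\theta\int_{\R^N}\mathcal B(x,|u_n|)\,\diff x,
\]
(the $\mathcal H$- and $F$-contributions being nonnegative since $p(\cdot),q(\cdot)\le q^+<\ell_1(\cdot),\ell_2(\cdot)$), one passes to the limit via \eqref{T.ccp.infinity.nu} to get $c\ge(\frac{1}{q^+}-\frac{1}{r^-})\theta[\nu(\R^N)+\nu_\infty]\ge(\frac{1}{q^+}-\frac{1}{r^-})\theta\nu_i$ directly, and then combines with the lower bound on $\theta\nu_i\ge\mu_i$ above to reach the contradiction with \eqref{PS1.c.cond}.
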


%===========Proof of Lemma PS==============
\begin{proof}
Let $\lambda,\theta>0$, and let $c\in\R$ satisfy \eqref{PS1.c.cond}. Let $\{u_n\}_{n \in \mathbb{N}}$ be a $\textup{(PS)}_c$-sequence for $J $ in $X_\lambda$, namely,
\begin{equation}\label{Le.PS1.PS-seq}
	J(u_n)\to c\ \ \text{and}\ \ J'(u_n)\to 0 \quad \text{as} ~ n \to \infty.
\end{equation}
Putting $\alpha:=\min \{\ell_1^-,r^-\}$ and employing estimate \eqref{m-n}, % (see \eqref{def_H} for the definition of $\mathcal{H}$), 
we have that for $n$ large,
\begin{align*}
	\notag
	c+1+\|u_n\|_\lambda& \geq J(u_n)-\frac{1}{\alpha}\scal{J'(u_n),u_n}\\ \notag
	&\geq \round{\frac{1}{q^+}-\frac{1}{\alpha}} \left[ \int_{\mathbb{R}^N}\mathcal{H}(x,|\nabla u_n|) \diff x + \int_{\mathbb{R}^N}\lambda V (x)\mathcal{H}(x,|u_n|) \diff x \right]    \\ 
	&\geq  C_1\left(\|u_n\|_\lambda^{p^-}-1\right).
\end{align*}
This implies that $\{u_n\}_{n \in \N}$ is bounded in $X_\lambda$ since $p^->1$.
Then, according to Theorems~\ref{Theo.CCP1} and \ref{Theo.CCP.inf}, it holds, up to a subsequence, that
\begin{gather}
	u_n(x) \to u(x) \quad \text{a.a.} \ \ x \in\mathbb{R}^N,  \label{PS1.}\\
	u_n \rightharpoonup u  \quad \text{in} \  X_\lambda, \label{PS1.weak1}\\
	\mathcal{H}(\cdot,|\nabla u_n|)+ \lambda V \mathcal{H}(\cdot,|u_n|) \overset{\ast }{\rightharpoonup }\mu \geq \mathcal{H}(\cdot,|\nabla u|)+\lambda V \mathcal{H}(\cdot,|u|)+ \sum_{i\in I} \mu_i \delta_{x_i} \ \text{in}\  \mathcal{R}(\mathbb{R}^N),     
	\label{PS1.mu}\\
	\mathcal{B}(\cdot,|u_n|)\overset{\ast }{\rightharpoonup }\nu=\mathcal{B}(\cdot,|u|) + \sum_{i\in I}\nu_i\delta_{x_i} \ \text{in}\ \mathcal{R}(\mathbb{R}^N),\label{PS1.nu}\\
	S \min \left \{\nu_i^{\frac{1}{r(x_i)}},\nu_i^{\frac{1}{s(x_i)}}\right\}  \leq \max \left\{\mu_i^{\frac{1}{p(x_i)}},\mu_i^{\frac{1}{q(x_i)}}\right\}, \quad \forall i\in I,
	\label{PS1.mu.nu}
\end{gather}
and
\begin{gather}
	\limsup_{n \to \infty}\int_{\mathbb{R}^N} \Big[\mathcal{H}(x,|\nabla u_n|)+\lambda V(x) \mathcal{H}(x,|u_n|) \Big] \, \diff x = \mu(\mathbb{R}^N)+\mu_\infty, 
	\label{PS1.mu.inf}\\
	\underset{n\to\infty}{\limsup}\int_{\mathbb{R}^N}\mathcal{B}(x,|u_n|)\diff x = \nu(\mathbb{R}^N)+\nu_\infty,
	\label{PS1.nu.inf}\\
	S  \min\left\{\nu_\infty^{\frac{1}{r_\infty}},\nu_\infty^{\frac{1}{s_\infty}}\right\} \leq \max\left\{\mu_\infty^{\frac{1}{p_\infty}},\mu_\infty^{\frac{1}{q_\infty}}\right\} .\label{PS1.mu.nu.inf}
\end{gather}
We claim that $I=\emptyset$.  Suppose contrarily that there is $i \in I$. For each $\delta>0$, let $\phi_{i,\delta}$ be defined as in Lemma~\ref{L.un.grad.p}. 
For any $n\in\N$, it is clear that $\phi_{i,\delta}u_n \in X_\lambda$; hence, 
\begin{multline}\label{Deri.J}	
	\int_{\mathbb{R}^N} \phi_{i,\delta} \Big[\mathcal{H}(x,|\nabla u_n|) + \lambda V(x) \mathcal{H}(x,| u_n|) \Big] \diff x = 	\langle J'(u_n) ,\phi_{i,\delta}u_n \rangle  + \theta \int_{\mathbb{R}^N}\phi_{i,\delta} \mathcal{B}(x,|u_n|)\diff x \\
	+ \int_{\mathbb{R}^N}\phi_{i,\delta} F(x,|u_n|) \diff x - \int_{\mathbb{R}^N} \mathcal{A}(x,\nabla u_n) \cdot \nabla \phi_{i,\delta} u_n \diff x.
\end{multline}
Let $\epsilon>0$ be arbitrary. Invoking \eqref{young}, we have 
\begin{align} \label{Deri.J.b}
	\int_{\RN} \Big| \mathcal{A}(x,\nabla u_n) \cdot \nabla \phi_{i,\delta} u_n \Big| \diff x & \leq \int_{\RN} |\nabla u_n|^{p(x)-1}|\nabla \phi_{i,\delta}||u_n|\diff x  + \int_{\mathbb{R}^N} a(x)|\nabla u_n|^{q(x)-1}|\nabla \phi_{i,\delta}||u_n|\diff x \notag\\
	&\leq \epsilon \int_{\mathbb{R}^N} \mathcal{H}(x,|\nabla u_n|)\diff x + C_\epsilon \int_{\mathbb{R}^N} \mathcal{H}(x,|\nabla \phi_{i,\delta} u_n|)\diff x\notag\\
	&\leq \epsilon M + C_\epsilon \int_{\mathbb{R}^N} \mathcal{H}(x,|\nabla \phi_{i,\delta} u_n|)\diff x,
\end{align}
where $C_\epsilon>0$ is independent of $n$ and $\delta$, and
\begin{equation} \label{M}
	M:=\sup_{n \in \mathbb{N}} \int_{\mathbb{R}^N}  \left[\mathcal{H}(x,|\nabla u_n|)+\lambda V(x) \mathcal{H}(x,|u_n|)\right] \diff x \in (0,\infty)
\end{equation}
due to the boundedness of  $\{u_n\}_{n \in \N}$ in $X_\lambda$. Utilizing \eqref{Deri.J.b} and the fact that $\spp (\phi_{i,\delta}) \subset B_\delta(x_i)$, we derive from \eqref{Deri.J} that
\begin{align}\label{PS1.est1}  
	\notag
	&\int_{\mathbb{R}^N} \phi_{i,\delta} \Big[\mathcal{H}(x,|\nabla u_n|)+\lambda V(x) \mathcal{H}(x,|u_n|) \Big] \diff x   \leq \langle J'(u_n),\phi_{i,\delta} u_n \rangle + \theta \int_{\mathbb{R}^N} \phi_{i,\delta} \mathcal{B}(x,|u_n|) \diff x  \\
	& \hspace{2cm}  +  \int_{B_\delta(x_i)} \phi_{i,\delta} F(x,|u_n|) \diff x+\epsilon M +C_\epsilon \int_{B_\delta(x_i)} \mathcal{H}(x,| \nabla \phi_{i,\delta} u_n|) \diff x .
\end{align}
Clearly, $\{\phi_{i,\delta}u_n\}_{n \in \mathbb{N}}$ is bounded in $X_\lambda$; hence, \eqref{Le.PS1.PS-seq} implies that
\begin{equation} \label{PS1.lim1}
	\lim_{n \to \infty} \langle J'(u_n),\phi_{i,\delta} u_n \rangle = 0.
\end{equation}
By invoking \eqref{XV.comp.emd.LF}, it follows from \eqref{PS1.weak1} that
\begin{equation}  \label{PS.c.lim2}
	\lim_{n \to \infty} \int_{B_\delta(x_i)} \phi_{i,\delta}  F(x,|u_n|) \diff x = 	\int_{B_\delta(x_i)} \phi_{i,\delta}  F(x,|u|) \diff x.
\end{equation}
Also, as obtained in \eqref{PL3.6-1} we have
\begin{equation} \label{PS1.lim3}
	\lim_{n \to \infty} \int_{B_\delta(x_i)} \mathcal{H}(x,| \nabla \phi_{i,\delta} u_n|) \diff x =  \int_{B_\delta(x_i)} \mathcal{H}(x,| \nabla \phi_{i,\delta} u|) \diff x .
\end{equation}
Passing to the limit as $n \to \infty$  in  \eqref{PS1.est1} and employing \eqref{PS1.mu}--\eqref{PS1.nu},  \eqref{PS1.lim1}--\eqref{PS1.lim3} we arrive at
\begin{equation}\label{PS1.mu.nu.lim}
	\int_{\RN} \phi_{i,\delta}  \diff \mu \leq \theta \int_{\RN}\phi_{i,\delta} \diff \nu+\int_{B_\delta(x_i)} \phi_{i,\delta} F(x,|u|) \diff x+\epsilon M + C_\epsilon\int_{B_\delta(x_i)} \mathcal{H}(x,| \nabla \phi_{i,\delta} u|) \diff x .
\end{equation}
The fact that $F(\cdot,|u|)\in L^1(\RN)$ gives
\begin{equation}\label{PS1.lim4}
	\lim_{\delta \to 0^+} \int_{B_\delta(x_i)} \phi_{i,\delta} F(x,|u|) \diff x = 0.
\end{equation}
On the other hand, applying Lemma \ref{L.un.grad.p} yields
\begin{equation}\label{PS1.lim5}
	\lim_{\delta \to 0^+} \int_{B_\delta(x_i)} \mathcal{H}(x,| \nabla \phi_{i,\delta} u|) \diff x =0.
\end{equation}
By letting $\delta \to 0^+$ in \eqref{PS1.mu.nu.lim} and using \eqref{PS1.lim4}--\eqref{PS1.lim5} we have
\begin{equation}\label{4.m-n}
	\mu_i \leq \theta \nu_i + \epsilon M.
\end{equation}
This leads to
\begin{equation} \label{PL4.3-mn}
	\mu_i \leq \theta \nu_i,
\end{equation}
since $\epsilon>0$ was chosen arbitrarily. From \eqref{PL4.3-mn} and  \eqref{PS1.mu.nu} we obtain
\begin{equation*}  \label{PS1.est2}
	S\min\left\{(\theta^{-1}\mu_i)^{\frac{1}{r(x_i)}},(\theta^{-1}\mu_i)^{\frac{1}{s(x_i)}} \right\} \leq \max\left\{ \mu_i^{\frac{1}{p(x_i)}}, \mu_i^{\frac{1}{q(x_i)}} \right \}.
\end{equation*}
This yields
\begin{equation} \label{PS1.mu_i}
	\mu_i \geq S^{\frac{\eta_i\beta_i}{\beta_i- \eta_i}}\theta^{-\frac{\eta_i}{\beta_i-\eta_i}},
\end{equation}
where $\eta_i \in \{p(x_i),q(x_i)\}$ and $\beta_i \in \{r(x_i),s(x_i)\}$. It is not difficult to see that
\begin{equation*}
	\left(\frac{ps}{s-p}\right)^-\leq \frac{p(x_i)s(x_i)}{s(x_i)-p(x_i)}\leq \frac{\eta_i\beta_i}{\beta_i-\eta_i}\leq \frac{q(x_i)r(x_i)}{r(x_i)-q(x_i)}\leq \left(\frac{qr}{r-q}\right)^+
\end{equation*}
and
\begin{equation*}
	\left(\frac{p}{s-p}\right)^-\leq\frac{p(x_i)}{s(x_i)-p(x_i)}\leq \frac{\eta_i}{\beta_i-\eta_i}\leq \frac{q(x_i)}{r(x_i)-q(x_i)}\leq \left(\frac{q}{r-q}\right)^+.
\end{equation*}
The last two inequalities jointly with \eqref{PS1.mu_i} and \eqref{4.m-n} imply
\begin{equation} \label{PS1.mu_i.2}
	\theta \nu_i \geq	\mu_i \geq \min\left\{S^{\tau_1},S^{\tau_2}\right\}\min \left\{\theta^{-\sigma_1},\theta^{-\sigma_2}\right\},
\end{equation}
where 
\begin{equation}\label{PL4.3-0}
	\tau_1:=\left(\frac{ps}{s-p}\right)^-,\ \tau_2:=\left(\frac{qr}{r-q}\right)^+,\ \sigma_1:=\left(\frac{p}{s-p}\right)^-\  \text{and} \ \sigma_2:=\left(\frac{q}{r-q}\right)^+.
\end{equation}
On the other hand, since $q^+ < \min\{\ell_1^-,r^-\}$, it follows from  \eqref{Le.PS1.PS-seq} and \eqref{PS1.nu.inf} that
\begin{align}\label{PS1.c.nu}
	\notag c &=\lim_{n\to\infty}\left[J(u_n)-\frac{1}{q^+}\langle J'(u_n) ,u_n\rangle \right]\\
	&\geq  \left(\frac{1}{q^+}-\frac{1}{r^-}\right) \theta \, \limsup_{n \to \infty} \int_{\mathbb{R}^N}\mathcal{B}(x,|u_n|) \diff x = \left(\frac{1}{q^+}-\frac{1}{r^-}\right)\theta\left[\nu(\mathbb{R}^N)+\nu_\infty\right]. 
\end{align}
Taking  into account  \eqref{PS1.nu}, \eqref{PS1.mu_i.2} and \eqref{PS1.c.nu} we obtain
\begin{equation}\label{PL4.3-c}
	c \geq   \left(\frac{1}{q^+}-\frac{1}{r^-}\right) \theta \nu_i
	\geq
	\left(\frac{1}{q^+}-\frac{1}{r^-}\right)\min\left\{S^{\tau_1},S^{\tau_2}\right\}\min \left\{\theta^{-\sigma_1},\theta^{-\sigma_2}\right\}.
\end{equation}
This is in contrast to \eqref{PS1.c.cond}; in other words, $I=\emptyset$.

Next, we aim to show that $\nu_\infty=\mu_\infty=0$. To this end, we claim
\begin{equation}\label{PS1.mu.nu.inf.1}
	\mu_\infty \leq \theta\nu_\infty.
\end{equation} 
Indeed, for each $R>0$ let $\phi_R$ be defined as in Lemma~\ref{L.un.grad.phiR}. It follows that
\begin{gather}
	\notag
	\int_{\mathbb{R}^N} \phi_R \Big[\mathcal{H}(x,|\nabla u_n|)+\lambda V(x)\mathcal{H}(x,|u_n|)\Big]\diff x = \Big\langle J'(u_n) ,\phi_R u_n \Big \rangle +  \int_{\mathbb{R}^N}F(x,|u_n|)\phi_R \diff x\\ 
	+ \theta\int_{\mathbb{R}^N}  \mathcal{B}(x,|u_n|) \phi_R \diff x
	- \int_{\mathbb{R}^N}\mathcal{A}(x,\nabla u_n) \cdot \nabla \phi_R u_n  \diff x. \label{PS1.mu.nu.inf.2}
\end{gather}
From the boundedness of $\{\phi_R u_n\}_{n \in \N}$ in $X_\lambda$ and \eqref{Le.PS1.PS-seq}, it follows that
\begin{align}\label{J'un}
	\lim_{ n \to \infty}\Big\langle J'(u_n) ,\phi_R u_n \Big\rangle=0.
\end{align}
Using the same arguments leading to \eqref{T.ccp.inf.mu_inf} and \eqref{T.ccp.inf.nu_inf}, we obtain
\begin{equation}\label{PS1.mu.nu.inf.3}
	\mu_\infty=\lim_{R\to\infty}\limsup_{n\to\infty}\int_{\mathbb{R}^N} \phi_R \Big[\mathcal{H}(x,|\nabla u_n|)+\lambda V(x)\mathcal{H}(x,|u_n|)\Big]\diff x,
\end{equation}
and 
\begin{equation}\label{PS1.nu.inf.est1}
	\nu_\infty=\lim_{R\to\infty}\limsup_{n \to \infty}\int_{\mathbb{R}^N}\phi_R \mathcal{B}(x,|u_n|) \diff x.
\end{equation}
In view of the compact embedding \eqref{XV.comp.emd.LF}, it follows from \eqref{PS1.weak1} that
\begin{equation} \label{PS1.ets.f}
	\lim_{R \rightarrow \infty}\lim_{ n \to \infty}\int_{\mathbb{R}^N} F(x,|u_n|) \phi_R \diff x = \lim_{R \rightarrow \infty} \int_{\mathbb{R}^N}  F(x,|u|) \phi_R \diff x = 0.
\end{equation}
Similar arguments to those leading to \eqref{Deri.J.b} give, for an arbitrary $\eps>0$,
\begin{align} \label{A.nabla.phi}
	\int_{\mathbb{R}^N} \Big| \mathcal{A}(x,\nabla u_n) \cdot \nabla \phi_R u_n \Big| \diff x
	& \leq \eps M + C_\eps \int_{\mathbb{R}^N}\mathcal{H}(x,| \nabla \phi_R u_n|)\diff x,
\end{align}
with $M$ given by \eqref{M} and $C_\eps>0$ independent of $n$ and $R$.
Using Lemma \ref{L.un.grad.phiR} we have
\begin{equation} \label{H.nabla.phiR}
	\lim_{R \to \infty}\limsup_{n \to \infty} 	\int_{\mathbb{R}^N} \mathcal{H}(x,|\nabla \phi_R u_n|) \diff x =0.
\end{equation} 
Taking limit superior in \eqref{PS1.mu.nu.inf.2} as $n \to \infty$ and then taking limit as $R \to \infty$ with taking into account \eqref{J'un}-\eqref{H.nabla.phiR}, we obtain
\begin{equation*}
	\mu_\infty\leq\theta\nu_\infty + \eps M.
\end{equation*}
Hence,  \eqref{PS1.mu.nu.inf.1} holds since $\eps>0$ is small arbitrarily.
Now, suppose on the contrary that $\nu_\infty>0$. From \eqref{PS1.mu.nu.inf} and \eqref{PS1.mu.nu.inf.1},  we have
\begin{equation*}
	S\min\left\{ (\theta^{-1}\mu_\infty)^{\frac{1}{s_\infty}}, (\theta^{-1}\mu_\infty)^{\frac{1}{r_\infty}} \right\} \leq \max\left\{ \mu_\infty^{\frac{1}{p_\infty}}, \mu_\infty^{\frac{1}{q_\infty}} \right\}.
\end{equation*}
This leads to
\begin{equation}\label{PS1.nu.inf.est4}
	\mu_\infty \geq S^{\frac{\eta_\infty\beta_\infty}{\beta_\infty-\eta_\infty}}\theta^{-\frac{\eta_\infty}{\beta_\infty-\eta_\infty}},
\end{equation}
with $\eta_\infty \in \{p_\infty,q_\infty\}$ and $\beta_\infty \in \{r_\infty,s_\infty\}$. Note that the assumptions on exponents yield $p_\infty\leq q_\infty< r_\infty\leq s_\infty$. We have
\begin{equation*}
	\left(\frac{ps}{s-p}\right)^- \leq\frac{p(x) s(x) }{s(x) -p(x)}\leq \frac{q(x) r(x) }{r(x) -q(x) } \leq \left(\frac{qr}{r-q}\right)^+, \quad \forall x \in \mathbb{R}^N.
\end{equation*}
Thus,
\begin{equation}\label{PL4.3-1}
	\left(\frac{ps}{s-p}\right)^- \leq\frac{p_\infty s_\infty }{s_\infty -p_\infty}\leq \frac{q_\infty r_\infty }{r_\infty -q_\infty } \leq \left(\frac{qr}{r-q}\right)^+.
\end{equation}
On the other hand, for $\eta_\infty \in \{p_\infty,q_\infty\}$ and $\beta_\infty \in \{r_\infty,s_\infty\}$ we have
\begin{equation}\label{PL4.3-2}
	\frac{p_\infty s_\infty }{s_\infty -p_\infty }\leq \frac{\eta_\infty \beta_\infty }{\beta_\infty -\eta_\infty }\leq \frac{q_\infty r_\infty }{r_\infty -q_\infty }.
\end{equation}
Combining \eqref{PL4.3-1} with \eqref{PL4.3-2} gives
\begin{equation*}
	\left(\frac{ps}{s-p}\right)^- \leq \frac{\eta_\infty \beta_\infty}{\beta_\infty-\eta_\infty} \leq \left(\frac{qr}{s-p}\right)^+.
\end{equation*}
Similarly, it holds that
\begin{equation*}
	\left(\frac{p}{s-p}\right)^- \leq \frac{\eta_\infty }{\beta_\infty-\eta_\infty} \leq \left(\frac{q}{r-q}\right)^+.
\end{equation*}
The last two estimates, together with \eqref{PS1.mu.nu.inf.1} and \eqref{PS1.nu.inf.est4}, imply that
$$\theta\nu_\infty\geq \mu_\infty \geq \min\left\{S^{\tau_1},S^{\tau_2}\right\}\min \left\{\theta^{-\sigma_1},\theta^{-\sigma_2}\right\},$$	
with $\tau_1$, $\tau_2$, $\sigma_1$ and $\sigma_2$ given in \eqref{PL4.3-0}. From this and \eqref{PS1.c.nu}, we obtain	
\begin{align*}
	c &\geq \left(\frac{1}{q^+}-\frac{1}{r^-}\right)\theta\nu_\infty
	\geq  \left(\frac{1}{q^+}-\frac{1}{r^-}\right)\min\left\{S^{\tau_1},S^{\tau_2}\right\}\min \left\{\theta^{-\sigma_1},\theta^{-\sigma_2}\right\},
\end{align*}
a contradiction. Thus, $\nu_\infty=\mu_\infty=0.$	
By this and the fact that $I=\emptyset$, we deduce from \eqref{PS1.nu} and \eqref{PS1.nu.inf} that
\begin{equation}\label{PL4.3-lim}\underset{n\to\infty}{\lim\sup}\int_{\mathbb{R}^N}\mathcal{B}(x,|u_n|)\diff x=\int_{\mathbb{R}^N}\mathcal{B}(x,|u|)\diff x.
\end{equation}
By \eqref{PS1.} and Fatou's lemma, we obtain
$$
\int_{\mathbb{R}^N}\mathcal{B}(x,|u|)\diff x\leq \liminf_{n\to \infty}\int_{\mathbb{R}^N}\mathcal{B}(x,|u_n|)\diff x.
$$
Combining this with \eqref{PL4.3-lim} gives
$$\lim_{n\to\infty}\int_{\mathbb{R}^N}\mathcal{B}(x,|u_n|)\diff x=\int_{\mathbb{R}^N}\mathcal{B}(x,|u|)\diff x.$$
From this and Lemma~\ref{L.brezis-lieb} we obtain
$$\lim_{n\to\infty}\int_{\mathbb{R}^N}\mathcal{B}(x,|u_n-u|)\diff x=0,$$
and thus, in view of Proposition~\ref{prop.nor-mod.D}, it holds
\begin{equation*}
	u_n \to u \ \  \text{in} \ \ L^\mathcal{B}(\mathbb{R}^N).
\end{equation*}
Finally, analysis similar to that in the last part of \cite[Proof of Lemma 5.1]{HaHo2023}, taking into account Lemma~\ref{Lem.S+}, shows that $u_n \to u$ in $X_\lambda$. The proof is complete.
\end{proof}

%=====================GEOMETRY OF J(u)======================================

The next lemma provides a Mountain Pass geometry of the functional $J$.
\begin{lemma} \label{Lem.J(u).geo}
Let $\lambda >0$ and $\theta>0$. Then, the following assertions hold.
\begin{enumerate}
	\item[(i)] There exist $\delta \in (0,1)$ and $\rho>0$ such that $J(u) \geq \rho $ if $\|u\|_\lambda =\delta$. 
	\item[(ii)] There exists $v\in X_\lambda$ independent of $\theta$  such that $\|v\|_{\lambda} >1$ and $J(v)<0$ for all $\theta>0$.
	\item[(iii)] If $(\calbf{V}_2)$ is additionally assumed, then there exists $v\in X_V$ independently of $\lambda$ and $\theta$ such that $\|v\|_{\lambda} >1$ and $J(v)<0$ for all $\lambda>0$ and $\theta>0$.
\end{enumerate} 
\end{lemma}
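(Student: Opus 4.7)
The plan is to deduce all three assertions from standard modular/norm comparisons, combined decisively with the hypothesis $q^+<\min\{\ell_1^-,r^-\}$, which forces the leading $\mathcal{H}$-terms in $J$ to dominate the subcritical/critical perturbations near the origin and to be dominated by them far from the origin.

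For part $(i)$, I would let $\delta\in(0,1)$ and assume $\|u\|_\lambda=\delta$. From $\widehat A(x,t)\geq \mathcal H(x,t)/q^+$ and \eqref{m-n}, the leading pair of terms in $J(u)$ is bounded below by
\[
\frac{1}{q^+}\rho_\lambda(u)\geq \frac{1}{q^+}\|u\|_\lambda^{q^+}.
\]
For the nonlinear terms I would exploit Theorem~\ref{Lem.com.sub} to get $X_\lambda\hookrightarrow L^F(\RN)$, so $\|u\|_F\leq C\|u\|_\lambda$, and then Proposition~\ref{prop.nor-mod.D} applied to the modular $\int F(x,|u|)\,\diff x$ on the small-norm branch gives
\[
\int_{\RN}\widehat F(x,u)\,\diff x\leq \frac{1}{\ell_1^-}\int_{\RN}F(x,|u|)\,\diff x\leq C_1\|u\|_\lambda^{\ell_1^-}
\]
once $\delta$ is chosen with $\|u\|_F<1$. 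An identical argument, based on the embedding $X_\lambda\hookrightarrow L^{\mathcal B}(\RN)$ from Theorem~\ref{T.mainE}(i), yields $\int_{\RN}\widehat B(x,u)\,\diff x\leq C_2\|u\|_\lambda^{r^-}$. Together these produce
\[
J(u)\geq \|u\|_\lambda^{q^+}\Bigl(\tfrac{1}{q^+}-C_1\|u\|_\lambda^{\ell_1^--q^+}-\theta C_2\|u\|_\lambda^{r^--q^+}\Bigr),
\]
and shrinking $\delta$ makes the bracket $\geq 1/(2q^+)$, proving $(i)$ with $\rho=\delta^{q^+}/(2q^+)$.

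For part $(ii)$, I would fix any nonnegative $\varphi\in C_c^\infty(\RN)\setminus\{0\}$; since $b_1>0$ a.e., the constant $\gamma:=\int_{\RN}b_1(x)\varphi(x)^{\ell_1(x)}\,\diff x$ is strictly positive. For $t>1$ the elementary inequalities $t^{p(x)},t^{q(x)}\leq t^{q^+}$ and $t^{\ell_1(x)}\geq t^{\ell_1^-}$ give
\[
\int_{\RN}\!\bigl[\widehat A(x,|\nabla(t\varphi)|)+\lambda V(x)\widehat A(x,t\varphi)\bigr]\diff x\leq \frac{t^{q^+}}{p^-}\rho_\lambda(\varphi),\qquad \int_{\RN}\widehat F(x,t\varphi)\,\diff x\geq \frac{t^{\ell_1^-}}{\ell_2^+}\gamma,
\]
while the $\theta\int\widehat B$-term can simply be discarded as nonnegative. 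Since $\ell_1^->q^+$, it follows that $J(t\varphi)\to-\infty$ as $t\to\infty$, and choosing $t$ large enough so that $\|t\varphi\|_\lambda>1$ and $J(t\varphi)<0$ furnishes the vector $v=t\varphi$, manifestly independent of $\theta$.

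For part $(iii)$, I would specialize the choice in $(ii)$ to $\varphi\in C_c^\infty(\Omega_V)\setminus\{0\}$, extended by zero outside $\Omega_V$. By $(\calbf V_2)$, $V\equiv 0$ on $\supp\varphi\subset\Omega_V$, so the $\lambda V$-contribution disappears from both $\rho_\lambda(\varphi)$ and $J(t\varphi)$; hence $\rho_\lambda(\varphi)=\int_{\Omega_V}\mathcal H(x,|\nabla\varphi|)\,\diff x$, $\gamma$, and the norm $\|t\varphi\|_\lambda$ (which coincides with $\|t\varphi\|$ computed with $V\equiv 0$) are all independent of $\lambda$. The same threshold for $t$ as in $(ii)$ therefore produces a single $v=t\varphi\in X_V$ working for all $\lambda,\theta>0$. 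The only mildly delicate point throughout is the bookkeeping in $(i)$---tracking the small-norm branch $\|u\|_F<1$ and checking that the constants $C_1,C_2$ are independent of $u$---but this is immediate from the continuity of the relevant embeddings once the correct branch in Proposition~\ref{prop.nor-mod.D} is selected; no idea beyond those already employed in the bounded-domain analogue seems to be required.
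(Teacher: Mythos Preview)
Your proposal is correct and follows essentially the same route as the paper: bound $J$ below via $\widehat A\geq \mathcal H/q^+$, the modular--norm relation \eqref{m-n}, and the embeddings $X_\lambda\hookrightarrow L^F$, $X_\lambda\hookrightarrow L^{\mathcal B}$ on the small-norm branch for $(i)$; then for $(ii)$ and $(iii)$ scale a fixed test function and exploit $\ell_1^->q^+$ together with the sign of the discarded $\theta\widehat B$-term, choosing $\varphi\in C_c^\infty(\Omega_V)$ in $(iii)$ so that the $\lambda V$-contribution vanishes identically. The only cosmetic differences are that the paper keeps the full $\int F(x,|\phi|)\,\diff x$ rather than your $\gamma=\int b_1\varphi^{\ell_1}$, and in $(iii)$ it normalizes $\|\phi_0\|=1$ first so that $\|\tau\phi_0\|_\lambda=\tau$ is manifestly independent of $\lambda$, whereas you argue directly that the modular (hence the norm) of $t\varphi$ is $\lambda$-free since $V\equiv 0$ on $\operatorname{supp}\varphi$.
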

\begin{proof}
Let $\theta,\lambda >0$. % and let $F$ be given in \eqref{def.F}. 
By \eqref{Xl.emb.B} and \eqref{XV.comp.emd.LF}, we find $C_1>1$ such that
\begin{equation}\label{4norms}
	\max\{\|u\|_\mathcal{B},\|u\|_{F}\} \leq C_1 \|u\|_\lambda,~~ \forall u \in X_\lambda.
\end{equation}
For any $u\in X_\lambda$ with $\|u\|_\lambda = \delta\in \left(0,C_1^{-1}\right)$, we apply Proposition~\ref{prop.nor-mod.D}, together with \eqref{m-n} and \eqref{4norms}, to obtain 
\begin{align*}
	J (u)  &\geq \frac{1}{q^+}  \int_{ \mathbb{R}^N} \Big[ \mathcal{H}(x,|\nabla u|) + \lambda V(x) \mathcal{H}(x,| u|) \Big] \diff x -\frac{1}{\ell_1^-} \int_{ \mathbb{R}^N}  F(x,|u|)  \diff x
	-\frac{\theta}{r^-} \int_{ \mathbb{R}^N} \mathcal{B}(x,|u|) \diff x\notag \\
	&\geq \frac{1}{q^+} \|u\|^{q^+}_\lambda -\frac{1}{\ell_1^-}\left(C_1\|u\|_\lambda\right)^{\ell_1^-} - \frac{\theta}{r^-}\left(C_1\|u\|_\lambda\right)^{r^-}= \frac{1}{q^+} \delta^{q^+} -\frac{C_1^{\ell_1^-}}{\ell_1^-}\delta^{\ell_1^-} - \frac{C_1^{r^-}\theta}{r^-}\delta^{r^-}:=\rho.
\end{align*}	
Since $q^+<\min\{\ell_1^-,r^-\}$, we can choose $\delta>0$ small enough such that $\rho>0$, and thus, $(i)$ has been shown. 

In order to get $(ii)$, let us fix $\phi \in X_\lambda \setminus \{0\}$. For any $\tau>1$ we have
\begin{equation*}
	J(\tau\phi) \leq \frac{\tau^{q^+}}{p^-}\int_{\mathbb{R}^N} \Big[ \mathcal{H}(x,|\nabla \phi|) + \lambda V(x) \mathcal{H}(x,|\phi|) \Big]\diff x  -\frac{\tau^{\ell_1^-}}{\ell_2^+} \int_{ \mathbb{R}^N}  F(x,\phi|)  \diff x \to -\infty \ \text{ as } \ \tau \to \infty.
\end{equation*}
Thus, by taking $v=\tau\phi$ with $\tau>0$ large enough, it holds  $\|v\|_\lambda>1$, and $J(v)<0$ for all $\theta>0$. Clearly, $v$ is independent of $\theta$.

Finally, for showing $(iii)$, let us fix $\phi_0 \in C_c^{\infty}(\Omega_V)$ with $\|\phi_0\|=1$; hence, it holds $\|\phi_0\|_\lambda=1$ for all $\lambda>0$. For any $\tau>1$ we have
\begin{equation*}
	J(\tau\phi_0) \leq \frac{\tau^{q^+}}{p^-}\int_{\mathbb{R}^N} \mathcal{H}(x,|\nabla \phi_0|)\diff x -\frac{\tau^{\ell_1^-}}{\ell_2^+} \int_{ \mathbb{R}^N}  F(x,\phi_0|)  \diff x \to -\infty \ \text{ as } \ \tau \to \infty.
\end{equation*}
Thus, by taking $v=\tau\phi_0$ with $\tau>0$ large enough, the conclusion of $(iii)$ follows. The proof is complete.

\end{proof}

%=============Setting critical level for Jlambda=====================

Let $\rho$ and $v$ be determined in Lemma~\ref{Lem.J(u).geo} $(i)$-$(ii)$. Define
\begin{equation}\label{c_lamb}
c_{\lambda}:=\inf _{\gamma \in \Gamma_{\lambda}} \max _{0 \leqslant  \tau \leqslant 1} J(\gamma(\tau)) ,
\end{equation}
where
\begin{equation}
\Gamma_{\lambda}:=\left\{\gamma \in C\left([0,1],\, X_\lambda\right): \gamma(0)=0, \gamma(1)=v\right\}.
\end{equation}
From Lemma~\ref{Lem.J(u).geo} $(i)$, the definition of $c_\lambda$, and the fact that $\gamma(\tau)=\tau v\in \Gamma_{\lambda}$, we have
\begin{equation} \label{c_l}
0<\rho\leq c_\lambda\leq\max_{0\leq\tau\leq 1}J(\tau v).
\end{equation}
For all $0\leq \tau \leq 1$, we have
\begin{align*}	
J(\tau v) &\leq \frac{\tau^{p^-}}{p^-} \int_{\mathbb{R}^N} \Big[ \mathcal{H}(x,|\nabla v|) + \lambda V(x) \mathcal{H}(x,|v|) \Big]\diff x  -\frac{\tau^{\ell_2^+}}{\ell_2^+}  \int_{\mathbb{R}^N} F(x,|v|)\diff x \notag \\
&\leq  a_0\tau^{p^-}-b_0\tau^{\ell_2^+}:=g(\tau),
\end{align*}
where 
$$a_0:=\frac{1}{p^-}\displaystyle\int_{\mathbb{R}^N} \Big[ \mathcal{H}(x,|\nabla v|) + \lambda V(x) \mathcal{H}(x,|v|) \Big]\diff x >0\ \ \text{and} \ \ b_0:=\frac{1}{\ell_2^+}\displaystyle\int_{\mathbb{R}^N} F(x,|v|)\diff x>0.$$
Thus,
\begin{align}\label{g}
0<c_\lambda\leq \max _{\tau\in [0,1]}g(\tau)=g\left(d_0\right)=a_0d_0^{p^-}-b_0d_0^{\ell_2^+}.
\end{align}
where $d_0:=\min\left\{1, \left(\frac{a_0p^-}{b_0\ell_2^+}\right)^{\frac{1}{\ell_2^+ - p^-}}\right\}$. 

\vskip5pt
\begin{proof}[\rm \textbf{Proof of Theorem~\ref{Theo.Superlinear}}]
Let $\lambda>0$ be given. Note that with $v$ found in Lemma~\ref{Lem.J(u).geo} $(ii)$, $g(d_0)$ is independent of $\theta$. Thus, we find $\theta_0>0$ such that
\begin{equation} \label{theta_0}
	0<g(d_0)<\round{\frac{1}{q^+}-\frac{1}{r^-}}\min\left\{S^{\tau_1},S^{\tau_2}\right\}\min \left\{\theta^{-\sigma_1}, \theta^{-\sigma_2} \right\},\quad \forall \theta\in (0,\theta_0).
\end{equation}
Let $\theta\in (0,\theta_0)$. From \eqref{g} and \eqref{theta_0}, it holds that
\begin{equation} \label{Bound.level.c}
	0<\rho\leq c_\lambda < \round{\frac{1}{q^+}-\frac{1}{r^-}}\min\{S^{\tau_1},S^{\tau_2}\}\min \{\theta^{-\sigma_1}, \theta^{-\sigma_2} \}.
\end{equation}
By invoking \cite[Lemma 3.1]{Garcia}, we infer from the Mountain Pass geometry of $J$ obtained in Lemma~\ref{Lem.J(u).geo} that there exists a $\textup{(PS)}_{c_\lambda}$-sequence $\{u_n\}_{n\in\N}$ for $J$ in $X_\lambda$. Then, by \eqref{Bound.level.c} and Lemma~\ref{Lem.PS1.c}, we have that $u_n\to u_\lambda$ along a subsequence in $X_\lambda$; hence, $J'(u_\lambda)=0$ and $J(u_\lambda)=c_\lambda>0$. Thus, $u_\lambda$ is a nontrivial weak solution to problem \eqref{eq1'}. The proof is complete.
\end{proof}
%=================COMPLETE PROOF FOR CONCENTRATION==============

\vskip5pt
Next, we aim to prove Theorem~\ref{Theo.Superlinear2} when $(\calbf{V}_2)$ is additionally assumed. Define
$$X_0:=\{u\in X_V:\ u=0\ \ \text{a.e. in } \Omega_V^c\}.$$
Note that for any $u\in X_0$ it holds that $u\big|_{\Omega_V}\in  W^{1,\mathcal{H}}(\Omega_V)\cap W_0^{1,1}(\Omega_V)$. Indeed, for $u\in X_0$, it holds $u\in W^{1,\mathcal{H}}(\mathbb{R}^N)$ due to Theorem~\ref{T.Xv-P}. This implies that $u\big|_{\Omega_V}\in W^{1,\mathcal{H}}(\Omega_V)$ and $u\in W^{1,p^-}(\RN)$. Hence, by \cite[Proposition 9.18]{Bre.Book} we infer $u\big|_{\Omega_V}\in W_0^{1,1}(\Omega_V)$.

Let us consider $\widetilde{J}: X_0\to\R$ defined as
\begin{align*}
	\widetilde{J}(u):=\int_{ \Omega_V} \widehat{A}(x,\nabla u)\diff x 
	- \int_{ \Omega_V} \widehat{F}(x,u)\diff x - \theta \int_{ \Omega_V} \widehat{B}(x,u)\diff x,\quad u \in X_0. 
\end{align*}
Obviously, $\widetilde{J}=J\big|_{X_0}$ and its critical points are weak solutions to problem \eqref{eq2}. 

With $\rho$ and $v$ being determined in Lemma~\ref{Lem.J(u).geo} $(i)$ and $(iii)$, we define $c_\lambda$ and $g(d_0)$ as in \eqref{c_lamb} and \eqref{g}, respectively. Furthermore, define
\begin{equation}\label{c_*}
	c_*:=\inf _{\gamma \in \overline{\Gamma}(\Omega_V)} \max _{0 \leqslant \tau \leqslant 1}\widetilde{J}(\gamma(\tau)), 
\end{equation}
where
\begin{equation}
	\overline{\Gamma}(\Omega_V):=\left\{\gamma \in C\left([0,1], X_0\right): \gamma(0)=0, \gamma(1)=v\right\}.
\end{equation}
Note that $X_0 \subset X_\lambda$ for all $\lambda>0$, thus we obtain
\begin{equation} \label{c_l'}
	0<\rho\leq c_\lambda \leq c_*,\quad \forall \lambda >0,\ \forall\theta>0.
\end{equation}
Since $V\equiv 0$ on $\Omega_V$, arguing as that obtained \eqref{g} we have
\begin{align}\label{g'}
	c_*\leq g\left(d_0\right).
\end{align}
It is worth emphasizing that $g(d_0)$ is now independent of both $\lambda$ and $\theta$.
\begin{proof}[\rm \textbf{Proof of Theorem~\ref{Theo.Superlinear2}}]
Let $\theta_0>0$ be such that 
\begin{equation} \label{theta_0'}
	g(d_0)< \round{\frac{1}{q^+}-\frac{1}{r^-}}\min\{S^{\tau_1},S^{\tau_2}\}\min \{\theta^{-\sigma_1}, \theta^{-\sigma_2} \}, \quad \forall \theta \in (0,\theta_0).
\end{equation}
Let $\theta\in (0,\theta_0)$ be given. From \eqref{c_l'}-\eqref{theta_0'}, it holds that
\begin{equation} \label{Bound.level.c'}
	0<\rho\leq c_\lambda \leq c_*< D_\theta:= \round{\frac{1}{q^+}-\frac{1}{r^-}}\min\{S^{\tau_1},S^{\tau_2}\}\min \{\theta^{-\sigma_1}, \theta^{-\sigma_2} \}, \quad \forall \lambda>0.
\end{equation}
Then, by Lemmas~\ref{Lem.PS1.c} again, for each $\lambda>0$ problem \eqref{eq1'} admits a nontrivial weak solution $u_\lambda$ , which is a critical point of $J$ corresponding to the critical value $c_\lambda$.

Next, let $\lambda_n\to\infty$, and thus we may assume $\lambda_n>1$ for all $n \in \N$. For each $n\in \mathbb{N}$, by denoting $u_n:=u_{\lambda_n}$ we have
\begin{equation}\label{u_n-cn}
J'(u_n)=0~ \text{and} ~ J(u_n)=c_{\lambda_n}.
\end{equation}
Since $c_{\lambda_n}\leq c_*<D_\theta$, up to a subsequence, we have
\begin{equation}\label{PT4.2-c0}
	c_{\lambda_n}\to c_0\leq c_*<D_\theta.
\end{equation}
Thus, $\{u_n\}_{n\in \mathbb{N}}$ is a $\textup{(PS)}_{c_0}$-sequence for $J$ in $X_V$. We claim that $\{u_n\}_{n\in \mathbb{N}}$ is bounded in $X_V$. Indeed, by putting $\alpha=\min\{\ell_1^-,r^-\}$ again, it follows from \eqref{m-n} and \eqref{Bound.level.c'} that
\begin{align*}
D_{\theta} > c_{\lambda_n} = J(u_n)-\frac{1}{\alpha}\scal{J'(u_n),u_n}\geq \left(\frac{1}{q^+}-\frac{1}{\alpha}\right)\left(\|u_n\|^{p^-}_{\lambda_n} -1 \right),\quad \forall n \in \N.
\end{align*}
Thus, there exists $C_\theta>0$ independent of $n$ such that
\begin{equation}\label{ulambda bounded}
||u_n\|\leq \|u_n\|_{\lambda_n} \leq C_\theta, \quad \forall n \in \N.
\end{equation}
Then, by Theorems \ref{Theo.CCP1} and \ref{Theo.CCP.inf} again, we have
\begin{gather}
u_n(x) \to u_0(x) \quad \text{a.a.} \ x \in\mathbb{R}^N,  \label{CC.un.to.u0.aa}\\
u_n \rightharpoonup u_0  \quad \text{in} \  X_V, \label{CC.un.to.u0.w}\\
\mathcal{H}(\cdot,|\nabla u_n|)+ V \mathcal{H}(\cdot,|u_n|) \overset{\ast }{\rightharpoonup }\mu \geq \mathcal{H}(\cdot,|\nabla u_0|)+V \mathcal{H}(\cdot,|u_0|)+ \sum_{i\in I} \mu_i \delta_{x_i} \ \text{in}\  \mathcal{R}(\mathbb{R}^N),     
\label{CC.conv.mu}\\
\mathcal{B}(\cdot,|u_n|)\overset{\ast }{\rightharpoonup }\nu=\mathcal{B}(\cdot,|u_0|) + \sum_{i\in I}\nu_i\delta_{x_i} \ \text{in}\ \mathcal{R}(\mathbb{R}^N),\label{CC.conv.nu}\\
S \min \left \{\nu_i^{\frac{1}{r(x_i)}},\nu_i^{\frac{1}{s(x_i)}}\right\}  \leq \max \left\{\mu_i^{\frac{1}{p(x_i)}},\mu_i^{\frac{1}{q(x_i)}}\right\}, \quad \forall i\in I,
\label{CC.mu.nu}
\end{gather}
and 
\begin{gather}
\limsup_{n \to \infty}\int_{\mathbb{R}^N} \Big[\mathcal{H}(x,|\nabla u_n|)+V(x) \mathcal{H}(x,|u_n|) \Big] \, \diff x = \mu(\mathbb{R}^N)+\mu_\infty, 
\label{CC.mu.inf}\\
\underset{n\to\infty}{\limsup}\int_{\mathbb{R}^N}\mathcal{B}(x,|u_n|)\diff x = \nu(\mathbb{R}^N)+\nu_\infty,
\label{CC.nu.inf}\\
S  \min\left\{\nu_\infty^{\frac{1}{r_\infty}},\nu_\infty^{\frac{1}{s_\infty}}\right\} \leq \max\left\{\mu_\infty^{\frac{1}{p_\infty}},\mu_\infty^{\frac{1}{q_\infty}}\right\} .\label{CC.mu.nu.inf}
\end{gather}
We claim that
\begin{align}\label{u0=0.a.e}
u_0 \in X_0.
\end{align}
Indeed, by Fatou's lemma with taking into account \eqref{m-n}, \eqref{ulambda bounded} and \eqref{CC.un.to.u0.aa}, we obtain
\begin{align*}
0 \leq \int_{\RN} V(x)|u_0|^{p(x)} \diff x & \leq \liminf_{n\to\infty}\int_{\RN} V(x)|u_n|^{p(x)} \diff x = \liminf_{n\to\infty}\frac{1}{\lambda_n}\int_{\RN} \lambda_nV(x)|u_n|^{p(x)} \diff x\\
&  \leq \liminf_{n\to\infty}\frac{1+\|u_n\|^{q^+}_{\lambda_n}}{\lambda_n} =0.
\end{align*}
Thus, $\int_{\RN} V(x)|u_0|^{p(x)} \diff x=0$; hence, $u_0 =0$  a.e. in $\Omega_V^c.$ So, \eqref{u0=0.a.e} has been proved.

Next, we aim to show that
\begin{equation}\label{un.to.u0.XV}
u_n \to u_0 \ \  \text{along a subsequence in} \ \  X_V.
\end{equation}
To this end, we will first prove that
\begin{align}\label{un.to.u0.strong.B1}
u_n \to u_0 \ \  \text{in} \ \  L^{\mathcal{B}}(\RN).
\end{align}
As in the proof of Lemma~\ref{Lem.PS1.c}, this will be done if we can show that $I=\emptyset$ and $\mu_\infty=\nu_\infty=0$.  Suppose, on the contrary, that there exists $i\in I$. For each $\delta>0$, let $\phi_{i,\delta}$ be as in Lemma~\ref{L.un.grad.p}. Note that
\begin{multline*}
\int_{\mathbb{R}^N} \phi_{i,\delta} \Big[\mathcal{H}(x,|\nabla u_n|) + V(x) \mathcal{H}(x,| u_n|) \Big]  \diff x
 \leq \int_{\mathbb{R}^N} \phi_{i,\delta} \Big[\mathcal{H}(x,|\nabla u_n|) + \lambda_nV(x) \mathcal{H}(x,| u_n|) \Big] \diff x\\
= 	\langle J'(u_n) ,\phi_{i,\delta}u_n \rangle - \int_{\mathbb{R}^N} \mathcal{A}(x,\nabla u_n) \cdot \nabla \phi_{i,\delta} u_n \diff x 
+ \int_{\mathbb{R}^N}\phi_{i,\delta} F(x,|u_n|) \diff x + \theta \int_{\mathbb{R}^N}\phi_{i,\delta} \mathcal{B}(x,|u_n|)\diff x.
\end{multline*}
Similar arguments to those leading to \eqref{PL4.3-mn} and \eqref{PL4.3-c} give $\mu_i \leq \theta \nu_i$, and then,
$c_0\geq D_\theta,$ which contradicts \eqref{PT4.2-c0}. So, we have shown that $I=\emptyset$. 

In order to show $\mu_\infty=\nu_\infty=0$, let $\phi_R$ be as in Lemma~\ref{L.un.grad.phiR}. Since 
\begin{align*}
&\int_{\mathbb{R}^N} \phi_R(x)\Big[\mathcal{H}(x,|\nabla u_n|) + V(x) \mathcal{H}(x,| u_n|) \Big]  \diff x \leq \int_{\mathbb{R}^N} \phi_{R}(x) \Big[\mathcal{H}(x,|\nabla u_n|) + \lambda_nV(x) \mathcal{H}(x,| u_n|) \Big] \diff x\\&=\Big\langle J'(u_n) ,\phi_R u_n \Big \rangle +  \int_{\mathbb{R}^N}F(x,|u_n|)\phi_R \diff x+ \theta\int_{\mathbb{R}^N}  \mathcal{B}(x,|u_n|) \phi_R \diff x
- \int_{\mathbb{R}^N}\mathcal{A}(x,\nabla u_n) \cdot \nabla \phi_R u_n  \diff x,
\end{align*}
we argue similarly to those in Lemma \ref{Lem.PS1.c} to obtain $
\mu_\infty=\nu_\infty=0.$ Hence, \eqref{un.to.u0.strong.B1} has been  proved.

Now, we make use of \eqref{un.to.u0.strong.B1} to prove \eqref{un.to.u0.XV}. From Proposition \ref{prop.Holder} and the boundedness of $\{u_n\}$ in $L^{\mathcal{B}}(\RN)$, we have
\begin{align*}
\int_{\RN}& \Big |B(x,u_n)(u_n-u_0) \Big|\, \diff x \\
& \leq \int_{\RN} \Big[c_1(x)|u_n|^{{r(x)-1}} + c_2(x)a(x)^{\frac{s(x)}{q(x)}}|u_n|^{{s(x)-1}}\Big]|u_n-u_0| \diff x\\
& \leq 2\left\|c_1^{\frac{1}{r'}}|u_n|^{r-1}\right\|_{L^{r'(\cdot)}(\RN)}\left\|c_1^{\frac{1}{r}}|u_n-u_0|\right\|_{L^{r(\cdot)}(\RN)} \\
&\ \ \ + 2\left\|\left(c_2a^{\frac{s}{q}}\right)^{\frac{1}{s'}}|u_n|^{s-1}\right\|_{L^{s'(\cdot)}(\RN)}\left\|\left(c_2a^{\frac{s}{q}}\right)^{\frac{1}{s}}|u_n-u_0|\right\|_{L^{s(\cdot)}(\RN)}\\
&\leq C_2\|u_n-u_0\|_{\mathcal{B}},\quad \forall n\in\N.
\end{align*}
Combining this with \eqref{un.to.u0.strong.B1} yields 
\begin{align}\label{B.un}
\lim_{ n \to \infty}\int_{\RN} B(x,u_n)(u_n-u_0) \, \diff x = 0.
\end{align}
We derive from \eqref{XV.comp.emd.LF} and \eqref{CC.un.to.u0.w} that
\begin{align}\label{un-LF}
u_n \to u_0 ~ \text{in} ~ L^{F}(\RN).
\end{align}
Using this and arguing as that leading to \eqref{B.un}, we obtain 
\begin{align}\label{F.un}
\lim_{ n \to \infty}\int_{\RN} f(x,u_n)(u_n-u_0) \, \diff x = 0.
\end{align}
On account of \eqref{u_n-cn}, \eqref{B.un}, and \eqref{F.un}, we infer
\begin{align}\label{An.0}
\lim_{ n \to \infty} \left[\int_{\RN} \mathcal{A}(x,\nabla u_n)\cdot \nabla (u_n-u_0) \diff x + \int_{\RN} \lambda_nV(x)A(x,u_n)(u_n-u_0) \diff x \right]= 0.
\end{align} 
Note that for any $h \in C_+(\RN)$, one has
\begin{equation*}\label{Montone.A}
\left(|\xi|^{h(x)-2}\xi-|\eta|^{h(x)-2}\eta\right)\cdot (\xi-\eta) \geq 0, \quad\forall \xi,\eta \in \mathbb{R}^N,\\ \xi \neq \eta.
\end{equation*}
Using this, we obtain
\begin{align*} 
\notag
0 & \leq \int_{\RN} \Big[\mathcal{A}(x,\nabla u_n)-\mathcal{A}(x,\nabla u_0)\Big]\cdot \nabla (u_n-u_0)\diff x \notag\\
& \hspace{2cm} + \int_{\RN} V(x)\Big[ A(x, u_n)-A(x, u_0)\Big] (u_n-u_0)\diff x \notag \\
& \leq \int_{\RN} \Big[\mathcal{A}(x,\nabla u_n)-\mathcal{A}(x,\nabla u_0)\Big]\cdot \nabla (u_n-u_0)\diff x \notag\\
& \hspace{2cm} + \int_{\RN} \lambda_nV(x)\Big[A(x, u_n)-A(x, u_0)\Big] (u_n-u_0)\diff x.
\end{align*}
Thus,
\begin{align} \label{CC.un.to u0}
	0&	\leq \int_{\RN} \mathcal{A}(x,\nabla u_n)\cdot \nabla (u_n-u_0) \diff x + \int_{\RN} \lambda_nV(x)A(x,u_n)(u_n-u_0) \diff x \notag \\
&\quad - \int_{\RN}  \mathcal{A}(x,\nabla u_0)\cdot \nabla (u_n-u_0) \diff x - \int_{\RN}\lambda_nV(x) A(x,u_0)(u_n-u_0)\diff x.
\end{align}
On account of \eqref{CC.un.to.u0.w}, we infer
\begin{align}\label{An.1}
\lim_{ n \to \infty} \int_{\RN}  \mathcal{A}(x,\nabla u_0)\cdot \nabla (u_n-u_0) \diff x =0.
\end{align}
Since $u_0 = 0$ a.e. in $\Omega_V^c$ and $V=0$ on $\Omega_V$, we have
\begin{align}\label{An.2}
\int_{\RN}\lambda_nV(x) A(x,u_0)(u_n-u_0)\diff x & =\int_{\Omega_V} \lambda_nV(x) A(x,u_0)(u_n-u_0)\diff x=0,\quad \forall n\in\N.
\end{align}
Gathering \eqref{An.0}, \eqref{An.1} and \eqref{An.2}, we derive from \eqref{CC.un.to u0} that
\begin{align*}
\lim_{ n \to \infty}  \bigg\{\int_{\RN} &\Big[\mathcal{A}(x,\nabla u_n)-\mathcal{A}(x,\nabla u_0)\Big]\cdot \nabla (u_n-u_0)\diff x \notag\\
& \hspace{2cm} + \int_{\RN} V(x)\Big[ A(x, u_n)-A(x, u_0)\Big] (u_n-u_0)\diff x\bigg\}=0.
\end{align*}
Utilizing \eqref{An.1} and \eqref{An.2} again, the last limit yields
\begin{align*}
\lim_{ n \to \infty} \int_{\RN} \Big[  \mathcal{A}(x,\nabla u_n)\cdot \nabla (u_n-u_0)\diff x + V(x)A(x, u_n)(u_n-u_0) \Big] \diff x  =0.
\end{align*}
Thus, \eqref{un.to.u0.XV} follows in view of Lemma \ref{Lem.S+}.

Next, we will show that $u_0\big|_{\Omega_V}$ is indeed a weak solution to the limit problem \eqref{eq2}, namely, 
\begin{align}\label{Sol.limit.1}
\int_{\Omega_V} \mathcal{A}(x,\nabla u_0) \cdot \nabla v \diff x =  \int_{\Omega_V} f(x,u_0)v \diff x  
+ \theta \int_{ \Omega_V}B(x,u_0)v\diff x,\quad \forall v\in C_c^\infty(\Omega_V).
\end{align}
To this end, let $v \in C_c^\infty(\Omega_V)$ be arbitrary. We claim that 
\begin{align} \label{cc.1}
\lim_{ n \to \infty}\int_{\Omega_V} \mathcal{A}\left(x,\nabla u_n\right)\cdot \nabla v\,\diff x =  \int_{\Omega_V} \mathcal{A}\left(x,\nabla u_0\right)\cdot \nabla v\,\diff x.
\end{align}
Indeed, by \eqref{un.to.u0.XV} we have
\begin{equation*}\label{un.to.u0.H}
\lim_{n\to\infty}\int_{\RN} \mathcal{H}\left(x,\left|\nabla u_n - \nabla u_0\right|\right) \diff x =0.
\end{equation*}
Therefore, in view of \cite[Theorem 4.9]{Bre.Book} it holds that
\begin{align}\label{grad.un.to.grad.u0}
\nabla u_n \to \nabla u_0 \quad \text{a.e. in}~ \RN
\end{align}
and 
\begin{equation}\label{grad.un.p}
|\nabla u_n|^{p(x)}+a(x)|\nabla u_n|^{q(x)}\leq h(x) \quad\text{for a.a.} \ x\in\R^N\ \text{and all } n\in\N ,
\end{equation}
with some $h\in L^1(\R^N)$. Utilizing these facts, we apply the Lebesgue dominated convergence theorem to obtain
\begin{align}\label{Jn.1}
\lim_{ n \to \infty} \int_{ \Omega_V} \left||\nabla u_n|^{p(x)-2}\nabla u_n - |\nabla u_0|^{p(x)-2}\nabla u_0\right|^{p'(x)}\diff x = 0
\end{align}
and 
\begin{align}\label{Jn.2}
\lim_{ n \to \infty} \int_{ \Omega_V} a(x)\left||\nabla u_n|^{q(x)-2}\nabla u_n - |\nabla u_0|^{q(x)-2}\nabla u_0\right|^{q'(x)}\diff x = 0.
\end{align}
Furthermore, by Proposition~\ref{prop.Holder} we have
\begin{align*}
\int_{\Omega_V} &a(x)\left||\nabla u_n|^{q(x)-2}\nabla u_n - |\nabla u_0|^{q(x)-2}\nabla u_0\right||\nabla v| \diff x\notag \\
& \leq 2 \left\|a^{\frac{1}{q'}}\left||\nabla u_n|^{q-2}\nabla u_n - |\nabla u_0|^{q-2}\nabla u_0\right| \right\|_{L^{q'(\cdot)}(\Omega_V)} \left\|a^{\frac{1}{q}}|\nabla v|\right\|_{L^{q(\cdot)}(\Omega_V)}, \quad \forall n \in \N.
\end{align*}
By Proposition~\ref{prop.nor-mod.D} $(v)$, it follows from the last estimate and \eqref{Jn.2} that
\begin{align*}
\lim_{ n \to \infty}	\int_{\Omega_V} a(x) \left[|\nabla u_n|^{q(x)-2}\nabla u_n - |\nabla u_0|^{q(x)-2}\nabla u_0\right] \cdot \nabla v\, \diff x = 0.
\end{align*}
In the same manner, we obtain
\begin{align*}
\lim_{ n \to \infty}	\int_{\Omega_V} \left[|\nabla u_n|^{p(x)-2}\nabla u_n - |\nabla u_0|^{p(x)-2}\nabla u_0\right] \cdot \nabla v\, \diff x = 0.
\end{align*}
Combining the last two limits yields \eqref{cc.1}. 
Similarly, we obtain
\begin{align}
\lim_{ n \to \infty}\int_{ \Omega_V} B(x,u_n)v \diff x  = \int_{\Omega_V} B(x,u_0)v \diff x \label{I1}
\end{align}
and
\begin{align}
\lim_{ n \to \infty}	\int_{  \Omega_V} f(x,u_n)v \diff x = \int_{  \Omega_V} f(x,u_0)v \diff x. \label{I2} 
\end{align}
Using \eqref{u_n-cn} and the fact that $\operatorname{supp} (v)\subset \Omega_V$, we have
\begin{equation} \label{un-weak-sol-eq1}
\int_{\Omega_V} \mathcal{A}(x,\nabla u_n)  \cdot \nabla v \diff x 
=  \int_{\Omega_V} f(x,u_n)v \diff x+ 
\theta \int_{ \Omega_V}B(x,u_n)v \diff x.
\end{equation}
Passing to the limit as $n \to \infty$  in \eqref{un-weak-sol-eq1}, using \eqref{cc.1}, \eqref{I1} and \eqref{I2}, we derive \eqref{Sol.limit.1}.

Finally, we will show $u_0\ne 0$. To this end, we first note that by \eqref{Xl.emb.B} and \eqref{XV.comp.emd.LF} again we can take $C_3>1$ (independent of $\lambda_n$) such that
\begin{equation}\label{4norms'}
\max\{\|u\|_\mathcal{B},\|u\|_{F}\} \leq C_3 \|u\|\leq C_3 \|u\|_{\lambda_n},\quad \forall u \in X_V,\ \forall n\in\N.
\end{equation}
For each $n \in \mathbb{N}$, it follows from \eqref{u_n-cn} that
\begin{equation}\label{concen.1}
\int_{ \mathbb{R}^N} \mathcal{H}(x,|\nabla u_n|) \diff x +  \int_{ \mathbb{R}^N} \lambda_nV(x) \mathcal{H}(x,|u_n|)\diff x = \int_{ \mathbb{R}^N} F(x,|u_n|)\diff x + \theta \int_{ \mathbb{R}^N} \mathcal{B}(x,|u_n|)\diff x. 
\end{equation}
If $\|u_n\|_{\lambda_n}\leq 1$, then by \eqref{m-n} and \eqref{concen.1} it holds that
\begin{equation*}
\|u_n\|^{q^+}_{\lambda_n} \leq (1+\theta)C_3^{s^+} \|u_n\|^{\ell_1^-}_{\lambda_n}.
\end{equation*}
From this and the fact that $u_n\ne 0$ we obtain that 
\begin{equation}\label{PT4.2-1}
	\|u_n\|_{\lambda_n} \geq \round{(1+\theta)C_3^{s^+}}^{\frac{1}{q^+-\ell_1^-}}.
\end{equation}
If  $\|u_n\|_{\lambda_n}>1$, then \eqref{PT4.2-1} holds automatically.

From the above analysis, we find $C_4\in (0,1)$ independent of $\lambda_n$ such that 
$$\|u_n\|_{\lambda_n}\geq C_4,\quad \forall n\in\N.$$
This and \eqref{concen.1} imply
\begin{align}\label{cc.2}
\int_{ \mathbb{R}^N} F(x,|u_n|)\diff x + \theta\int_{ \mathbb{R}^N} \mathcal{B}(x,|u_n|)\diff x \geq \min \left\{\|u_n\|^{p^-}_{\lambda_n},\|u_n\|^{q^+}_{\lambda_n}\right\} 
\geq C_4^{q^+}. 
\end{align} 
Passing to the limit as $n\to\infty$ in the last estimate, using \eqref{un.to.u0.strong.B1} and \eqref{un-LF} we arrive at
\begin{equation*}
\int_{ \mathbb{R}^N} F(x,|u_0|) \diff x + \theta\int_{ \mathbb{R}^N} \mathcal{B}(x,|u_0|) \diff x
\geq C_4^{q^+}>0.
\end{equation*}
From this we obtain $u_0 \not \equiv 0$. The proof is complete.
\end{proof}

\section*{Acknowledgment}
Ky Ho was supported by the University of Economics Ho Chi Minh City (UEH), Vietnam.

%================REFERENCES=======================================================

\end{document}